\theoremstyle{definition}
\newtheorem*{thm1}{Theorem 1}
\newtheorem*{thm2}{Theorem 2}
\newtheorem{thm}{Theorem}[section]
\newtheorem{cnj}[thm]{Conjecture}
\newtheorem{cor}[thm]{Corollary}
\newtheorem{lem}[thm]{Lemma}
\newtheorem{prop}[thm]{Proposition}
\newtheorem{exm}[thm]{Example}
\newtheorem{rem}[thm]{Remark}
\numberwithin{equation}{section}
\def\refn#1.#2{\expandafter\def\csname#1\endcsname{[#2]}}
\def\refnr#1.{\csname#1\endcsname}
\begin{document}

\baselineskip  1.2pc
\title[invariance of essential normality]{ The biholomorphic  invariance of essential normality on  bounded symmetric domains}
\author[L. Ding]{Lijia Ding}
\address{School of Mathematics and Statistics,
Zhengzhou University,
Zhengzhou,
Henan 450001,
P. R. China}
\email{ljding@zzu.edu.cn}
 \subjclass[2020]{ Primary 46H25; Secondary 32B15; 32M15; 47A13. }
\keywords{Hilbert module; Essential normality; 
Corona problem; Hyperrigidity; Symmetric domain. }

\maketitle
\begin{abstract}

This paper mainly concerns the biholomorphic invariance of $p$-essential normality of  Hilbert modules on bounded symmetric domains. By establishing new integral formulas concerning rational function kernels for the Taylor functional calculus, we prove a biholomorphic invariance result related to the $p$-essential normality.  Furthermore, for quotient analytic Hilbert submodules determined by analytic varieties, we develop an algebraic approach to proving that the $p$-essential normality is preserved invariant if the coordinate multipliers are replaced by arbitrary automorphism multipliers. Moreover, the Taylor spectrum of the compression tuple is calculated under a mild condition, which gives a solvability result of the corona problem for quotient submodules. As applications, we extend the recent results on the equivalence between  $\infty$-essential normality and hyperrigidity.

\end{abstract}
\maketitle

\section{Introduction}

The study of $p$-essentially normal Hilbert modules originates from Arveson's seminal works  \cite{Arv98, Arv00, Arv05} in which it is applied to investigate the dilation theory and the geometric invariant theory of the commuting operator tuple. Recently, the $p$-essential normality of Hilbert modules on some bounded pseudoconvex domains has continuously attracted more attention,  for example in the papers \cite{Arv07, DTY16, EnE15,  GWZ23, GuW20, KS12, KS16}.  The results reveal the profound interplay among complex algebraic geometry, index theory, operator algebra and operator theory.   

The purpose of this paper is to establish the biholomorphic invariance of $p$-essential normality of  Hilbert modules 
on bounded symmetric domains, 
 and gives some intrinsic characterizations of the $p$-essential normality.  Bounded symmetric domains are non-compact Hermitian symmetric spaces that are natural generalizations of the usual unit ball and play a role in various branches of mathematics, such as arithmetic algebraic geometry, differential geometry, and Lie theory.   The bounded symmetric domain is assumed to be irreducible in this paper unless otherwise stated.   It is known that the Hardy and weighted  Bergman modules are not  $\infty$-essentially normal in the higher rank case due to the foliation structure for the Toeplitz $C^\ast$-algebra \cite{Upm84}, but there indeed exist plentiful nontrivial quotient submodules that are determined by algebraic varieties are $p$-essentially normal for sufficiently large $p;$ see example for \cite{GWZ23,UpW16, Zha21}.  

  Salinas proved in \cite[Corollary 3.2]{Sal89} that $\infty$-essential normality of Bergman modules on bounded domains is preserved under proper holomorphic maps,  provided the maps can be continuously extended to the boundaries.  Since proper holomorphic self-maps on irreducible bounded symmetric domains with higher dimensions must be automorphisms,  it follows that $\infty$-essential normality of  Bergman modules is preserved invariant if the coordinate multipliers are replaced by arbitrary automorphism multipliers. 
  Recently,  Kennedy and Shalit showed in  \cite[Theorem 2.1]{KS12} that the $p$-essential normality preserves invariant under a generalized invertible linear transformation for quotient Drury-Arveson submodules,  provided the submodules are determined by some homogeneous algebraic varieties in the unit ball. 
On the other hand,   the methodology used in \cite{EnE15} enlightens us to develop the $p$-essentially normal theory of Hilbert modules on some complex manifolds.   However, once one attempts to consider the $p$-essentially normal theory on complex manifolds, one has to handle the problem that the $p$-essential normality is independent of the choice of holomorphic coordinate charts.   These observations motivate us to investigate the invariance of $p$-essential normality for more general quotient submodules under biholomorphisms on bounded symmetric domains. 
We shall consider the invariance problem in a more general context of submodules and quotient submodules which are determined by analytic varieties.

To express our main results clearly, we introduce some terminologies and notations.  Let $H$ be a  Hilbert module over the complex polynomial algebra $\mathbb{C}[z]=\mathbb{C}[z_1,\cdots,z_n]$ in $n$ variables and  $T=(T_1,\cdots, T_n)$ be the commuting operator $n$-tuple of the respective coordinate operators,  which is denoted by $(H, T)$ in this paper. A closed subspace $M\subset H$ is called a (Hilbert) submodule if $TM\subset M,$ which is also denoted by $(M, T).$
 The orthogonal complement $M^\bot$ is a submodule  of $(H,S)$, 
where  $S=(P_{M^\bot}T_1,\cdots,P_{M^\bot}T_n)$   is the compression $n$-tuple of  $T.$ Usually, $(M^\bot , S)$ is called a quotient  submodule of $(H,T).$
 A  submodule $(M,T)$ of $(H,T)$ is said to be $p$-essentially normal  if all cross-commutators
\begin{equation}\label{com} [T_i|_M,(T_j|_M)^*]= T_i|_M\hspace{0.1em}(T_j|_M)^*-(T_j|_M)^*\hspace{0.1em}T_i|_M\notag \end{equation} belong to the Schatten $p$-class  $\mathcal{L}^p(M),p\in [1,\infty]$ for all  $i,j=1,\cdots,n,$ where $T_i|_M$ denotes the restriction of $T_i$ to $M$ and $\mathcal{L}^\infty(M)$ is interpreted as the compact operator ideal. Similarly, one can define the $p$-essentially normal quotient submodules.

 Our first main result involves the Taylor functional calculus. Let $\Omega\subset \mathbb{C}^n $ be a bounded symmetric domain. 
  Denote by  $\widehat{S}$ an arbitrary permissive linear transformation for the triple  $(\Omega, M^\bot, S)$  and by $\phi(\widehat{S})$ the Taylor functional calculus for $\widehat{S}.$  A linear transformation for the $n$-tuple $S$ is called permissive if its Taylor spectrum 
  is contained in the closure $\bar{\Omega};$ see (\ref{pers}) for the explicit definition.

\begin{thm1}\label{th1} Suppose $\phi$ is an automorphism on $\Omega.$  Let $(M, T)$ be a submodule of a  Hilbert module $(H, T),$  then the following are equivalent:
\begin{enumerate}
\item $(M^\bot,S)$ is $p$-essentially normal.
\item $(M^\bot,\widehat{S})$ is $p$-essentially normal.
\item $(M^\bot,\phi(\widehat{S}))$ is $p$-essentially normal.
\end{enumerate}
  \end{thm1}
In particular, if the Taylor spectrum $Sp(S)\subset\bar{\Omega}$ then $(M^\bot,S)$ is $p$-essentially normal if and only if $(M^\bot,\phi({S}))$ is $p$-essentially normal. It can be checked that quotient submodules $(M^\bot,\widehat{S})$  and $(M^\bot,\phi(\widehat{S}))$ over the same polynomial algebra are not $2p$-isomorphic in the sense of Arveson in general, hence Theorem 1 differs from the invariance result \cite[Corollary 4.5]{Arv07}.

 Our second main result deals with the biholomorphic invariance of $p$-essential normality of analytic Hilbert modules.     Throughout this paper, we use $(\Omega,w),(\Omega,z)$ to indicate that the bounded symmetric domain $\Omega$ is assigned different coordinates $w,z,$ respectively. 
 Let $(M, T_z)$ be a  weighted Bergman submodule determined by an analytic variety in $(\Omega,z),$  equipped with  $T_z=(M_{z_1},\cdots, M_{z_n}),$ which is the usual coordinate function multipliers. Its quotient Bergman submodule is denoted by  $(M^\bot, S_z)$  where  $S_z$ is the compression of $T_z.$ Clearly, $S_z$ coincides with the truncated Toeplitz operator $n$-tuple  with the symbol of  coordinate functions.
  For any automorphism  $\phi=(\phi_1,\cdots,\phi_n): \Omega\rightarrow \Omega,$   we denote  by
 $\phi^*(M^\bot), S_\phi=(S_{\phi_1},\cdots, S_{\phi_n})$  the pullback of $M^\bot$ under $\phi,$ the truncated Toeplitz operator $n$-tuple with the symbol of the component functions of $\phi,$  respectively. It is known from Section \ref{seon}  that  $\phi^*(M^\bot)=(\phi^*(M))^\bot.$

 \begin{thm2}\label{the} Let    $M$ be a weighted Bergman submodule determined by an analytic variety in $\Omega$.  Suppose that $\phi=(\phi_1,\cdots,\phi_n): (\Omega,w)\rightarrow (\Omega,z) $ is an automorphism with $z=\phi(w)$, then the following hold:
 \begin{enumerate}
\item The $\mathbb{C}[w]$-module $(\phi^*(M^\bot),S_w)$ is $p$-essentially normal if and only if  the $\mathbb{C}[z]$-module  $(M^\bot,S_z)$  is $p$-essentially normal.
\item    The following two statements are equivalent.
\begin{enumerate}
\item $[S_{w_i},S_{w_j}^*]\in \mathcal{L}^p(M^\bot)$ for all $i,j=1,\cdots,n.$
\item $[S_{\phi_i},S_{\phi_j}^*]\in\mathcal{L}^p(M^\bot)$ for all $i,j=1,\cdots,n.$
\end{enumerate}
\end{enumerate}
\end{thm2}

Theorem  2 implies that the $p$-essential normality of a quotient weighted Bergman submodule 
is preserved invariant if the coordinate multipliers are replaced by arbitrary automorphism multipliers. Theorem  2  gives an intrinsic characterization of the $p$-essential normality of Hilbert modules, namely the $p$-essential normality is independent of the choice of the holomorphic coordinate charts.     This ensures basically that one can develop the theory of $p$-essential normality on complex manifolds once there is an appropriate definition of Hilbert modules; further discussion on this issue will appear elsewhere. 
Proposition \ref{MS} enables us to endow the $\mathbb{C}[w]$-module $(\phi^*(M^\bot),S_\phi)$ with a natural $\mathbb{C}[z]$-module structure which is unitarily equivalent to  the $\mathbb{C}[z]$-module  $(M^\bot,S_z)$. However,  the $\mathbb{C}[w]$-module $(\phi^*(M^\bot),S_w)$ does not possess this property in general, namely the change of holomorphic coordinates does not give rise to the unitary isomorphism of quotient submodules. 
With extra effort, similar results on Hilbert modules corresponding to the continuous part of the Wallach set are also obtained, particularly Hardy and Drury-Arveson modules; see Theorem \ref{hwaa}.  


 Moreover, we consider the  Taylor spectrum of compression operator tuple on the quotient submodule. 
 We calculate the Taylor spectrum 
 when the submodule is finitely generated and is determined by an analytic variety in an open neighborhood of $\bar{\Omega};$  see the Proposition \ref{vspo}. 
 As an application,  we prove the solvability of the related corona problem of the quotient  Drury-Arveson submodule for a multiplier subalgebra; 
 see Proposition \ref{corona1}. 
 When $\Omega=\mathbb{B}^n$ the solvability result provides supporting evidence for the conjecture proposed by Davidson, Ramsey, and Shalit  \cite[Remarks 5.5]{DRS15}. 

Finally, we present two direct applications of our invariance results.  Recently,  both the Arveson-Douglas  conjecture and  Arveson's hyperrigidity conjecture have attracted considerable attention; see \cite{CT21, GuW20, KS16} and references therein. In \cite{KS16},   Kennedy and Shalit established a deep connection 
between hyperrigidity for an operator tuple and $\infty$-essential normality, which in fact connects the above two conjectures.  First we provide an equivalent description of the Geometric Arveson-Douglas  conjecture; see Conjecture \ref{cnj1}. As the second application,
we extend the Kennedy-Shalit theorem  \cite[Theorem 4.12]{KS16} to a broader range of compression tuples and quotient submodules, which highlights the equivalence between hyperrigidity and $\infty$-essential normality; see Theorem \ref{snhh}.  

The paper is organized as follows. Section 2 reviews some of the standard facts about analytic varieties and bounded symmetric domains in terms of the Jordan triple Systems.  In Section 3, we give the proof of Theorem 1.  Section 4 is devoted to proving Theorem 2. Section 5 discusses the case of Hilbert modules corresponding to the continuous part of the Wallach set.  In Section 6, we calculate the  Taylor spectrum of compression operator tuple on the quotient submodule, 
and consider the related corona problem for the quotient submodule.  Section 7 contains two direct applications. 

 \section{preliminaries}\label{prel}

 In this section, we will recall some known facts that will be used later on.  
 Let $D\subset \mathbb{C}^n$ be a domain. 
 A nonempty closed subset $A\subset D$ is said to be an analytic variety, if for every $z_0\in A,$ there exists a neighborhood $U$ of $z_0$ and holomorphic functions $f_1,\cdots,f_m\in \mathcal{O}(U)$ such that
 $$A\cap U=\{z\in U: f_1(z)=\cdots=f_m(z)=0\}.$$
Then $f_1,\cdots,f_m$ are said to be local equations of $A$ in $U.$ The definition of the analytic variety is local. A nontrivial fact is that every analytic variety can be globally defined by finitely many holomorphic functions on Stein manifolds \cite[Theorem 5.14]{GF76}, especially on pseudoconvex domains in $\mathbb{C}^n.$ 
As we shall mention below,  bounded symmetric domains are always pseudoconvex, so in the following, we need only consider the globally defined analytic variety in bounded symmetric domains.  It is easy to check that if $f:\Omega_1\rightarrow\Omega_2$ is a holomorphic map between two domains and $A\subset \Omega_2 $ is an analytic variety, so is its preimage $f^{-1}(A)\subset\Omega_1.$

We now briefly recall some known facts about bounded symmetric domains and their Jordan theoretic description; we refer the reader to \cite{Chu21, Din22, Loo77, Upm96}  for general background. 
A bounded symmetric domain is said to be irreducible if it can not be written as a Cartesian product of bounded symmetric domains of lower dimensions up to biholomorphisms. There only exist six types of irreducible bounded symmetric domains up to biholomorphisms due to Cartan, and the others are the  Cartesian product of irreducible bounded symmetric domains up to biholomorphisms. Let $G$ be the identity component of the automorphism group $\text{Aut}(\Omega)$ of $\Omega.$ Then $\Omega = G/K$ is a realization of  Hermitian symmetric space of the non-compact type,   where $K = \{g \in G:  g (\bm{0})=\bm{0}\}$ is the isotropy subgroup of $\bm{0}\in \Omega$.  It is well-known \cite{Loo77, Upm96} that $Z$ can be equipped with a Hermitian Jordan triple structure, and then $\Omega$  can biholomorphically be realized as the spectral unit ball in $Z,$ thus we identify  $\Omega$ with the unit ball in the spectral norm. 
The spectral unit ball in $\mathbb{C}^n$ is a bounded balanced convex domain, especially a pseudoconvex domain. Note that any two norms of the finite-dimensional complex vector space $\mathbb{C}^n$ are equivalent, thus the topologies induced by both the spectral norm and the usual  Euclidean norm coincide on $\mathbb{C}^n.$ Since $\Omega$ is circular, one can show that the isotropy subgroup $K=\{ g\in GL(\mathbb{C}^n): g(\Omega)=\Omega\}.$ We denote the Jordan triple product by $$Z\times Z\times Z\rightarrow Z, \quad (u,v,w) \mapsto \{uv^*w\}$$ which is complex linear in $u,w$ and complex  conjugate   linear in $ v.$  
For each pair $(u,v) \in Z \times Z$ the linear endomorphism
$$ B(u,v)w := w - 2\{uv^*w\} + \{u\{vw^*v\}^*u\}$$
is called the Bergman endomorphism associated with $(u,v)$. One can show that $B(z,\xi) \in GL(Z)$ is invertible whenever $z,\xi\in\Omega.$
 A pair $(z,\xi)\in Z\times Z$ is said to be quasi-invertible if the Bergman endomorphism $B(z,\xi)$ is invertible,  in this case $$z^{\xi}:=B(z,\xi)^{-1}(z-\{z\xi^*z\})$$ is called the quasi-inverse of the pair $(z,\xi);$ see \cite{Loo77,UpW16} for details.  Now fix $z_0\in \Omega,$  we can show that
$$g_{z_0}(w)=z_0+B(z_0,z_0)^{\frac{1}{2}}w^{-z_0}$$ defines an automorphism $g_{z_0}\in\text{Aut}(\Omega)$ called the M\"obius transformation associated with $z_0.$ It can be checked that 
$g_{z_0}$  satisfies $$g_{z_0}(\bm{0})=z_0, g_{z_0}(-z_0)=\bm{0},$$
and its inverse is $g_{-z_0}.$


It is well-known that there exists a unique generic polynomial $\Delta(z,w)$ in $z,\bar{w}$ and a numerical invariant $N$ satisfying the property that the Bergman kernel of  $\Omega$ is given by \begin{equation}\label{ker} K_N(z,w)=\Delta(z,w)^{-N},\end{equation} where the numerical invariant  $N$ is also called the genus of domain $\Omega,$ and the generic polynomial $\Delta(z,w)$ is also called Jordan triple determinant such that $\Delta(0,0)=1$. The $K$-invariant probability measure $dv_\gamma$ on  $\Omega$ is given by $$dv_\gamma(w)=c_\gamma \Delta(w,w)^{\gamma} dv(w)$$
 for $\gamma>-1,$  where $c_\gamma$ is the normalized  constant and  $K$  is the isotropic subgroup as above.  Let $A^2(dv_\gamma)$  denote the weighted Bergman space consisting of square-integrable holomorphic functions with respect to the measure $dv_\gamma$ on $\Omega,$  which is occasionally denoted by $A^2(\Omega,dv_\gamma).$ The Bergman kernel $K_\gamma(z,w)$ of the spectral unit ball is given by
$$ K_{N+\gamma}(z,w)=\Delta(z,w)^{-(N+\gamma)},$$
which is degenerated to the formula (\ref{ker})  when $\gamma=0.$   The weighted Bergman space $A^2(dv_\gamma)$ is a reproducing kernel Hilbert function space, since  \begin{equation}\label{repf} f(z)=\langle f, K_{N+\gamma,z}\rangle_\gamma=\int_{\Omega}f(w)\overline{K_{N+\gamma,z}(w)}dv_\gamma(w),  \quad   z \in \Omega\end{equation}
for every $f\in A^2(dv_\gamma),$ where $$K_{N+\gamma,w}(z)= K_{N+\gamma}(z,w)=\Delta(z,w)^{-(N+\gamma)}, \quad   z,w \in \Omega,$$  is also called the reproducing kernel of $A^2(dv_\gamma).$

\begin{exm}
 Type $\text{\uppercase\expandafter{\romannumeral1}}_{r\times n}.$  Let $Z=\mathbb{C}^{r\times n}$ be the complex  $(r\times n)$-matrix space with  $r\leq n.$ Then
$$\text{\uppercase\expandafter{\romannumeral1}}_{r\times n}=\{u\in Z: \text{Id}_r-uu^*>0\}$$ is an irreducible bounded symmetric domain, where $\text{Id}_r$  is the $(r\times r)$-unit matrix and $u^*$ is the conjugate matrix of $u.$ The associated Jordan triple product on $Z$ is the generalized anti-commutator product $$\{uv^*w\}:=\frac{1}{2}(uv^*w+wv^*u)$$ for $u,v,w\in Z,$ where the multiplication in the right side is the usual matrix multiplication. In this case, the Bergman endomorphism associated with $(u,v)$ is given by
$$B(u,v)w=(\text{Id}_r-uv^*)w(\text{Id}_n-v^*u)$$  and the  quasi-inverse of the pair $(z,\xi)$ is $$z^{\xi}=(\text{Id}_r-z\xi^*)^{-1}z.$$ It follows that the corresponding generic polynomial  $\Delta(u,v)$ is $$\Delta(u,v)=\text{Det}(\text{Id}_r-uv^*).$$
The case of $r=1$ is the unit ball $\text{\uppercase\expandafter{\romannumeral1}}_{1\times n}=\mathbb{B}^n\subset \mathbb{C}^{1\times n}=\mathbb{C}^n$ in the  Euclidean norm, in this case, $\Delta(u,v)=1-\langle u,v\rangle,$ where $\langle \cdot,\cdot\rangle$ is the standard Hermitian inner product on $\mathbb{C}^n.$
\end{exm}

From \cite{Upm96}, we know that  the natural action of $K$ on $\mathcal{P}(Z)=\mathbb{C}[z]$ induces the Peter-Schmid-Weyl decomposition
$$ \mathcal{P}(Z)=\sum_{\bm{m}\geq0}\mathcal{P}_{\bm{m}}(Z),$$
 where $\bm{m}=( m_1,\cdots , m_r)\geq0$
runs over all integer partitions, namely $$ m_1 \geq\cdots \geq m_r\geq 0.$$ The decomposition is irreducible under the action of $K$ and is orthogonal under the  Fischer-Fock (or Segal-Bargmann) inner product $\langle \cdot,\cdot\rangle_F;$ see \cite[Section 2.7]{Upm96}. For a $r$-tuple $\bm{s}=(s_1,\cdots,s_r)\in \mathbb{C}^r,$ the Gindikin Gamma function \cite{Din22,Upm96} is given by $$\Gamma_\Omega(\bm{s})=(2\pi)^{\frac{ar(r-1)}{4}}\prod_{j=1}^r\Gamma(s_j-\frac{a}{2}(j-1)),$$
of the usual Gamma function $\Gamma$ whenever the right side is well defined, where $r$ is the rank of $\Omega$ and $a,b$ are two numerical invariants (nonnegative integers) associated with the joint Peirce decomposition for a chosen frame $e_1,\cdots,e_r$ of minimal tripotents such that the dimension count
\begin{equation}\label{cont}n=r+\frac{a}{2}r(r-1)+br\end{equation} holds and the genus $N$ is given by $$N:=2+a(r-1)+b.$$ The multi-variable Pochhammer symbol is
$$(\lambda)_{\bm{s}}:=\frac{\Gamma_\Omega(\lambda+\bm{s})}{\Gamma_\Omega(\bm{s})},$$ where $\lambda+\bm{s}:=(\lambda+s_1,\cdots,\lambda+s_r).$
And the $G$-orbit $$\mathcal{S}:=G\cdot(e_1+\cdots +e_r)$$  is called the Shilov boundary of  $\Omega,$ which is a compact analytic manifold contained in the topological boundary $\partial \Omega.$ In general the topological boundary  $\partial \Omega$ is not smooth. 
It can be verified that \begin{equation}\label{pola} (\lambda)_{\bm{s}}= \prod_{j=1}^r(\lambda-\frac{a}{2}(j-1))_{s_j}\end{equation} of the usual Pochhammer symbols $(\mu)_m=\prod_{j=1}^m(\mu+j-1).$
It is known from \cite{AZ03,Upm96} that
\begin{equation}\label{dkl} \Delta(z,w)^{-\lambda}=\sum_{\bm{m}\geq0}(\lambda)_{\bm{m}} K_{\bm{m}}(z,w)\end{equation}
converges compactly and absolutely on  $ \Omega\times \Omega,$ where $K_{\bm{m}}$ is the reproducing kernel of $\mathcal{P}_{\bm{m}}(Z)$ in the Fischer-Fock inner product, for all $\lambda\in\mathbb{C}.$
 The Wallach set $W_\Omega$ with respect to $\Omega$  is defined to be the set consists of all $\lambda\in \mathbb{C}$ satisfying  $$(\lambda)_{\bm{m}}\geq0$$ for all  integer partitions $\bm{m}\geq0,$ which is exactly the set of value $\lambda$ such that   $\Delta(z,w)^{-\lambda}$ is a positive kernel. The Wallach set $W_\Omega$ admits
 the following decomposition $$W_\Omega=W_{\Omega,d}\cup W_{\Omega,c}$$
 where $W_{\Omega,d}=\{\lambda=(j-1)\frac{a}{2},j=1,\cdots,r\}$ and  $W_{\Omega,c}=\{\lambda>(r-1)\frac{a}{2}\}.$

 The set $W_{\Omega,c}$ is called the continuous part, in this  case, the function $\Delta(z,w)^{-\lambda}$ is the reproducing kernel of the Hilbert holomorphic function space
 $$H_{\lambda}^2(\Omega)=\sum_{\bm{m}\geq0} \mathcal{P}_{\bm{m}}(Z),$$
whose inner product is induced by \begin{equation}\label{pqin}(p,q)_\lambda=\frac{1}{(\lambda)_{\bm{m}}}\langle p,q\rangle_F\end{equation}for all $p,q\in  \mathcal{P}_{\bm{m}}(Z),\bm{m}\geq0.$ The weighted Bergman space $A^2(dv_\gamma)$ coincides with $H^2_{N+\gamma}(\Omega),\gamma>-1$ and the classical Hardy space  $H^2(\Omega)$ defined in \cite[Definition 2.8.4]{Upm96} coincides with $H^2_{\frac{n}{r}}(\Omega).$ When $\Omega=\mathbb{B}^n$  we know that  $H^2_1(\mathbb{B}^n)$ coincides with the Drury-Arveson space defined in \cite{Arv98}. Thus in general we define the Drury-Arveson space to be $H^2_{(r-1)\frac{a}{2}+1}(\Omega).$
 The set $W_{\Omega,d}$ is called the discrete part, in the case of $\lambda=(j-1)\frac{a}{2},$  the function $\Delta(z,w)^{-\lambda}$ is the reproducing kernel of the Hilbert holomorphic function space
 $$H_{\lambda}^2(\Omega)=\sum_{\bm{m}\geq0,m_j=0} \mathcal{P}_{\bm{m}}(Z),$$
 whose inner product is defined similarly to (\ref{pqin}).
The Drury-Arveson space $H^2_1(\mathbb{B}^n)$  on  $\mathbb{B}^n$ is complete Nevanlinna-Pick \cite{DRS15}; actually, the reproducing kernel Hilbert holomorphic function space $H_{\lambda}^2(\mathbb{B}^n)$  is complete Nevanlinna-Pick if and only if $0<\lambda\leq1.$ On the contrary, in the higher rank case of $r\geq2,$  one can show that $H_{\lambda}^2(\Omega)$ is not complete Nevanlinna-Pick for every $\lambda\in W_{\Omega},$ by the Agler-McCarthy characterization of complete Nevanlinna-Pick kernels.

 \section{The proof of Theorem 1}\label{Sec}
In this section, we will prove Theorem 1.  
For the special case $p=\infty,$ the proof is relatively simple since the compact operator ideal is norm-closed; see details in \cite{Sal89}. However, the Schatten $p$-class ideal has no such a property for general $p\in (0,\infty].$ Our proof for the general case is inspired by Connes'  approach \cite{Con85} to dealing with the Dunford-Riesz functional calculus of Schatten class commutators. 
Nevertheless, the Taylor functional calculus for a commuting operator tuple is originally proved by means of the Cauchy-Weil formula for cohomology class and the formula is difficult to apply in practice. To handle these difficulties we first establish new integral formulas with the rational function (Bergman and Szeg\"o) kernels for Taylor functional calculus by the holomorphic function space theory.

The following two operator identities are very elementary and can be immediately verified by direct calculations.
\begin{lem}\label{age} Let $U, V, W$ be bounded operators on a Banach space, then the following hold:
\begin{enumerate}
\item  $ [UV,W]=U[V,W]+[U,W]V.$
\item If $U$ is invertible, then $$[U^{-m},V]=U^{-m}[V,U^m]U^{-m}=U^{-(m+1)}[U,V]-U^{-(m+1)}[U^{m+1},V]U^{-m},$$
for every integer $m\geq0.$
\end{enumerate}
\end{lem}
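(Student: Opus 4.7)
The plan is essentially computational, since both identities are purely algebraic statements about associative composition of bounded operators, and the Banach space structure plays no role beyond making the products well-defined.

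For part (1) I would simply expand both sides and use the trick of inserting a cancelling middle term: $[UV,W]=UVW-WUV = UVW - UWV + UWV - WUV = U(VW-WV) + (UW-WU)V$, which is exactly $U[V,W]+[U,W]V$. This is the standard derivation property of the commutator, often written $\mathrm{ad}_W(UV)=U\,\mathrm{ad}_W(V)+\mathrm{ad}_W(U)V$.

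For the first equality in part (2), I would compute $U^{-m}[V,U^m]U^{-m}=U^{-m}(VU^m-U^mV)U^{-m}$ and use invertibility of $U$ (hence of $U^m$, with $U^{-m}U^m=U^mU^{-m}=I$) to collapse this to $U^{-m}V - VU^{-m}=[U^{-m},V]$. For the second equality I would expand the two terms on the right separately: the identity in part (1) (or a direct expansion) gives $U^{-(m+1)}[U,V] = U^{-m}V - U^{-(m+1)}VU$, while $U^{-(m+1)}[U^{m+1},V]U^{-m} = U^{-(m+1)}(U^{m+1}V - VU^{m+1})U^{-m} = VU^{-m} - U^{-(m+1)}VU$. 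Subtracting, the terms $U^{-(m+1)}VU$ cancel and what remains is $U^{-m}V - VU^{-m} = [U^{-m},V]$.

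There is really no obstacle here — the whole lemma is a pair of bookkeeping identities intended for later use (presumably to move Schatten-class commutators through products and inverses). The only small points requiring care are that one must invoke the invertibility of $U$ to cancel $U^{-k}U^k=I$ on both sides, and one should note that the case $m=0$ is degenerate but consistent (both sides vanish). I would write the proof in two short displayed computations, one per item, without further commentary.
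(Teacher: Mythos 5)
Your computations are correct and this is exactly the approach the paper takes; the paper simply states that the identities "can be immediately verified by direct calculations" without writing them out. Your expansions, including the middle-term insertion in (1) and the cancellation of the $U^{-(m+1)}VU$ terms in (2), are the direct calculations intended.
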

For a holomorphic polynomial $q\in \mathbb{C}[z],$ we define $S_q$ by \begin{equation}\label{sps} S_q:=q(S_1,\cdots,S_n)=P_{M^\bot}q(T_1,\cdots,T_n).\end{equation}
For quotient submodule $(M^\bot,S)$ of $(H,T)$, it is evident that $S_{z_i}=S_i,i=1,\cdots,n.$
\begin{prop}\label{Sij} The following are  equivalent:
\begin{enumerate}
\item $[S_{i},S_{j}^*]\in \mathcal{L}^p(M^\bot)$ for all $i,j=1,\cdots,n.$
\item $[S_{z_i},S_{z_j}^*]\in \mathcal{L}^p(M^\bot)$ for all $i,j=1,\cdots,n.$
\item $[S_{h},S_{q}^*]\in\mathcal{L}^p(M^\bot)$ for all $h,q\in \mathbb{C}[z].$
\end{enumerate}
\end{prop}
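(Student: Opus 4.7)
The plan is to deduce this from Lemma \ref{age}(1) by a standard two-stage bootstrap, using the fact that $\mathcal{L}^p$ is a two-sided ideal in $B(M^\bot)$. The direction $(2)\Rightarrow(1)$ is immediate by specializing to $h=z_i$ and $q=z_j$. For $(1)\Rightarrow(2)$, the first thing I would record is that the map
\[
\mathbb{C}[\bm z]\to B(M^\bot),\qquad q\mapsto S_q:=P_{M^\bot}q(T_1,\ldots,T_n),
\]
is a unital \emph{algebra homomorphism}. This is forced by the observation made just after (\ref{qht}) that $P_{M^\bot}T_iP_{M^\bot}T_j=P_{M^\bot}T_iT_j$, which gives $S_hS_q=S_{hq}$ for all polynomials $h,q$. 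In particular, $\{S_h:h\in\mathbb{C}[\bm z]\}$ coincides with the unital commutative subalgebra of $B(M^\bot)$ generated by $S_1,\ldots,S_n$.

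For the first stage, fix $j\in\{1,\ldots,n\}$ and introduce the left normalizer
\[
\mathcal{A}_j:=\{X\in B(M^\bot):[X,S_j^*]\in\mathcal{L}^p\}.
\]
If $X,Y\in\mathcal{A}_j$, then by Lemma \ref{age}(1) we have $[XY,S_j^*]=X[Y,S_j^*]+[X,S_j^*]Y$, and both summands lie in $\mathcal{L}^p$ because $\mathcal{L}^p$ is two-sided. Thus each $\mathcal{A}_j$ is a subalgebra of $B(M^\bot)$. Hypothesis (1) says $S_1,\ldots,S_n\in\mathcal{A}_j$, so $\mathcal{A}_j$ contains the entire unital subalgebra generated by $S_1,\ldots,S_n$; by the first paragraph this gives $[S_h,S_j^*]\in\mathcal{L}^p$ for every $h\in\mathbb{C}[\bm z]$ and every $j$.

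For the second stage, fix $h\in\mathbb{C}[\bm z]$ and introduce the right normalizer
\[
\mathcal{B}_h:=\{Y\in B(M^\bot):[S_h,Y^*]\in\mathcal{L}^p\}.
\]
Using $(Y_1Y_2)^*=Y_2^*Y_1^*$ and Lemma \ref{age}(1) applied to the commutator $[S_h,Y_2^*Y_1^*]=Y_2^*[S_h,Y_1^*]+[S_h,Y_2^*]Y_1^*$, together with the ideal property of $\mathcal{L}^p$, we see that $\mathcal{B}_h$ is again a subalgebra of $B(M^\bot)$. By the conclusion of the first stage, $S_1,\ldots,S_n\in\mathcal{B}_h$, so $\mathcal{B}_h$ contains every $S_q$, which is precisely (2). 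There is no serious obstacle in this argument; it is essentially a derivation/Leibniz bookkeeping, and the only point requiring any care is the reversal of order in $(Y_1Y_2)^*=Y_2^*Y_1^*$ when verifying that $\mathcal{B}_h$ is a subalgebra.
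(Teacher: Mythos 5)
Your proof is correct and follows the same approach as the paper, which simply cites Lemma~\ref{age}(1) and mathematical induction; your normalizer-subalgebra formulation is a clean way of packaging exactly that Leibniz-rule induction. The one point worth noting is that you invoke Lemma~\ref{age}(1) in the form $[U,VW]=V[U,W]+[U,V]W$ (obtained by antisymmetry of the commutator), which is harmless but slightly different from the stated form $[UV,W]=U[V,W]+[U,W]V$.
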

\begin{proof} This  follows from  Lemma \ref{age}\hspace{0.1em}(1) and  induction on the degree of  polynomials.
\end{proof}

For a bounded commuting operator $n$-tuple $T=(T_1,\cdots, T_n)$ on a Hilbert space $H,$ we denote by $Sp(T)$  its Taylor spectrum which is a nonempty compact subset of the closed polydisc $\Delta_T=\{z\in \mathbb{C}^n:\vert z_i\vert\leq r( T_i ),i=1,\cdots,n\},$ where $r(T_i)$ 
 is the spectral radius of $T_i.$ 
Let $B(H)$ denote the set of bounded linear operators and $\mathcal{O}(Sp(T))$ denote the set of holomorphic functions on some neighborhoods of  the compact set $Sp(T),$ then there exists a unique continuous algebraic homomorphism \begin{equation}\begin{split}\label{}\mathcal{O}(Sp(T))&\rightarrow (T)''\subset B(H), \notag\\
f&\longmapsto f(T),
\end{split}\end{equation} which satisfies
$$\bm{1}(T)=\text{Id}, z_i(T)=T_i,i=1,\cdots,n,$$ and  the following spectral mapping theorem, where $(T)''$ is the bicommutant algebra of   $T.$ This is known as the  Taylor functional calculus \cite{Tay70A}. Suppose $f=(f_1,\cdots,f_m):\Omega\rightarrow\mathbb{C}^m$ is a holomorphic map and $Sp(T)\subset \Omega.$  Let $f(T)=(f_1(T),\cdots,f_m(T)),$  then the spectral mapping theorem holds:
$$f(Sp(T))=Sp(f(T)).$$ The existence of the Taylor functional calculus is proved in \cite{Tay70A} by means of Cauchy-Weil formulas and its proof is rather involved.  In what follows, we provide two simpler integral formulas of the Taylor functional calculus concerning rational function kernels.  We first briefly recall the definition of the Hardy space.  The Hardy space $H^2(\Omega)$ is consists of  all holomorphic functions on $\Omega$ satisfying
$$\sup_{R\rightarrow 1^-}\int_{\mathcal{S}}\vert f(Rw)\vert^2d\sigma(w)<\infty,$$
 where $\mathcal{S}$ is the Shilov boundary of $\Omega$ with normalized $K$-invariant Haar measure $d\sigma.$   There exists a canonical unitary isomorphism between $H^2(\Omega)$ and a closed subspace $H^2(\mathcal{S})$ of  $L^2(\mathcal{S},d\sigma);$  see \cite[Corollary 2.8.47]{Upm96} for details. Thus we  identify  $H^2(\Omega)$ with  $H^2(\mathcal{S}).$ The Szeg\"o kernel of $\Omega$ is given by
$$S(z,\xi)=\Delta(z,w)^{-\frac{n}{r}}, \quad (z,w)\in \Omega\times \mathcal{S}.$$  Let $\mathcal{O}(\bar{\Omega})$ be the set of holomorphic functions on some neighborhoods of $\bar{\Omega}.$ Then the following integral formulas of the Taylor functional calculus hold.
\begin{lem}\label{fhtt}
Suppose $Sp(T)\subset \Omega$  and $f\in \mathcal{O}(\bar{\Omega}),$ then the following integral formulas hold:
  \begin{equation}\label{int} f( T)=\int_{\mathcal{S}}\frac{f(w)}{\Delta(T,w)^{\frac{n}{r}}}d\sigma(w),\end{equation}
  \begin{equation}\label{inttt} f( T)=\int_{\Omega}\frac{f(w)}{\Delta(T,w)^{N}}dv(w).\end{equation}
\end{lem}
\begin{proof} By the spectral mapping theorem, we have  $Sp \,\Delta(T,w)= \Delta(Sp(T),w),$ for every $w\in \mathcal{S}\subset \partial \Omega.$  It follows from \cite[Section 7.5]{Loo77} that $ \Delta(z,\xi)\neq 0$ for  the pair $(z,\xi)\in Z\times Z$ whenever the spectral norm satisfying $\Vert z\Vert\cdot\Vert \xi\Vert<1.$ Then  $\bm{0}\notin Sp \,\Delta(T,w),$ and hence the map $\Delta(T,w)$ is invertible on the Shilov boundary $\mathcal{S}.$ Combining  the formula (\ref{dkl}) with the uniqueness of the Taylor expansion of analytic functions follows  that \begin{equation} \Delta(z,w)^{-\lambda}=\sum_{\bm{m}\geq0}(\lambda)_{\bm{m}} K_{\bm{m}}(z,w)\notag\\\end{equation}
converges compactly and absolutely on the circular domain  $ t\Omega\times t^{-1}\Omega$ for every $t>0,$ where $t\Omega=\{tz:z\in \Omega \}.$ Thus, by the continuity and uniqueness of the Taylor functional calculus, it follows that for all $\lambda\in \mathbb{R},$ \begin{equation}\label{delT} \Delta(T,w)^{-\lambda}=\sum_{\bm{m}\geq0}(\lambda)_{\bm{m}} K_{\bm{m}}(T,w)\end{equation} for every $w\in \mathcal{S},$ and the convergence is uniform on $\mathcal{S}$ since the compactness of $\mathcal{S}.$ Let $f=\sum_{\bm{m}\geq \bm{0}} f_{\bm{m}}$  be the Peter-Schmid-Weyl decomposition of $f.$  Clearly, the convergence is compactly on $\Omega\cup \mathcal{S}$ and the polynomials $f_{\bm{m}}$ satisfying $$f_{\bm{m}}(z)= \int_\mathcal{S} f(w)(\frac{n}{r})_{\bm{m}} K_{\bm{m}}(z,w)d\sigma(w)$$ for all $\bm{m}\geq \bm{0}.$
Thus  \begin{equation}\begin{split}\label{}
f(T)&=\sum_{\bm{m}\geq \bm{0}} f_{\bm{m}}(T) \notag\\
     &=\sum_{\bm{m}\geq \bm{0}} \int_\mathcal{S} f_{\bm{m}}(w)(\frac{n}{r})_{\bm{m}} K_{\bm{m}}(T,w)d\sigma(w)\\
     &=\sum_{\bm{m}\geq \bm{0}} \int_\mathcal{S} f(w) (\frac{n}{r})_{\bm{m}} K_{\bm{m}}(T,w)d\sigma(w)\\
      &=\int_\mathcal{S} f(w) \sum_{\bm{m}\geq \bm{0}} (\frac{n}{r})_{\bm{m}} K_{\bm{m}}(T,w)d\sigma(w)\\
      &=\int_\mathcal{S}\frac{f(w)}{\Delta(T,w)^{\frac{n}{r}}}d\sigma(w).
\end{split}\end{equation}
This proves the formula (\ref{int}).  Similarly, the formula (\ref{inttt}) can be proved.
\end{proof}

\begin{rem} 
If the compact set $\Delta_T\subset \Omega,$ then the Dunford-Riesz functional calculus for a single operator is enough for our purposes. We now give a brief explanation. Since $Sp(T)\subset \Delta_T\subset\Omega,$ there exist  polydiscs $ \Delta_{\bm{r}}=\{z\in\mathbb{C}^n:\vert z_i\vert<r_i,i=1,\cdots,n\}$ and $\Delta_{\bm{r}'}=\{z\in\mathbb{C}^n:\vert z_i\vert<r_i',i=1,\cdots,n\}$ such that $$\Delta_T\subsetneq\Delta_{\bm{r}}\subset\bar{\Delta}_{\bm{r}}\subset\Delta_{\bm{r}'}\subset\bar{\Delta}_{\bm{r}'}\subset\Omega.$$ Since a holomorphic function on a circular domain  containing the origin has a homogeneous holomorphic polynomial expansion which converges compactly,  it follows from the continuity and uniqueness of the Taylor functional calculus that
 \begin{equation}\label{intt} f( T)=\frac{1}{(2\pi\sqrt{-1})^n}\int_{\vert \xi_1\vert=r_1}\cdots\int_{\vert \xi_n\vert=r_n} \frac{d\xi_1}{\xi_1-T_1}\cdots \frac{d\xi_n}{\xi_n-T_n}f(\xi)\notag\\\end{equation}
for $f\in \mathcal{O}(\Omega),$ where the right side repeatedly uses the Cauchy integral formula.
\end{rem}

In the following, we establish a local extension property of the automorphism on bounded symmetric domains. Let $ \Omega_1,\Omega_2$ be two domains,  we say that $\phi: \bar{\Omega}_1\rightarrow \Omega_2 $ is holomorphic if $\phi$ is the restriction of a holomorphic map from an open neighborhood of $\bar{\Omega}_1$ to $ \Omega_2,$ namely there exists an open set $U\supset\bar{\Omega}_1$ and a holomorphic map $\tilde{\phi}: U\rightarrow  \Omega_2$ satisfying $\tilde{\phi}\vert_{\Omega_1}=\phi.$  We also say  that  $\phi: \bar{\Omega}_1\rightarrow \bar{\Omega}_2 $ is biholomorphic if $\phi$ is the restriction of a biholomorphism between neighborhoods of $\bar{\Omega}_1, \bar{\Omega}_2.$ The following lemma shows that every automorphism on $\Omega$ is indeed also a biholomorphism on $\bar{\Omega},$ which generalizes the fact \cite[Remark 3.2.6]{Chu21} that every Mobius transformation $g_{z_0}$ is indeed  biholomorphic on $\bar{\Omega}$ for $z_0\in\Omega.$
 
\begin{lem}\label{bihg} If $\phi\in\text{Aut}(\Omega),$ then  $\phi:\bar{\Omega}\rightarrow\bar{\Omega}$ is biholomorhphic.
 \end{lem}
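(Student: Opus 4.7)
The strategy is to decompose $\phi$ into two elementary pieces and extend each across $\bar\Omega$ separately. Setting $z_0:=\phi(\bm 0)\in\Omega$, the identity $g_{-z_0}(z_0)=\bm 0$ recalled in the excerpt shows that $g_{-z_0}\circ\phi\in\textrm{Aut}(\Omega)$ fixes the origin. Because $\Omega$ is bounded and circular, the isotropy subgroup at $\bm 0$ equals $K=\{g\in GL(\mathbb{C}^n):g(\Omega)=\Omega\}$ as noted right after the definition of $\textrm{Aut}(\Omega)$, so $g_{-z_0}\circ\phi=k$ for some linear $k\in K$ and hence $\phi=g_{z_0}\circ k$. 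It then suffices to show that each factor extends to a biholomorphism on an open neighborhood of $\bar\Omega$ that sends $\bar\Omega$ onto $\bar\Omega$.

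The linear factor $k$ is immediate: it is already a biholomorphism of the ambient $\mathbb{C}^n$, and continuity together with $k(\Omega)=\Omega$ forces $k(\bar\Omega)=\bar\Omega$. The real content lies in the M\"obius factor $g_{z_0}(w)=z_0+B(z_0,z_0)^{\frac12}w^{-z_0}$, whose only possible singularity arises from $B(w,-z_0)$ being non-invertible in the formula $w^{-z_0}=B(w,-z_0)^{-1}(w+\{w z_0^\ast w\})$. By the criterion from \cite[Section 7.5]{Loo77} cited in the proof of Lemma \ref{fhtt}, $B(u,v)$ is invertible whenever the spectral norms satisfy $\Vert u\Vert\cdot\Vert v\Vert<1$. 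Since $\Vert z_0\Vert<1$, one may pick $\varepsilon>0$ with $(1+\varepsilon)\Vert z_0\Vert<1$; then $B(w,-z_0)$ is invertible for every $w$ in the open neighborhood $U_\varepsilon:=\{w:\Vert w\Vert<1+\varepsilon\}\supset\bar\Omega$, and the explicit formula provides a holomorphic extension of $g_{z_0}$ to $U_\varepsilon$.

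To upgrade this extension to a biholomorphism, I would apply the identical argument to $g_{-z_0}$, obtaining a holomorphic extension to a neighborhood $U_{\varepsilon'}\supset\bar\Omega$. The equalities $g_{-z_0}\circ g_{z_0}=\textrm{Id}=g_{z_0}\circ g_{-z_0}$ hold on $\Omega$, hence by analytic continuation on the common neighborhood $U_\varepsilon\cap g_{z_0}^{-1}(U_{\varepsilon'})$ of $\bar\Omega$; this gives the required biholomorphism between neighborhoods of $\bar\Omega$. Composing $g_{z_0}$ with the linear $k$ produces the desired extension of $\phi$, and continuity combined with the bijectivity of $\phi$ on $\Omega$ forces $\phi(\bar\Omega)=\bar\Omega$. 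The only mild obstacle is justifying the decomposition $\phi=g_{z_0}\circ k$ for an arbitrary element of $\textrm{Aut}(\Omega)$ rather than merely for its identity component $G$, but this is supplied by the circular-domain identification of the isotropy $K$ already quoted from the excerpt, after which the construction is formulaic.
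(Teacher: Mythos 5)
Your proposal is correct and uses the same decomposition $\phi = g_{z_0}\circ k$ as the paper, with the same holomorphic extension of $g_{z_0}$ via quasi-invertibility of $B(w,-z_0)$ on $\{\Vert w\Vert\cdot\Vert z_0\Vert<1\}$. The genuine divergence is in the final step of upgrading the extension to a biholomorphism: the paper invokes Kaup's explicit Jacobian formula $g_{z_0}'(z)=B(z_0,z_0)^{1/2}B(z,-z_0)$ from \cite[Formula (2.18)]{Kau83} and the holomorphic inverse function theorem, whereas you extend both $g_{z_0}$ and $g_{-z_0}$ and propagate the composition identity $g_{-z_0}\circ g_{z_0}=\mathrm{Id}$ by analytic continuation. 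Your route avoids the Jacobian computation and, as a bonus, directly supplies a global inverse on a neighborhood of $\bar\Omega$ (the paper's Jacobian argument yields only local injectivity and leaves the step from local to global biholomorphism implicit). The one point to make explicit is the domain on which the continuation lives: you should take the connected component of $U_\varepsilon\cap g_{z_0}^{-1}(U_{\varepsilon'})$ containing $\bar\Omega$, and note that $g_{z_0}(\bar\Omega)\subset\bar\Omega\subset U_{\varepsilon'}$ (by continuity of the extension and $g_{z_0}(\Omega)=\Omega$) so that this set is indeed a neighborhood of $\bar\Omega$. With that supplied, your argument is complete.
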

 \begin{proof}  Without loss of generality, we assume that  $\phi(\bm{0})=z_0.$  Note that $g_{z_0}(w)=z_0+B(z_0,z_0)^{\frac{1}{2}}w^{-z_0}$ satisfying $g_{z_0}(\bm{0})=z_0,$  where $z^{\xi}=B(z,\xi)^{-1}(z-\{z\xi^*z\})$ is the quasi-inverse of the pair $(z,\xi)\in \Omega\times\Omega.$  Thus $g_{z_0}^{-1}\circ\phi \in \text{Aut}(\Omega)$ and $g_{z_0}^{-1}\circ\phi(\bm{0})=\bm{0},$ which implies that $g_{z_0}^{-1}\circ\phi\in K.$ This indeed shows that the  automorphism $\phi$ admits the factorization $\phi=g_{z_0}\circ k$ where $k\in K.$  Suppose $z_0\neq\bm{0},$ define the open spectral ball  $\Omega_{z_0}$ by $\Omega_{z_0}:=\{\Vert z\Vert<\frac{1}{\Vert z_0\Vert}\}\supset\bar{\Omega}.$ Since $k\in K\subset GL(Z)$ is linear and $k(\Omega)=\Omega,$  we deduce that $k(\Omega_{z_0})=\Omega_{z_0}.$
 It suffices to show that  $g_{z_0}$ is biholomorphic  on $\Omega_{z_0}=\{\Vert z\Vert<\frac{1}{\Vert z_0\Vert}\}\supset\bar{\Omega}$ by the holomorphic version of the inverse function theorem. To show that  $g_{z_0}$ is biholomorphic  on $\Omega_{z_0},$ it is enough to show that the Jacobi matrix $g_{z_0}'$ of $g_{z_0}(z)=z_0+B(z_0,z_0)^{\frac{1}{2}}z^{-z_0}$ is nondegenerate on $\Omega_{z_0}.$ By \cite[Lemma 3.2.25]{Chu21},  we conclude that $$g_{z_0}'(z)=B(z_0,z_0)^{\frac{1}{2}}B(z,-z_0)$$ for $z\in\Omega_{z_0}.$
Then the quasi-invertibility of $z^{-z_0}$ on $\Omega_{z_0}$ implies that  $g_{z_0}'$  is nondegenerate on $\Omega_{z_0}.$
The remaining case of $z_0=\bm{0}$ is trivial, since in this case $\phi=k\in K,$ which is biholomorphic on every open set $t\Omega=\{tz:z\in\Omega\}\supset\bar{\Omega}$ for $t>1.$
\end{proof}

 The composition law for Taylor functional calculus was proved in  \cite{Put82}, in particular,  the following composition law on bounded symmetric domains holds.
\begin{lem}\label{fgc} Suppose that $ Sp(T)\subset\Omega_1 $ and   $g:\bar{\Omega}_1\rightarrow\Omega_2$ is a holomorphic map between two bounded symmetric domains. If $f:\bar{\Omega}_2\rightarrow\Omega_3$ is a holomorphic map,  then $$(f\circ g)(T)=f(g(T)).$$
\end{lem}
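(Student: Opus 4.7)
The plan is to combine the integral formula from Lemma \ref{fhtt} with the elementary polynomial composition law and a Bochner-type interchange of the boundary integral with the Taylor functional calculus of $T$. Write $g=(g_1,\ldots,g_{n_2})$ with $g_i\in\mathcal{O}(\bar{\Omega}_1)$, so that $g(T):=(g_1(T),\ldots,g_{n_2}(T))$ is a commuting tuple, since each $g_i(T)$ lies in the bicommutant of $T$.

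First I would check that $f(g(T))$ is well-defined. The spectral mapping theorem gives $Sp(g(T))=g(Sp(T))$; since $Sp(T)\subset\Omega_1$ is compact and $g(\bar{\Omega}_1)\subset\Omega_2$, we conclude $Sp(g(T))\subset\Omega_2$, so the Taylor functional calculus on $\Omega_2$ applies to $f$ at $g(T)$. The key intermediate step is a composition identity for the Szeg\"o kernel. For every polynomial $p\in\mathbb{C}[z_1,\ldots,z_{n_2}]$, the identity $p(g(T))=(p\circ g)(T)$ follows directly from the algebra-homomorphism property applied componentwise, since $g_i(T)^{k_i}=(g_i^{k_i})(T)$ and products of functional calculi are the functional calculus of products.

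Now fix $w\in\mathcal{S}_2$; by \cite[Section 7.5]{Loo77} the pair $(g(z),w)$ is quasi-invertible for $z$ in a neighborhood of $\bar{\Omega}_1$, so $h_w(z):=\Delta_2(g(z),w)^{-n_2/r_2}$ is holomorphic there. Using (\ref{dkl}), which converges compactly on $t\Omega_2\times t^{-1}\Omega_2$ for all $t>0$ with $K_{2,\bm{m}}(\cdot,w)$ a polynomial, the polynomial composition law applied termwise together with continuity of the Taylor functional calculus yields
\begin{equation*}
\Delta_2(g(T),w)^{-n_2/r_2}=h_w(T).
\end{equation*}
Applying Lemma \ref{fhtt} to $g(T)$ on $\Omega_2$ and substituting this identity gives
\begin{equation*}
f(g(T))=\int_{\mathcal{S}_2}f(w)\,h_w(T)\,d\sigma_2(w)=\Big(z\mapsto\int_{\mathcal{S}_2}f(w)h_w(z)\,d\sigma_2(w)\Big)(T),
\end{equation*}
after exchanging the boundary integral with the Taylor functional calculus of $T$. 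The inner integral equals $(f\circ g)(z)$ by applying Lemma \ref{fhtt} pointwise to the scalar $g(z)\in\Omega_2$, which completes the proof.

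The main technical obstacle is the interchange of the integral and the functional calculus of $T$. I would handle it by approximating the Hardy-boundary integral by Riemann sums on the compact set $\mathcal{S}_2$, using the norm-continuity of the operator-valued map $w\mapsto h_w(T)$ (which follows from the uniform convergence of the $K_{2,\bm{m}}$-expansion for $w$ in a compact neighborhood of $\mathcal{S}_2$ after shrinking), and then passing to the limit by invoking the continuity of the Taylor functional calculus of $T$ on holomorphic functions depending continuously on a parameter.
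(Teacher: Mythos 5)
Your proof is correct and is structurally close to the paper's, but you handle the two delicate steps differently. The paper starts from the same point -- the integral formula of Lemma~\ref{fhtt} applied to the tuple $g(T)$ over $\mathcal{S}_2$ -- but then expands $\Delta_2(g(T),w)^{-n_2/r_2}$ as a second Szeg\"o integral over $\mathcal{S}_1$ (justified by a terse appeal to the fact that $h(T)^{-1}=\int_{\mathcal{S}_1}h(z)^{-1}\Delta_1(T,z)^{-n_1/r_1}\,d\sigma_1(z)$ for invertible $h\in\mathcal{O}(\bar{\Omega}_1)$), applies Fubini to swap the double boundary integral, evaluates the inner $\mathcal{S}_2$-integral by the scalar reproducing formula, and finishes with one more application of the integral formula over $\mathcal{S}_1$. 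You instead first prove the operator identity $\Delta_2(g(T),w)^{-n_2/r_2}=h_w(T)$ directly by applying the polynomial composition law termwise to the compactly convergent expansion (\ref{dkl}) and invoking continuity of the functional calculus, and then interchange the $\mathcal{S}_2$-integral with the $T$-functional calculus via Riemann-sum approximation. What your route buys is that it makes fully explicit the composition identity for the Szeg\"o kernel which the paper's second equality uses implicitly -- and in particular it does not implicitly require $n_2/r_2$ to be an integer; what the paper's route buys is that Fubini for the operator-valued double integral packages the interchange cleanly without needing the Riemann-sum argument. Both are sound; the net content (scalar Szeg\"o reproduction over $\mathcal{S}_2$, plus the $\mathcal{S}_1$-integral formula for $T$) is the same.
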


For every $c>0$ and $\bm{d}\in \mathbb{C}^n,$ they determine a linear transformation of the operator $n$-tuple $T=(T_1,\cdots, T_n)$ on the Hilbert space $H$ in the form of $cT+\bm{d},$  such a linear transformation is called permissive for the triple $(\Omega, H, T),$ if the compact set  \begin{equation}\label{pers} Sp(cT+\bm{d})=c\cdot Sp(T)+\bm{d}\subset \bar{\Omega}.\end{equation} For a given triple $(\Omega, H,T),$ there  exist infinitely many permissive  linear transformations. In the sequel, we denote an arbitrary permissive linear transformation for a given triple $(\Omega, H, T)$ by $\widehat{T}=(\widehat{T}_1,\cdots,\widehat{T}_n).$ We are now in a position to prove Theorem 1.

{\noindent{\bf{Proof of  Theorem 1.}}
 Suppose $\widehat{S}=cS+\bm{d},$  where $c>0$ and $\bm{d}=(d_1,\cdots,d_n)\in \mathbb{C}^n.$ Then  $$[\widehat{S}_i,\widehat{S}_j^*]=[ c{S}_i+d_i,c{S}_j^*+\bar{d}_j]=c^2[S_i,S_j^*],$$
for all $i,j=1,\cdots,n.$ Hence the equivalence between (1) and (2) follows.

It remains to prove the equivalence of  (2) and (3). Let $\phi=(\phi_1,\cdots,\phi_n).$ We first prove that (2) implies (3). The proof will be divided into two cases.
The first case is  $Sp(\widehat{S})\cap\partial\Omega=\emptyset,$ namely,  $Sp(\widehat{S})\subset \Omega.$ Since $\Omega$ is a bounded symmetric domain with rank $r,$ it follows from Lemma \ref{bihg} that every component function $\phi_i$ of the automorphism $\phi$ belongs to $\mathcal{O}(\bar{\Omega}).$ Applying the integral formula of Taylor functional calculus (\ref{inttt}) to holomorphic functions $\phi_i\in \mathcal{O}(\bar{\Omega}),$ it follows that
   \begin{equation}\label{iint} \phi_i( \widehat{S})=\int_{\Omega}\phi_i(z)\Delta(\widehat{S},z)^{-N}dv(z),\end{equation}
where $\Delta(\widehat{S},z)$ is invertible in $B(M^\bot)$ for $z\in \bar{\Omega}.$ 
Since $N$ is a positive integer, it follows  from Lemma \ref{age}\hspace{0.1em}(2) that
\begin{equation}\begin{split}\label{}
[\Delta(\widehat{S},z)^{-N},\widehat{S}_j^*]&=\Delta(\widehat{S},z)^{-N}[\Delta(\widehat{S},z)^{N},\widehat{S}_j^*]\Delta(\widehat{S},z)^{-N}. \\
\end{split}\end{equation}
Since $\Delta(\widehat{S},z)$ is a polynomial of the  $\widehat{S},$  it follows from Proposition \ref{Sij} that the map
 $z\longmapsto \phi_i(z)[\Delta(\widehat{S},z)^{-N},\widehat{S}_j^*]\in \mathcal{L}^p(M^\bot)$ is continuous on $\bar{\Omega},$  for $i=1,\cdots,n.$ Thus $[\phi_i( \widehat{S}),\widehat{S}_j^*]\in \mathcal{L}^p(M^\bot)$ by the formula (\ref{iint}), for all $i,j=1,\cdots,n.$  
 This implies that
$$[\widehat{S}_j,\phi_i( \widehat{S})^*]=[\phi_i( \widehat{S}),\widehat{S}_j^*]^*\in \mathcal{L}^p(M^\bot).$$ Similarly, by using the integral formula of Taylor functional calculus (\ref{inttt}) again, we conclude  that  $$[{\phi_i}(\widehat{S}),\phi_j(\widehat{S})^*]=[\phi_j( \widehat{S}),\phi_i( \widehat{S})^*]^*\in \mathcal{L}^p(M^\bot),$$
for all $i,j=1,\cdots,n.$
The second case is $Sp(\widehat{S})\cap\partial\Omega\neq\emptyset.$ This case can be reduced to the first case. Since
every component function of the automorphism $\phi=(\phi_1,\cdots,\phi_n)$ belongs to $\mathcal{O}(\bar{\Omega}),$ it follows that there exists an open set $D$ which contains the compact set $\bar{\Omega}$ such that $\phi$ is defined  on $D.$ Recall that the open set $\Omega$ is the unit ball in the spectral norm and  the topology induced by the spectral norm  is coincide with the topology induced by the Euclidean norm,  it follows  that  there exists a $\delta>0$ satisfying \begin{equation}\label{exto}\Omega\subsetneq (1+\delta)\Omega\subsetneq (1+\delta)\bar{\Omega}\subsetneq D.\end{equation}
Let $\phi_{(\delta)}=(\phi_{1,\delta},\cdots,\phi_{n,\delta})\in \mathcal{O}(\bar{\Omega})\otimes\mathbb{C}^n$ defined by $$\phi_{i,\delta}(w)=\phi_{i}((1+\delta)w),\quad \forall w\in \Omega,$$
$i=1,\cdots,n.$ Put $ \widehat{S}_{(\delta)}=(\widehat{S}_{1,\delta},\cdots,\widehat{S}_{n,\delta})=\frac{1}{1+\delta}\widehat{S}.$
 By the spectral mapping theorem and (\ref{exto}),  we have  $$Sp( \widehat{S}_{(\delta)})\subseteq \frac{1}{1+\delta}\bar{\Omega}\subsetneq {\Omega},$$ which means that $\widehat{S}_{(\delta)}=\frac{1}{1+\delta}\widehat{S} $ is a permissive  linear transformation of  $\widehat{S}.$
 Thus the equivalence of  (1) and (2)  implies that $(M^\bot,  \widehat{S}_{(\delta)})$ is $p$-essentially normal. From the first case, it follows that $$[\phi_i( \widehat{S}),\phi_j( \widehat{S})^*]=[\phi_{i,\delta}(\widehat{S}_{(\delta)}),\phi_{j,\delta}(\widehat{S}_{(\delta)})^\ast]\in\mathcal{L}^p(M^\bot),$$ for all $i,j=1,\cdots,n.$ Hence (2) implies (3).

Similarly, the same argument for $\phi^{-1}$ and  $\phi(\widehat{S})=(\phi_1(\widehat{S}),\cdots,\phi_n(\widehat{S}))$ follows that
 $[{\phi^{-1}}_i(\phi(\widehat{S})),{\phi^{-1}}_j(\phi(\widehat{S}))^*]\in \mathcal{L}^p(M^\bot)$  if $[{\phi_i}(\widehat{S}),\phi_j(\widehat{S})^*]\in\mathcal{L}^p(M^\bot).$ Then Lemma \ref{fgc} shows that
$$[\widehat{S}_i,\widehat{S}_j^*]=[{\phi^{-1}}_i(\phi(\widehat{S})),{\phi^{-1}}_j(\phi(\widehat{S}))^*]\in\mathcal{L}^p(M^\bot),$$ for all $i,j=1,\cdots,n.$
This proves that  (3) implies (2),  and the proof is complete.
\qed

\begin{exm} (1) Let $\Omega$ be the Lie ball in $\mathbb{C}^n, n\geq3.$ 
For the  inner function $\eta(w)=\sum_{i=1}^nw_i^2,$ denote $\mathcal{Q}=H^2(\Omega) /\eta H^2(\Omega)$ by the corresponding Beurling-type quotient submodule. It follows from \cite{Zha21} that $( \mathcal{Q},S_w)$ is $\infty$-essentially normal and  $Sp(S_w)\subset \bar{\Omega}.$   Let $\phi: \Omega\rightarrow \Omega$ be an automorphism, it follows from Lemma \ref{Hoo} that $\phi(S)=S_\phi,$  the truncated Toeplitz operator $n$-tuple. Thus Theorem 1  implies that 
$[S_{\phi_i},S_{\phi_j}^*]\in\mathcal{L}^\infty(\mathcal{Q})$ for all $i,j=1,\cdots,n.$ 

(2) Let $\Omega\subset \mathbb{C}^n$ be a bounded symmetric domain with rank $r>2$ and  $Z_1=\{z\in\mathbb{C}^n:\text{rank} \hspace{0.1em}(z)\leq1\}$ be a Kepler variety.  
Denote  $ I_{Z_1}$ by the corresponding radical homogenous polynomail ideal 
and $\mathcal{Q}=H^2(\Omega)/\text{Cl}[I_{Z_1}]$ by the quotient submodule, where $\text{Cl}[I_{Z_1}]$ is the closure of $I_{Z_1}.$ It follows from \cite[Theorem 4.2 and Proposition 4.4]{UpW16} that  $(\mathcal{Q},S_w)$ is  $p$-essentially normal for all $p>N,$ where $N$ is the genus. Thus Theorem 1  implies that $[S_{\phi_i},S_{\phi_j}^*]\in\mathcal{L}^p(\mathcal{Q})$ for all $p>N$ and $i,j=1,\cdots,n.$ 
\end{exm}

Similarly, the same is also true for the submodule $(M, T).$

\begin{cor}\label{ccr}

Suppose $\phi$ is an automorphism on  $\Omega.$  Let $(M, T)$ be a submodule of a  Hilbert module $(H, T),$  then the following are equivalent:
\begin{enumerate}
\item $(M,T)$ is $p$-essentially normal.
\item $(M,\widehat{T})$ is $p$-essentially normal.
\item $(M,\phi(\widehat{T}))$ is $p$-essentially normal.
\end{enumerate}
\end{cor}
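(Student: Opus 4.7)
The plan is to mirror the proof of Theorem 1 line by line, replacing the compression tuple $S=(P_{M^\bot}T_1,\ldots,P_{M^\bot}T_n)$ on the quotient submodule $M^\bot$ with the restriction tuple $T|_M=(T_1|_M,\ldots,T_n|_M)$ acting on the joint-invariant subspace $M$. Since $M$ is joint-invariant, each $T_i$ restricts to a bounded operator on $M$, the restricted tuple is commuting, and $(M,T)$ is $p$-essentially normal if and only if $[T_i|_M,(T_j|_M)^*]\in\mathcal{L}^p(M)$ for all $i,j$. The auxiliary tools used in Theorem 1 — Lemma \ref{age}, Proposition \ref{Sij}, Lemma \ref{bihg}, Lemma \ref{fhtt}, Lemma \ref{fgc}, and Lemma \ref{lff} — are stated for arbitrary commuting operator tuples on Hilbert spaces and apply without modification to $T|_M$.

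For the equivalence (1) $\Leftrightarrow$ (2), writing $\widehat{T}=cT+d$ with $c>0$ and $d\in\mathbb{C}^n$, the bilinearity of the commutator immediately gives
\[
[\widehat{T}_i|_M,(\widehat{T}_j|_M)^*]=c^2\,[T_i|_M,(T_j|_M)^*],
\]
so (1) and (2) are trivially equivalent.

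For (2) $\Rightarrow$ (3), I would apply the integral formula of Lemma \ref{fhtt} to $\widehat{T}|_M$ and each component $\phi_i$: Lemma \ref{bihg} places $\phi_i\in\mathcal{O}(\bar{\Omega})$, and the permissibility of $\widehat{T}$ together with the standard containment of the Taylor spectrum under restriction to a joint-invariant subspace gives the spectral condition needed for the representation
\[
\phi_i(\widehat{T}|_M)=\int_{\mathcal{S}}\phi_i(z)\,\Delta(\widehat{T}|_M,z)^{-n/r}\,d\sigma(z).
\]
Splitting $\Delta(\widehat{T}|_M,z)^{-n/r}=\Delta(\widehat{T}|_M,z)^{-(1+b)}\cdot\Delta(\widehat{T}|_M,z)^{-a(r-1)/2}$, invoking Lemma \ref{lff} to realize the second factor as a polynomial in $\widehat{T}|_M$ and $\bar z$, and expanding the commutator with Lemma \ref{age}, the map $z\mapsto\phi_i(z)[\Delta(\widehat{T}|_M,z)^{-n/r},(\widehat{T}_j|_M)^*]$ becomes a continuous $\mathcal{L}^p$-valued function on the compact Shilov boundary $\mathcal{S}$ by Proposition \ref{Sij}. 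Integrating yields $[\phi_i(\widehat{T}|_M),(\widehat{T}_j|_M)^*]\in\mathcal{L}^p$, and re-running the argument with $\widehat{T}_j|_M$ replaced by $\phi_j(\widehat{T}|_M)^*$ (using closedness of $\mathcal{L}^p$ under adjoints and integration) gives $[\phi_i(\widehat{T}|_M),\phi_j(\widehat{T}|_M)^*]\in\mathcal{L}^p$.

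The reverse direction (3) $\Rightarrow$ (2) uses the composition law of Lemma \ref{fgc} applied to $\phi^{-1}$, via the identity
\[
\widehat{T}_i|_M=(\phi^{-1}\circ\phi)_i(\widehat{T}|_M)=\phi^{-1}_i(\phi(\widehat{T}|_M)),
\]
so that the argument above, applied now to the tuple $\phi(\widehat{T}|_M)$ and the biholomorphism $\phi^{-1}$, produces the Schatten-class commutators of the original $\widehat{T}_i|_M$. The only point requiring attention — and the only deviation from transcribing the proof of Theorem 1 verbatim — is the spectral containment $Sp(\widehat{T}|_M)\subset\bar{\Omega}$ that licenses Lemma \ref{fhtt} on the invariant subspace; this is routine given that restriction to a joint-invariant subspace does not enlarge the spectrum outside the polynomially convex hull of $Sp(\widehat{T})$, which already lies in $\bar{\Omega}$. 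With this verified, no essential difficulty appears, and the full chain (1) $\Leftrightarrow$ (2) $\Leftrightarrow$ (3) follows.
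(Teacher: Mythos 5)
Your proposal is correct and follows exactly the route the paper intends: the paper's entire proof of Corollary~\ref{ccr} is the single remark ``Similarly, the same is also true for the submodule $(M,T)$,'' i.e.\ transcribe the proof of Theorem~1 with the restriction tuple $T|_M$ in place of the compression tuple $S$, and all of the auxiliary lemmas (Lemma~\ref{age}, Proposition~\ref{Sij}, Lemma~\ref{bihg}, Lemma~\ref{fhtt}, Lemma~\ref{fgc}, Lemma~\ref{lff}) apply verbatim to any commuting tuple with Taylor spectrum inside $\bar\Omega$. Your one point of extra caution — deriving $Sp(\widehat T|_M)\subset\bar\Omega$ from the polynomial convex hull of $Sp(\widehat T)$ together with the convexity (hence polynomial convexity) of $\bar\Omega$ — is a correct argument, but is in fact unnecessary under the paper's own convention: as in Theorem~1, where $\widehat S$ is permissive for the triple $(\Omega,M^\bot,S)$, here $\widehat T$ is to be read as permissive for the triple $(\Omega,M,T|_M)$, so the spectral condition $Sp(\widehat T|_M)\subset\bar\Omega$ is part of the hypothesis rather than something to be proved.
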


\section{The proof of Theorem 2}\label{seon}
This section is mainly devoted to the proof of Theorem 2. 
The seed of our main idea lies in the following observations. A closed subspace of the  Bergman space on a bounded symmetric domain is a  submodule over the polynomial algebra if and only if it is a submodule over a Noetherian holomorphic function algebra.  This implies that the categories of Bergman modules over these two function algebras are equivalent; see Proposition \ref{CO} and Remark \ref{Steh}  for a more explicit description.  Since the $p$-essential normality problem we are concerned with is independent of the choice of the above two module categories, we focus on studying $p$-essential normality within the framework of  Bergman modules over the holomorphic function algebras rather than polynomial algebras. One significant benefit arising from this change in module categories is that we no longer need to consider how analytic varieties behave near boundaries.

Let $\mathcal{O}(\bar{\Omega})$ be the set of holomorphic functions on some neighborhoods of $\bar{\Omega}.$ We now briefly explain why  $\mathcal{O}(\bar{\Omega})$ is called  an algebra. Two functions $f_1,f_2\in \mathcal{O}(\bar{\Omega})$ are said to be equivalent if there exists an open neighborhood $D\supset\bar{\Omega}$ such that the restrictions of $f_1,f_2$ on $D$ are equal, i.e., $f_1\vert_{D}=f_2\vert_{D},$  which is denoted by $f_1\sim f_2$ and its equivalence class is denoted by $[f_1].$ Suppose $f_1$ is holomorphic on $D_1\supset \bar{\Omega}$  and $f_2$ is holomorphic on $D_2\supset \bar{\Omega},$ where $D_1,D_2$ are open. We can define three binary operations for equivalence classes as follows:
\begin{equation}\begin{split}\label{Ope}
&[f_1]+[f_2]:=[f_1\vert _{D_1\cap D_2}+f_2\vert _{D_1\cap D_2}],\\
&[f_1]\cdot[f_2]:=[f_1\vert _{D_1\cap D_2}\cdot f_2\vert _{D_1\cap D_2}],\\
&c\cdot [f_1]:= [cf_1], \quad\forall c\in \mathbb{C}.
\end{split}\end{equation}
It is clear that the definitions in (\ref{Ope}) are well-defined. Thus ${\mathcal{O}(\bar{\Omega})}/{\sim}$ becomes a complex unit algebra under the operations in  (\ref{Ope}). Note that  $[\bm{1}]$ is its unit where $\bm{1}$ is  the constant function valued $1$ in a neighborhood of $\bar{\Omega}.$  We see that $ [f]$ is invertible in ${\mathcal{O}(\bar{\Omega})}/{\sim}$  if and only if $ [f]$ has a representative which has no zeros in $\bar{\Omega},$ and its inverse is denoted by $[f]^{-1}.$  By abuse of notation, we continue to write $f,{\mathcal{O}(\bar{\Omega})}$ instead of $[f],{\mathcal{O}(\bar{\Omega})}/{\sim},$ respectively, and we also write $f^{-1}$ instead of $[f]^{-1}$ if $ [f]$ is invertible.  
The Noetherianess of ${\mathcal{O}(\bar{\Omega})}$ comes from 
a theorem due to Frisch and  Siu. We can equip the weighted Bergman space  $A^2(dv_\gamma)$ with a natural  $\mathcal{O}(\bar{\Omega})$-module structure by the multiplication of functions, namely $$ f\cdot h:=f\vert_{\Omega} h,$$
for every $f\in \mathcal{O}(\bar{\Omega})$ and $h\in A^2(dv_\gamma),$ which is well-defined.
Let $M$ be  an  $\mathcal{O}(\bar{\Omega})$-submodule and $M^\bot$ be its orthogonal complement, we define the truncated Toeplitz   operator $S_f:M^\bot\rightarrow M^\bot$ with symbol $f\in\mathcal{A}(\Omega)$  by
\begin{equation}\label{abmul} S_f h:=P_{M^\bot} M_fh=P_{M^\bot} (fh),\end{equation} for every $h\in M^\bot,$ where $\mathcal{A}(\Omega)=C(\bar{\Omega})\cap\mathcal{O}(\Omega)$ is the set of holomorphic functions that can be continuously extended to the boundary  $\partial\Omega,$  $M_f$ is the multiplication operator on $A^2(dv_\gamma)$ with symbol $f,$ and $P_{M^\bot}: A^2(dv_\gamma)\rightarrow M^\bot$ is the orthogonal projection. This coincides with the definition (\ref{sps}) when $T=(M_{z_1},\cdots,M_{z_n})$ is  the coordinate multipliers and $f$ is a polynomial, and in this case $S_i=S_{z_i},i=1,\cdots,n.$ The following lemma implies that $M^\bot$ is an  $\mathcal{O}(\bar{\Omega})$-submodule, whose  module actions are given by $$f\cdot h:=S_f h,$$ for every $ h\in M^\bot$ and $ f\in\mathcal{O}(\bar{\Omega}).$
\begin{lem}\label{fgo} Suppose $f,g\in\mathcal{O}(\bar{\Omega}),$ then the following operator identity holds on $M^\bot:$ $$S_{fg}=S_fS_g.$$
\end{lem}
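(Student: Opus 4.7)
The plan is to prove $S_{fg} = S_f S_g$ by exploiting the only nontrivial hypothesis available, namely that $M$ is an $\mathcal{O}(\bar{\Omega})$-submodule. Reformulated, this hypothesis says that for every $f \in \mathcal{O}(\bar{\Omega})$ the multiplication operator $M_f$ (which is bounded on $A^2(dv_\gamma)$, since any $f \in \mathcal{O}(\bar{\Omega})$ is continuous on the compact set $\bar{\Omega}$ and hence bounded) satisfies $M_f M \subset M$. Equivalently, $P_{M^\bot} M_f P_M = 0$, or $P_{M^\bot} M_f = P_{M^\bot} M_f P_{M^\bot}$ when viewed as maps $A^2(dv_\gamma) \to M^\bot$.

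Given this, the calculation is short. For $h \in M^\bot$, write $gh = S_g h + P_M(gh)$ with $S_g h \in M^\bot$ and $P_M(gh) \in M$. Multiply by $f$ to obtain
\begin{equation*}
fgh \;=\; f \cdot S_g h \;+\; f \cdot P_M(gh).
\end{equation*}
By the invariance of $M$ under multiplication by $f$, the second term lies in $M$, hence is annihilated by $P_{M^\bot}$. Applying $P_{M^\bot}$ therefore gives
\begin{equation*}
S_{fg}h \;=\; P_{M^\bot}(fgh) \;=\; P_{M^\bot}(f \cdot S_g h) \;=\; S_f(S_g h),
\end{equation*}
which is the desired identity.

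I expect no serious obstacle. The only things to check carefully are that the operator $S_f$ is well defined on $M^\bot$ for $f \in \mathcal{O}(\bar{\Omega})$ (which reduces to $M_f$ being bounded on $A^2(dv_\gamma)$, already noted), and that $f$ may be represented by a function holomorphic on some open neighborhood of $\bar{\Omega}$ so that $f \cdot P_M(gh)$ makes sense as an element of $A^2(dv_\gamma)$ and still lies in $M$. The latter is precisely the $\mathcal{O}(\bar{\Omega})$-submodule hypothesis on $M$, and is independent of the choice of representative by the equivalence relation defining the algebra $\mathcal{O}(\bar{\Omega})$. Once these points are acknowledged, the identity $S_{fg} = S_f S_g$ follows immediately from the one-line computation above, and in particular shows that the map $f \mapsto S_f$ is an algebra homomorphism $\mathcal{O}(\bar{\Omega}) \to B(M^\bot)$, giving $M^\bot$ the structure of an $\mathcal{O}(\bar{\Omega})$-module consistent with the polynomial module action coming from (\ref{sps}).
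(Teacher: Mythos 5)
Your proof is correct and uses essentially the same argument as the paper: decompose $gh = P_{M^\bot}(gh) + P_M(gh)$, multiply by $f$, invoke the $\mathcal{O}(\bar{\Omega})$-submodule property of $M$ to kill the second term under $P_{M^\bot}$, and read off the identity. The paper simply arranges the same algebra in the reverse direction, starting from $S_f S_g h$ and simplifying to $S_{fg} h$.
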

\begin{proof} Suppose $h\in M^\bot,$ by definition we obtain
\begin{equation}\begin{split}\label{}
S_fS_gh&=P_{M^\bot} (fP_{M^\bot} (gh))\notag\\
&=P_{M^\bot} (f((\text{Id}-P_{M}) (gh)))\\
&=P_{M^\bot} (fgh)-P_{M^\bot} (f(P_{M} (gh)))\\
&=S_{fg}h,\\
\end{split}\end{equation}
where $P_{M^\bot}: A^2(dv_\gamma)\rightarrow M^\bot$  is the orthogonal projection. The third identity holds because that $M$ is an $\mathcal{O}(\bar{\Omega})$-module. This completes the proof.
\end{proof}
\begin{cor}\label{fos} If $f\in\mathcal{O}(\bar{\Omega})$ and has no zeros in $\bar{\Omega},$ then $S_f$ is  invertible on $M^\bot$ and its inverse is
$$ S_f^{-1}=S_{f^{-1}}. $$
\end{cor}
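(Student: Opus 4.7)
The plan is to derive this corollary as an immediate consequence of Lemma \ref{fgo}. The only preliminary step is to verify that under the hypothesis, $f^{-1}$ is a bona fide element of $\mathcal{O}(\bar{\Omega})$. Since $f$ is the equivalence class of a holomorphic function defined on some open set $D \supset \bar{\Omega}$, and it has no zeros on the compact set $\bar{\Omega}$, continuity of $f$ together with compactness of $\bar{\Omega}$ produces an open neighborhood $D'$ with $\bar{\Omega} \subset D' \subset D$ on which $f$ still does not vanish. Then $1/f$ is holomorphic on $D'$, and its equivalence class is a well-defined element $f^{-1} \in \mathcal{O}(\bar{\Omega})$ satisfying $f \cdot f^{-1} = f^{-1} \cdot f = \bm{1}$ in the algebra.

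Once $f^{-1}$ is in hand, I would apply Lemma \ref{fgo} to each of the ordered pairs $(f, f^{-1})$ and $(f^{-1}, f)$, obtaining
\begin{equation*}
S_f S_{f^{-1}} = S_{f\cdot f^{-1}} = S_{\bm{1}}, \qquad S_{f^{-1}} S_f = S_{f^{-1}\cdot f} = S_{\bm{1}}
\end{equation*}
as operator identities on $M^\bot$. Directly from the definition (\ref{abmul}), for every $h \in M^\bot$ one has $S_{\bm{1}} h = P_{M^\bot}(\bm{1}\cdot h) = P_{M^\bot} h = h$, so $S_{\bm{1}} = \mathrm{Id}_{M^\bot}$. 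Hence $S_f$ is invertible on $M^\bot$ with two-sided inverse $S_{f^{-1}}$, which is the claim.

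There is essentially no obstacle: the real content lies in Lemma \ref{fgo}, which has already been proved, and the argument above is just the standard observation that multiplicatively inverting a unit in an algebra produces the inverse of its image under any unital algebra homomorphism. The only point worth being careful about is the passage from the pointwise non-vanishing of a representative on $\bar{\Omega}$ to non-vanishing on an open neighborhood, which is a routine compactness argument and is exactly what makes $\mathcal{O}(\bar{\Omega})$ behave well as a unital algebra of germs.
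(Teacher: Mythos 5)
Your proposal is correct and follows essentially the same approach as the paper: first observe that the non-vanishing of a representative of $f$ on $\bar{\Omega}$ yields an invertible element $f^{-1}\in\mathcal{O}(\bar{\Omega})$ (the paper uses the open set $D_f=\{z\in D:f(z)\neq 0\}\supset\bar{\Omega}$ directly, which does the same work as your compactness remark), and then apply Lemma \ref{fgo} to conclude $S_fS_{f^{-1}}=S_{f^{-1}}S_f=S_{\bm{1}}=\mathrm{Id}_{M^\bot}$.
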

\begin{proof} By the definition, we can assume  that  $f$ is holomorphic on an open set $D\supset\bar{\Omega}.$ Since the set $D_f=\{z\in D: f(z)\neq 0\}$ is  open and the assumption means that $\bar{\Omega}\subset D_f,$ it follows that $f$ is invertible
on $D_f$ and its inverse $f^{-1}\in\mathcal{O}(\bar{\Omega}).$ Combining this with Lemma \ref{fgo}, the desired operator identity follows.
\end{proof}
 It is clear that a closed subspace of $A^2(dv_\gamma)$ is a $\mathbb{C}[z] $-module if it is an $\mathcal{O}(\bar{\Omega})$-module,  Corollary \ref{fos} indicates that the $\mathcal{O}(\bar{\Omega})$-module action is more extensive than $\mathbb{C}[z] $-module action since a nonconstant polynomial is not invertible in the polynomial algebra $\mathbb{C}[z].$ Nevertheless, the following proposition shows that the above two notions of weighted Bergman submodules (and quotient submodules) coincide on bounded symmetric domains.

 \begin{prop}\label{CO} Suppose $M$ is a closed subspace of $A^2(dv_\gamma),$ then the following hold:
\begin{enumerate}
\item  $M$ is a $\mathbb{C}[z]$-module if and only if $M$ is an $\mathcal{O}(\bar{\Omega})$-module.
\item Moreover,  $M^\bot$ is a quotient submodule over the algebra $\mathbb{C}[z]$ if and only if $M^\bot$ is a quotient submodule over the algebra  $\mathcal{O}(\bar{\Omega}).$
\end{enumerate}
\end{prop}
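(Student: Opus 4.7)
The plan is to reduce the statement to a polynomial approximation argument on $\bar{\Omega}$. The easy direction of (1) is immediate, since $\mathbb{C}[\bm{z}]\subset\mathcal{O}(\bar{\Omega})$ as complex algebras, so every $\mathcal{O}(\bar{\Omega})$-submodule is automatically a $\mathbb{C}[\bm{z}]$-submodule. The content is the converse: given a closed $\mathbb{C}[\bm{z}]$-submodule $M$ of $A^2(dv_\gamma)$, I must show $fh\in M$ for every $f\in \mathcal{O}(\bar{\Omega})$ and every $h\in M$. The whole argument will hinge on approximating $f$ uniformly on $\bar{\Omega}$ by polynomials and then using closedness of $M$ in $A^2(dv_\gamma)$.

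First I would exploit the Jordan theoretic realization of $\Omega$ as the open unit ball with respect to the spectral norm, which makes $\Omega$ a balanced circular convex domain and $\bar{\Omega}$ compact. Given a representative $f$ holomorphic on an open neighborhood $D$ of $\bar{\Omega}$, a routine compactness argument produces a radius $r>1$ with $r\bar{\Omega}\subset D$, so $f$ is holomorphic on the balanced domain $r\Omega$. Then the homogeneous Peter--Schmid--Weyl expansion $f=\sum_{\bm{m}\geq 0} f_{\bm{m}}$, with $f_{\bm{m}}\in \mathcal{P}_{\bm{m}}(Z)$, converges compactly on $r\Omega$ and hence uniformly on the compactly contained set $\bar{\Omega}$. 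The partial sums $p_k:=\sum_{|\bm{m}|\leq k} f_{\bm{m}}$ are then polynomials converging uniformly to $f$ on $\bar{\Omega}$.

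Passing to the module structure on $A^2(dv_\gamma)$ is routine. Since $dv_\gamma$ is a probability measure on $\Omega\subset \bar{\Omega}$, one has $\|p_k h - fh\|_\gamma \leq \|p_k - f\|_{\infty,\bar{\Omega}}\,\|h\|_\gamma \to 0$ for every $h\in A^2(dv_\gamma)$. If moreover $h\in M$, then $p_k h\in M$ by the $\mathbb{C}[\bm{z}]$-invariance of $M$, and closedness gives $fh\in M$, completing (1). Part (2) then follows immediately: the quotient submodule structure on $M^\bot$ is determined entirely by the requirement that $M$ be invariant under multiplication by elements of the base algebra, via $f\cdot h = P_{M^\bot}(fh)$, and (1) identifies invariance under $\mathbb{C}[\bm{z}]$ with invariance under $\mathcal{O}(\bar{\Omega})$.

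I do not anticipate any serious obstacle in carrying this out. The only delicate point is justifying uniform (rather than merely compact-in-$\Omega$) convergence of the homogeneous expansion, which is precisely the role of the auxiliary radius $r>1$; the compactness of $\bar{\Omega}$ inside the balanced open set $r\Omega$ is what makes the estimate $\|p_k-f\|_{\infty,\bar{\Omega}}\to 0$ available. A self-contained alternative would be to invoke the Oka--Weil theorem on the polynomially convex compact set $\bar{\Omega}$, but the homogeneous expansion is essentially algebraic and dovetails with the Jordan theoretic setup already in force in the paper.
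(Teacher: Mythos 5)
Your argument is correct and follows essentially the same route as the paper's own proof: exploit the spectral-ball realization to find a slightly larger balanced domain $(1+\delta)\Omega$ containing $\bar{\Omega}$ inside the domain of holomorphy of $f$, expand $f$ in homogeneous polynomials converging uniformly on $\bar{\Omega}$, multiply by $h\in M$, and invoke closedness of $M$ in $A^2(dv_\gamma)$. The only cosmetic differences are that you make the estimate $\|p_kh-fh\|_\gamma\leq\|p_k-f\|_{\infty,\bar{\Omega}}\|h\|_\gamma$ explicit (the paper leaves it implicit) and phrase the expansion as a Peter--Schmid--Weyl sum, which after grouping by total degree coincides with the paper's homogeneous expansion.
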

\begin{proof}
(1) The sufficiency part is clear,  it is enough to prove the necessity part. Suppose that $M$ is a $\mathbb{C}[z]$-module of $A^2(dv_\gamma) $ and  $f\in \mathcal{O}(\bar{\Omega}),$ we have to prove that  $f\cdot g=f\vert_{\Omega}g \in M$ for every $g\in M.$ Since $f\in \mathcal{O}(\bar{\Omega}),$ there exists an open set $D$ which contains the compact set $\bar{\Omega}$ such that $f$ is holomorphic on $D.$ Since the open set $\Omega$ is the unit ball in the spectral norm, and the topology induced by the spectral norm coincides with the topology induced by the Euclidean norm,  it follows that there exists a $\delta>0$ satisfying (\ref{exto}). 
Note that $(1+\delta)\Omega$ is a circular domain containing the origin. Thus $f\in \mathcal{O}(D)\subset  \mathcal{O}((1+\delta)\Omega)$ has a homogeneous expansion \begin{equation}\label{hop} f(z)=\sum_{i=0}^\infty f_i(z)\end{equation} on the domain $(1+\delta)\Omega $ where each $f_i$ is $i$-homogeneous holomorphic polynomial, which converges compactly on $(1+\delta)\Omega.$ Hence the expansion (\ref{hop}) converges uniformly on $\Omega$ since $\Omega\subsetneq\bar{\Omega}\subsetneq (1+\delta)\Omega.$ Therefore, we have \begin{equation}\label{fglm} f\cdot g=\lim_i \sum_{j=0}^if_j\cdot g= \lim_i \sum_{j=0}^if_j g \in M,\end{equation} as $M $ is closed in $A^2(dv_\gamma).$  

(2) It is also enough to prove the necessity part.  Since $M^\bot$   is a quotient submodule over the algebra $\mathbb{C}[z]$ of $A^2(dv_\gamma),$  it follows that  $M$ is a $\mathbb{C}[z]$-submodule of $A^2(dv_\gamma).$ It follows from (1) that $M$ is an $ \mathcal{O}(\bar{\Omega})$-submodule of $A^2(dv_\gamma).$ Hence $M^\bot$ is a quotient submodule over the algebra $ \mathcal{O}(\bar{\Omega}).$
\end{proof}

\begin{rem}\label{Steh}
 For a moment, we will reinterpret Proposition  \ref{CO} from the point of view of the category.  Denote by $\mathbb{C}[z]$-Bmod the category of weighted Bergman modules on $\Omega,$ whose objects are submodules and quotient submodules of weighted Bergman module $A^2(dv_{\gamma})$ over the algebra $\mathbb{C}[z]$ for some $\gamma>-1$ and morphisms are continuous $\mathbb{C}[z]$-module homomorphisms. Similarly, $\mathcal{O}(\bar{\Omega})$-Bmod is denoted by the category of weighted Bergman modules on $\Omega,$  whose objects are submodules and quotient submodules of weighted Bergman module $A^2(dv_{\gamma})$ over the algebra $\mathcal{O}(\bar{\Omega})$ for some $\gamma>-1$ and morphisms are continuous $\mathcal{O}(\bar{\Omega})$-module homomorphisms.  Let $M_1\overset{\varphi}{\rightarrow} M_2$ be a morphism in $\mathbb{C}[z]$-Bmod. Then  Proposition \ref{CO} implies that $M_1, M_2$ are two objects in  $\mathcal{O}(\bar{\Omega})$-Bmod. Moreover, since $\varphi$ is continuous, combining this with  (\ref{fglm}) follows that $M_1\overset{\varphi}{\rightarrow} M_2$ is actually a morphism in $\mathcal{O}(\bar{\Omega})$-Bmod. This introduces a functor $\iota$ satisfying
 \begin{equation}\begin{split}\label{}
\iota:\mathbb{C}[z]\text{-Bmod}&\rightarrow\mathcal{O}(\bar{\Omega})\text{-Bmod},\notag\\
M_1\overset{\varphi}{\rightarrow} M_2&\overset{\iota}{\longmapsto} M_1\overset{\varphi}{\rightarrow} M_2.\\
\end{split}\end{equation}
Clearly $\mathcal{O}(\bar{\Omega})$-Bmod is a subcategory of $\mathbb{C}[z]$-Bmod in general, whose injection functor is denoted by $\tau.$ Then the direct check shows that $\iota\tau=\text{Id}$ and $\tau\iota=\text{Id}.$ The above shows that if $\Omega$  is an irreducible bounded symmetric domain then  $$\mathbb{C}[z]\text{-Bmod}=\mathcal{O}(\bar{\Omega})\text{-Bmod}$$ and $\iota$ is the identity functor (or an isomorphism).  
\end{rem}
Note that the operator problems we are concerned with are independent of the choice of the two above module categories. Thus in the sequel,  we will consider the $p$-essential normality problems in the context of $\mathcal{O}(\bar{\Omega})$-modules rather than  $\mathbb{C}[z]$-modules. Suppose $V$ is an analytic variety in $\Omega,$ as we mentioned before, which can be globally defined, namely, there exist finitely many holomorphic functions $f_1,\cdots,f_m\in\mathcal{O}(\Omega)$ satisfying $$V=\{z\in \Omega: f_1(z)=\cdots=f_m(z)=0\}.$$
We define $M_V$ by  \begin{equation}\label{mvf} M_V:=\{f\in  A^2(dv_\gamma):f(z)=0, \forall z\in V\},\end{equation}
Since evaluation functionals  are continuous on $ A^2(dv_\gamma),$ it follows that $M_V$ is an   $\mathcal{O}(\bar{\Omega})$-submodule of  $ A^2(dv_\gamma).$
 If one assumes further all $f_1,\cdots,f_m\in\mathcal{O}(\bar{\Omega}),$ then $V$ can be naturally extended to an open neighborhood of $\bar{\Omega},$ this is the most considered case in the generalized Geometric Arveson-Douglas  conjecture \cite{DoWy17, GuW20}. For $I$ is a  (prime) proper ideal in $\mathbb{C}[z],$ its common zeros $Z(I)$ is an (irreducible) algebraic variety. Let $V_I=Z(I)\cap \Omega,$ it is clear that the closure $\text{Cl}[I]\subset M_{V_I}$ is a submodule. We denote $\mathcal{Q}_I$ by its quotient submodule.  In some mild conditions, it can be showed \cite{DTY16} that  $\text{Cl}[I]= M_{V_I}.$  However, we do not know the general case.

Let $\phi: \Omega_1\rightarrow \Omega_2 $ be a holomorphic map between two bounded symmetric domains. The   pullback of functions by the holomorphic map $\phi$  is given by $$\phi^*:\mathcal{O}(\Omega_2)\rightarrow\mathcal{O}(\Omega_1),\quad f\mapsto \phi^*(f)= f\circ\phi,$$
which is a complex algebraic (module) homomorphism, especially a linear operator.
The measure $dv_\gamma$ can also be pulled back by  the holomorphic map $\phi,$ which is defined by $$\phi^*(dv_\gamma)(w):=c_\gamma \Delta(\phi(w),\phi(w))^{\gamma} dv(\phi(w)).$$ When  $\phi: \Omega_1\rightarrow \Omega_2 $ is biholomorphic,
then   $\phi^*:A^2(\Omega_2,dv_\gamma)\rightarrow A^2(\Omega_1,\phi^*(dv_\gamma))$ is a unitary operator. Not to be confused with the concept of adjoint of an operator.  Let $M$ be an   $\mathcal{O}(\bar{\Omega}_1)$-submodule of  $ A^2(\Omega_1,dv_\gamma)$  which is determined by an  analytic variety. 
In this case, the pullback $\phi^*(M)=\{\phi^*(f):f\in M\}$ is an   $\mathcal{O}(\bar{\Omega}_2)$-submodule of  $ A^2(\Omega_2,\phi^*(dv_\gamma)),$   which is determined by the  analytic variety $\phi^{-1}(V)=\{\phi^{-1}(z):z\in V\} .$ 
Moreover, $\phi^*(M)$ is unitarily equivalent to $M.$ Hence the pullback $\phi^*(M^\bot)$ is a quotient submodule and  unitarily equivalent to $M^\bot,$ which implies that \begin{equation}\label{pmp}\phi^* P_{M^\bot}=P_{\phi^*(M^\bot)}\phi^*.\end{equation}
\begin{prop}\label{MS} Suppose that $\phi: ({\Omega}_1,w)\rightarrow ({\Omega}_2,z) $ is a biholomorphism with $z=\phi(w),$ and let $M$ be a submodule of $A^2(\Omega_2,dv_\gamma)$  determined by an analytic variety, then the following hold:
\begin{enumerate}
\item $M_{z_i}^{M}=(\phi^*)^{-1} M_{\phi_i(w)}^{\phi^*(M)}\phi^*.$
\item $S_{z_i}^{M^\bot}=(\phi^*)^{-1} S_{\phi_i(w)}^{\phi^*(M^\bot)}\phi^{*}.$
\end{enumerate}
\end{prop}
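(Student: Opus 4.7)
The proposition is essentially a routine check that the unitary $\phi^*$ intertwines the relevant multiplier and compression operators. I would prove the two statements in order, using (1) as a stepping stone to (2).

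For part (1), the plan is a direct pointwise computation. Pick $f \in M$ arbitrary and evaluate the right-hand side on $f$. Since $\phi^*$ sends $M$ unitarily onto $\phi^*(M)$, the function $\phi^*(f) = f \circ \phi$ lies in $\phi^*(M)$. Applying $M_{\phi_i(w)}^{\phi^*(M)}$ to $\phi^*(f)$ gives the function $w \mapsto \phi_i(w)\, f(\phi(w))$, and then applying $(\phi^*)^{-1}$, which is precomposition with $\phi^{-1}$, produces $z \mapsto \phi_i(\phi^{-1}(z))\, f(z) = z_i f(z)$, which is $M_{z_i}^{M} f$. So both sides agree as operators on $M$. (The role of $M$ being determined by an analytic variety is just to guarantee the submodule structure so that the operators involved make sense; nothing deeper is needed here.)

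For part (2), the key input is the intertwining identity (\ref{pmp}), namely $\phi^* P_{M^\bot} = P_{\phi^*(M^\bot)} \phi^*$. Combining this with the relation $\phi^* M_{z_i} = M_{\phi_i(w)} \phi^*$ established in (1) (viewed now as an identity of multipliers on the full Bergman space, which holds by the same calculation), I would compute for $h \in M^\bot$:
\[
\phi^* S_{z_i}^{M^\bot} h = \phi^* P_{M^\bot} M_{z_i} h = P_{\phi^*(M^\bot)} \phi^* M_{z_i} h = P_{\phi^*(M^\bot)} M_{\phi_i(w)} \phi^* h = S_{\phi_i(w)}^{\phi^*(M^\bot)} \phi^* h.
\]
Rearranging via the unitarity of $\phi^*$ yields the stated identity $S_{z_i}^{M^\bot} = (\phi^*)^{-1} S_{\phi_i(w)}^{\phi^*(M^\bot)} \phi^*$.

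There is no genuine obstacle in this proof; it is a conjugation bookkeeping argument. The only subtlety worth being careful about is to confirm that $\phi^*$ does indeed act as a Hilbert space unitary between $A^2(\Omega_2, dv_\gamma)$ and $A^2(\Omega_1, \phi^*(dv_\gamma))$ (which was noted just before the statement, via the pull-back of the weighted measure) and to distinguish this pull-back from the operator adjoint. Once that is kept straight, both identities follow from a one-line element chase.
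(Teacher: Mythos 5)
Your proposal is correct and follows essentially the same route as the paper: part (1) by a direct pointwise/algebra-homomorphism computation showing $\phi^*M_{z_i}=M_{\phi_i(w)}\phi^*$, and part (2) by combining that multiplier intertwining with the projection intertwining $\phi^*P_{M^\bot}=P_{\phi^*(M^\bot)}\phi^*$ and then conjugating by the unitary $\phi^*$. Your remark that the multiplier intertwining should be read on the full Bergman space before projecting is a useful clarification, but it is the same argument as the paper's.
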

\begin{proof}
(1) Let $f\in M.$ Since $\phi^*$ is an  algebraic homomorphism and $\phi^* (z_i)=w_i,$ it follows that 
  $$\phi^* (z_if)=\phi^* (z_i)\phi^* (f)=\phi_i(w)\phi^* (f)\in \phi^*(M),$$ which implies   $$\phi^* M_{z_i}=M_{\phi_i(w)}\phi^*,$$ i.e., $M_{z_i}=(\phi^*)^{-1} M_{\phi_i(w)}\phi^*,$  for  $i=1,\cdots,n.$

(2) Similarly, combined with (\ref{pmp}), it follows that
\begin{equation}\begin{split}\label{}
\phi^* S_{z_i}^{M^\bot}f&=\phi^* P_{M^\bot}M_{z_i}f\notag\\
&=\phi^*P_{M^\bot}(\phi^*)^{-1}\phi^*M_{z_i}f\\
&=\phi^* P_{M^\bot}(\phi^*)^{-1}\phi^*(z_if)\\
&=\phi^* P_{M^\bot}(\phi^*)^{-1}\phi_i(w)\phi^* f\\
&=P_{\phi^*(M^\bot)}\phi_i(w)\phi^* f
\end{split}\end{equation}
for every $ f\in M^\bot$ and  $i=1,\cdots,n.$ Thus $S_{z_i}^{M^\bot}=(\phi^*)^{-1} S_{\phi_i(w)}^{\phi^*(M^\bot)}\phi^{*},$ which completes the proof.
\end{proof}

We are now in a position to complete the proof of  Theorem 2, we first prove a special case.
\begin{lem}\label{kk} Let $M$ be a weighted Bergman submodule determined by an analytic variety in $\Omega.$ Suppose that $\phi: (\Omega,w)\rightarrow (\Omega,z) $ is a linear automorphism  that is $\phi\in K$, then the $\mathbb{C}[w]$-module  $(\phi^*(M^\bot),S_w)$ is $p$-essentially normal if and only if  the $\mathbb{C}[z]$-module  $(M^\bot,S_z)$  is $p$-essentially normal.
\end{lem}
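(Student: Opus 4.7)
The plan is to exploit two features of the hypothesis $\phi\in K$: that $\phi$ is linear, and that elements of $K$ preserve the weighted Bergman measure. Since $K\subset GL(\mathbb{C}^n)$, each component function is a linear polynomial,
\[
\phi_i(w)=\sum_{j=1}^{n} a_{ij}w_j, \qquad i=1,\dots,n,
\]
for some scalars $a_{ij}$. The definition (\ref{abmul}) of the compression shows that $f\mapsto S_f$ is linear in the symbol $f$, so
\[
S_{\phi_i(w)}^{\phi^{*}(M^\bot)}=\sum_{j=1}^{n} a_{ij}\,S_{w_j}^{\phi^{*}(M^\bot)}.
\]

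Next I would verify that $\phi^{*}(dv_\gamma)=dv_\gamma$. Indeed, for $\phi\in K$ we have $|\det\phi'|^2=1$ (since $K$ sits inside the isometry group in the Bergman/Fischer-Fock metric), and the generic polynomial satisfies $\Delta(\phi(w),\phi(w))=\Delta(w,w)$ by the $K$-invariance of the Bergman kernel. Consequently $\phi^{*}:A^2(\Omega,dv_\gamma)\to A^2(\Omega,dv_\gamma)$ is a unitary which restricts to a unitary from $M^\bot$ onto $\phi^{*}(M^\bot)$. Combining this with Proposition \ref{MS}(2) yields
\[
[\,S_{z_i}^{M^\bot},(S_{z_j}^{M^\bot})^{*}\,]
=(\phi^{*})^{-1}\Bigl[\sum_{k}a_{ik}S_{w_k}^{\phi^{*}(M^\bot)},\;\sum_{l}\overline{a_{jl}}\,(S_{w_l}^{\phi^{*}(M^\bot)})^{*}\Bigr]\phi^{*}.
\]

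To finish, I would invoke two standard stability properties of the Schatten ideal $\mathcal{L}^p$: it is invariant under unitary conjugation, and it is closed under finite linear combinations. Expanding the bracket above and using these two properties, one sees at once that $[S_{z_i}^{M^\bot},(S_{z_j}^{M^\bot})^{*}]\in\mathcal{L}^p$ for all $i,j$ whenever $[S_{w_k}^{\phi^{*}(M^\bot)},(S_{w_l}^{\phi^{*}(M^\bot)})^{*}]\in\mathcal{L}^p$ for all $k,l$. The reverse implication is the same argument applied to $\phi^{-1}\in K$, which is again linear with coefficient matrix $(a_{ij})^{-1}$.

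The conceptual obstacle here is minimal: the proof is essentially bookkeeping, relying on linearity of the symbol map and $K$-invariance of $dv_\gamma$. The real difficulty of Theorem 2 is \emph{not} in this lemma but in the complementary case where $\phi$ contains a genuine M\"obius factor $g_{z_0}$ (recall the factorization $\phi=g_{z_0}\circ k$ from the proof of Lemma \ref{bihg}); there the components of $\phi$ are no longer polynomial, the identity $\phi^{*}(dv_\gamma)=dv_\gamma$ fails, and one must instead fall back on the Taylor functional calculus identity from Theorem 1 together with the change of module categories furnished by Proposition \ref{CO}.
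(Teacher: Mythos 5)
Your argument is essentially the paper's: linearity of the symbol map gives $S_{\phi_i(w)}^{\phi^*(M^\bot)}=\sum_j a_{ij}S_{w_j}^{\phi^*(M^\bot)}$, Proposition \ref{MS}(2) transports the commutator across the unitary $\phi^*$, and the ideal and linearity properties of $\mathcal{L}^p$ finish, with the reverse direction handled by $\phi^{-1}\in K$. Your extra check that $\phi^*(dv_\gamma)=dv_\gamma$ for $\phi\in K$ is correct but unnecessary for the argument; the paper relies only on the unitarity of $\phi^*\colon A^2(\Omega,dv_\gamma)\to A^2(\Omega,\phi^*(dv_\gamma))$ already supplied by Proposition \ref{MS}.
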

\begin{proof} Since $\phi: (\Omega,w)\rightarrow (\Omega,z) $ is a linear automorphism, without loss of generality, we can assume   $z_i=\phi_i(w)=\sum_{j=1}^n a_i^jw_j,$  where $(a_i^j)$ is an invertible matrix. 
It follows from Proposition \ref{MS}\hspace{0.1em}(2) that
\begin{equation}\begin{split}\label{}
\phi^{*}[S_{z_i}^{M^\bot},(S_{z_j}^{M^\bot})^*](\phi^*)^{-1}&=[S_{\phi_i(w)}^{\phi^*(M^\bot)},(S_{\phi_j(w)}^{\phi^*(M^\bot)})^*]\notag\\
&=[S_{\sum_{l=1}^na_i^lw_l }^{\phi^*(M^\bot)},(S_{\sum_{m=1}^na_j^mw_m}^{\phi^*(M^\bot)})^*] \\
&= \sum_{l=1}^n \sum_{m=1}^na_i^l\bar{a}_j^m[S_{w_l }^{\phi^*(M^\bot)},(S_{w_m}^{\phi^*(M^\bot)})^*],
\end{split}\end{equation}
where $i,j=1,\cdots,n.$
  Hence $[S_{z_i}^{M^\bot},(S_{z_j}^{M^\bot})^*]\in\mathcal{L}^{p}(M^\bot)$ if $[S_{w_i}^{\phi^*(M^\bot)},(S_{w_j}^{\phi^*(M^\bot)})^*]\in\mathcal{L}^{p}(\phi^\ast(M^\bot)).$
 Thus $(M^\bot,S_z)$  is $p$-essentially normal if $(\phi^*(M^\bot),S_w)$ is $p$-essentially normal. Since the matrix $(a_i^j)$ is invertible, we denote its inverse by $(b_i^j),$ it follows that $w_i=\sum_{j=1}^nb_i^jz_j.$ The same argument shows that $(\phi^*(M^\bot),S_w)$ is $p$-essentially normal if $(M^\bot,S_z)$  is $p$-essentially normal.
\end{proof}

{\noindent{\bf{Proof of  Theorem 2.}}
(1) Without loss of generality, we suppose  $\phi(\bm{0})=z_0.$   It follows from the proof of Lemma \ref{bihg} that the  automorphism $\phi$ admits the factorization $\phi=g_{z_0}\circ k$ where $k\in K,$ which means that $\phi$ admits the following commutative diagram
\begin{displaymath}\xymatrix{\Omega \ar@{->}[dr]_{\phi} \ar[r]^k &\Omega \ar[d]^{g_{z_0}}\\ &\Omega}
\end{displaymath}

By Lemma \ref{kk}, the $p$-essential normality of quotient weighted  Bergman submodules is invariant under the linear automorphism $k$, so it suffices to prove the conclusion in the case of $\phi=g_{z_0}.$ On the other hand, by \cite[Section 1.5]{Upm96}, if the quasi-inverse of the pair $(z,\xi)\in\mathbb{C}^n\times\mathbb{C}^n$ exists, then its quasi-inverse can be written in the rational form of $$z^{\xi}=\frac{p(z,\xi)}{\Delta(z,\xi)},$$ where $p(z,\xi)$ is a $\mathbb{C}^n$-valued polynomial in $z,\bar{\xi},$ $ \Delta(z,\xi)$ is the Jordan triple determinant and has no common factors with $p(z,\xi).$ Moreover,  by \cite[Section 7.5]{Loo77}, the quasi-inverse of the pair $(z,\xi)$ exists whenever the spectrum norm satisfying $\Vert z\Vert\cdot\Vert \xi\Vert<1,$ and $ \Delta(z,\xi)\neq 0$ whenever the quasi-inverse of the pair $(z,\xi)$ exists. Therefore, for every $z_0\in \Omega=\{\Vert z\Vert<1\},$  the biholomorphism $g_{z_0}\in\text{Aut}(\Omega)$ is in fact holomorphic on the irreducible bounded symmetric domain $\Omega_{z_0}=\{\Vert z\Vert<\frac{1}{\Vert z_0\Vert}\}\supset\bar{\Omega}$ and $g_{z_0}:\bar{\Omega}\rightarrow\bar{\Omega}$ is biholomorhphic by Lemma \ref{bihg}. Thus each component function of $g_{z_0}$ can be uniquely extended to a holomorphic function in $\mathcal{O}(\Omega_{z_0}),$ it implies that each component function of $g_{z_0}$ is a rational function and belongs to $\mathcal{O}(\bar{\Omega}).$ So, we can write $$g_{z_0}=(\frac{p_1}{q_1},\cdots,\frac{p_n}{q_n}),$$ where  rational functions $\frac{p_i}{q_i}$ belong to $\mathcal{O}(\bar{\Omega})$ and $q_i$ have  no zeros in $\bar{\Omega}.$   Applying Proposition \ref{MS}\hspace{0.1em}(2) to the case of $\phi=g_{z_0},$ and combining this with Lemma \ref{age} and Corollary \ref{fos}, we obtain
\begin{equation}\begin{split}\label{}
\phi^{*}[S_{z_i}^{M^\bot},(S_{z_j}^{M^\bot})^*](\phi^*)^{-1} &=[S_{\frac{p_i}{q_i}}^{\phi^*(M^\bot)},(S_{\frac{p_j}{q_j}}^{\phi^*(M^\bot)})^*]\notag\\
&= [S_{p_i}^{\phi^*(M^\bot)}({S_{q_i}^{\phi^*(M^\bot)}})^{-1},(S_{p_j}^{\phi^*(M^\bot)})^*((S_{q_j}^{\phi^*(M^\bot)})^*)^{-1}]\\
&= S_{p_i}^{\phi^*(M^\bot)}[{(S_{q_i}^{\phi^*(M^\bot)})}^{-1},(S_{p_j}^{\phi^*(M^\bot)})^*((S_{q_j}^{\phi^*(M^\bot)})^*)^{-1}]\\
&\quad+[S_{p_i}^{\phi^*(M^\bot)},(S_{p_j}^{\phi^*(M^\bot)})^*((S_{q_j}^{\phi^*(M^\bot)})^*)^{-1}](S_{q_i}^{\phi^*(M^\bot)})^{-1}\\
&= U_1+U_2+U_3+U_4,
\end{split}\end{equation}
where
\begin{equation}\begin{split}\label{}
&U_1=S_{p_i}^{\phi^*(M^\bot)}S_{p_j}^{\phi^*(M^\bot)}(S_{q_i}^{\phi^*(M^\bot)})^{-1}(S_{q_j}^{\phi^*(M^\bot)})^{-1} [{S_{q_i}^{\phi^*(M^\bot)}},(S_{q_j}^{\phi^*(M^\bot)})^*]\notag\\
&\quad\quad \cdot((S_{q_j}^{\phi^*(M^\bot)})^*)^{-1}(S_{q_i}^{\phi^*(M^\bot)})^{-1},\\
& U_2=-S_{p_i}^{\phi^*(M^\bot)}(S_{q_i}^{\phi^*(M^\bot)})^{-1}[S_{q_i}^{\phi^*(M^\bot)},(S_{p_j}^{\phi^*(M^\bot)})^*](S_{q_i}^{\phi^*(M^\bot)})^{-1}((S_{q_j}^{\phi^*(M^\bot)})^*)^{-1},\\
&U_3=-(S_{p_j}^{\phi^*(M^\bot)})^*((S_{q_j}^{\phi^*(M^\bot)})^*)^{-1}[S_{p_i}^{\phi^*(M^\bot)},(S_{q_j}^{\phi^*(M^\bot)})^*]((S_{q_j}^{\phi^*(M^\bot)})^*)^{-1}(S_{q_i}^{\phi^*(M^\bot)})^{-1},\\
&U_4=[S_{p_i}^{\phi^*(M^\bot)},(S_{p_j}^{\phi^*(M^\bot)})^*]((S_{q_j}^{\phi^*(M^\bot)})^*)^{-1}(S_{q_i}^{\phi^*(M^\bot)})^{-1},
\end{split}\end{equation}
for all $i,j=1,\cdots,n.$
 Since $\mathcal{L}^{p}(\phi^\ast(M^\bot))$ is an operator ideal, it follows from Proposition \ref{Sij} that $[S_{z_i}^{M^\bot},(S_{z_j}^{M^\bot})^*]\in \mathcal{L}^{p}(M^\bot)$ if $[S_{w_i}^{\phi^*(M^\bot)},(S_{w_j}^{\phi^*(M^\bot)})^*]\in\mathcal{L}^{p}(\phi^\ast(M^\bot)).$  And hence $(M^\bot,S_z)$  is $p$-essentially normal if $(\phi^*(M^\bot),S_w)$ is $p$-essentially normal. Similarly, the same argument for the  automorphism $\phi^{-1}: (\Omega,z)\rightarrow (\Omega,w) $  follows that $(\phi^*(M^\bot),S_w)$ is $p$-essentially normal if $(M^\bot,S_z)$  is $p$-essentially normal.  

(2) Since  $\phi^*:A^2(\Omega_2,dv_\gamma)\rightarrow A^2(\Omega_1,\phi^*(dv_\gamma))$ is a unitary operator, it follows from Proposition \ref{MS}\hspace{0.1em}(2) that
$$[S_{z_i}^{M^\bot},(S_{z_j}^{M^\bot})^*]= (\phi^*)^{-1} [S_{\phi_i(w)}^{\phi^*(M^\bot)}, (S_{\phi_j(w)}^{\phi^*(M^\bot)})^\ast]\phi^{*}$$ for all $i,j=1,\cdots,n.$ Moreover, $(\phi^*(M^\bot),S_w)$ is $p$-essentially normal if and only if $(M^\bot,S_z)$  is $p$-essentially normal from Theorem 2\hspace{0.1em}(1), and then the desired result follows.
\qed

Theorem 2 provides a result that the $p$-essential normality is independent of the choice of the holomorphic coordinate charts.  
Theorem 2 can be regarded as a generalization of \cite[Corollary 3.2]{Sal89} for the $p$-essential normality of quotient submodules, since proper holomorphic self-maps on irreducible bounded symmetric domains with higher dimensions must be automorphisms.  Moreover, it can be checked that quotient submodules $(\phi^*(M^\bot),S_w)$  and $(\phi^*(M^\bot),S_\phi)$ over the same polynomial algebra $\mathbb{C}[w]$ are not $2p$-isomorphic (in the sense of Arveson \cite{Arv07}) in general. 

   \begin{exm} (1) Let $h$ be an irreducible homogeneous polynomial.  Suppose that the variety $V=Z(h)\cap \mathbb{B}$ is smooth away from the origin, where $Z(h)$ is the set of zeros of $h.$ It follows from \cite{DTY16,GuW20} that the quotient Bergman submodule $M_V^\bot$ on $\mathbb{B}^n$ is $p$-essentially normal for all $p>n$.  Suppose that $\phi: (\mathbb{B}^n,w)\rightarrow (\mathbb{B}^n,z) $ is an automorphism with $z=\phi(w)$, then Theorem 2 implies that  
$[S_{\phi_i},S_{\phi_j}^*]\in\mathcal{L}^p(M_V^\bot)$ for all $p>n$ and  $i,j=1,\cdots,n.$

(2) Let $\tilde{V}$ be an analytic variety in an open neighborhood of $\bar{\mathbb{B}}^n$ such that  $\tilde{V}$ intersects $\partial\mathbb{B}^n$ transversely and has no singularities on $\partial\mathbb{B}^n.$  Denote $V=\tilde{V}\cap\mathbb{B}^n$. It follows from \cite[Theorem 4.3]{GuW20} that the quotient Bergman submodule $(M_V^\bot,S_w)$ is  $p$-essentially normal for all $p>2\hspace{0.1em}\text{dim}_\mathbb{C}\hspace{0.1em} V.$ Suppose that $\phi: (\mathbb{B}^n,w)\rightarrow (\mathbb{B}^n,z) $ is an automorphism with $z=\phi(w)$, then Theorem 2 implies that  
$[S_{\phi_i},S_{\phi_j}^*]\in\mathcal{L}^p(M_V^\bot)$ for all $p>2\hspace{0.1em}\text{dim}_\mathbb{C}\hspace{0.1em} V$ and  $i,j=1,\cdots,n.$

  \end{exm}
  \begin{rem}\label{szfg} By the argument of the Taylor functional calculus as we used in the proof of Theorem \ref{th1}, we have the following further result.
Let  $(\Omega,z)$  be a bounded symmetric domain in $\mathbb{C}^n$ and  $(M^\bot, S_z)$ be a quotient Bergman submodule determined by an analytic variety. If the Taylor spectrum  $Sp(S_z)\subset \bar{\Omega},$ then the following are equivalent:
\begin{enumerate}
\item $[S_{z_i},S_{z_j}^*]\in \mathcal{L}^p(M^\bot)$ for all $i,j=1,\cdots,n;$
\item $[S_{f},S_{g}^*]\in\mathcal{L}^p(M^\bot)$ for all $f,g\in \mathcal{O}(\bar{\Omega}).$
\end{enumerate}
The condition $Sp(S)\subset \bar{\Omega}$ is satisfied in many known cases where the Taylor spectrum  $Sp(S_z)$ is calculated. A special case is that $Sp(S_z)=\bar{\Omega}$ when $M=\bm{0}$ by \cite[Theorem 4.1]{AZ03}, thus the weighted Bergman module $(A^2(dv_\gamma),T_z)$ is $p$-essentially normal if and only if  $[T_{f},T_{g}^*]\in\mathcal{L}^p(A^2(dv_\gamma))$ for all $f,g\in \mathcal{O}(\bar{\Omega}).$  This gives another intrinsic characterization of the $p$-essential normality of weighted Bergman modules. 
\end{rem}

 \section{The case on the Wallach set}
  In the preceding section, we consider the $p$-essential normality of weighted Bergman modules on bounded symmetric domains. However, as mentioned at the end of Section 2,  the weighted Bergman spaces are special cases of the reproducing kernel Hilbert holomorphic function spaces corresponding to the continuous part of the Wallach set. In this section, we will consider the biholomorphic invariance of $p$-essential normality of  Hilbert modules on the more general holomorphic function space determined by the Wallach set.


 Although we still consider the operator problems in the $\mathcal{O}(\bar{\Omega})$-module category, the situation is different in general. When $ \lambda>N-1,$ it is trivial to equip the weighted Bergman spaces $H^2_\lambda(\Omega)$ with module actions by the multiplication of functions over algebras $\mathcal{P}(Z)$ and $\mathcal{O}(\bar{\Omega})$ since those inner products are induced by the integration with suitable probability measures. However,  for general $\lambda\in W_{\Omega,c},$ we require additional effort to equip $H^2_\lambda(\Omega)$ with an  $\mathcal{O}(\bar{\Omega})$-module action by the multiplication of functions when it is a Hilbert module over the algebra $\mathcal{P}(Z),$ since the $K$-invariant inner product on $H^2_\lambda(\Omega)$  can not be induced by the integration with a probability measure supported on $\bar{\Omega}.$  Moreover,   for  an arbitrary  function   $f\in\mathcal{O}(\bar{\Omega}),$  it is nontrivial to determine whether $f\in H^2_\lambda(\Omega)$  or not for general $ \lambda<N-1 .$

We first show that the holomorphic function space $H^2_\lambda(\Omega)$ on  $\Omega$ determined by the discrete part $W_{\Omega,d}$ of the Wallach set is not a Hilbert module over the polynomial algebra $\mathcal{P}(Z)=\mathbb{C}[z].$ However, for every continuous point $\lambda\in W_{\Omega,c},$    the function space $(H^2_\lambda(\Omega), T_z)$ is always a Hilbert module over $\mathcal{P}(Z)$ by the coordinate function multipliers, which is a restatement of \cite[Theorem 4.1]{AZ03}. We formulate it in the following lemma.
   \begin{lem}\label{Wal} Let $\lambda\in W_{\Omega},$  then $(H^2_\lambda(\Omega),T_z)$ is  a Hilbert module over $\mathcal{P}(Z)$ with the  coordinate function multipliers if and only if $\lambda\in W_{\Omega,c}.$ Moreover, the Taylor spectrum  $Sp(T_z)=\bar{\Omega}$ whenever  $\lambda\in W_{\Omega,c}.$
   \end{lem}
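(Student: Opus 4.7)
The plan is to attack both halves of the statement through the Peter-Schmid-Weyl decomposition of $H^2_\lambda(\Omega),$ together with the inner product formula (\ref{pqin}) and the Pieri-type branching rule for multiplication by coordinate functions on the isotypic pieces $\mathcal{P}_{\bm m}(Z).$

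For the module-structure equivalence, suppose first that $\lambda\in W_{\Omega,c}.$ Then $(\lambda)_{\bm m}>0$ for every integer partition $\bm m\geq 0$ by Lemma \ref{lff}, so the inner product in (\ref{pqin}) is strictly positive on each isotypic component. Pieri's rule expresses $z_i\cdot p,$ for $p\in\mathcal{P}_{\bm m}(Z),$ as a finite orthogonal sum of pieces in $\mathcal{P}_{\bm m'}(Z)$ with $\bm m'$ obtained from $\bm m$ by raising one entry by one, and comparing the Fischer-Fock norms via the product formula (\ref{pola}) reduces the boundedness of $M_{z_i}$ to uniform bounds on the ratios $(\lambda)_{\bm m}/(\lambda)_{\bm m'},$ which are straightforward from the explicit product form; this is precisely the analysis carried out in \cite[Theorem 4.1]{AZ03}. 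Conversely, when $\lambda=(j-1)\frac{a}{2}\in W_{\Omega,d},$ the space $H^2_\lambda(\Omega)=\bigoplus_{\bm m:m_j=0}\mathcal{P}_{\bm m}(Z)$ omits every isotypic component with $m_j\geq 1.$ For $j=1$ this is just $H^2_0(\Omega)=\mathbb{C},$ and $z_i\cdot 1=z_i\notin\mathbb{C}.$ For $j\geq 2$ I would take $p\in\mathcal{P}_{(1,\ldots,1,0,\ldots,0)}(Z)$ with $j-1$ ones and observe that the Pieri expansion of $z_i\cdot p$ contains a nonzero component in $\mathcal{P}_{(1,\ldots,1,1,0,\ldots,0)}(Z)$ with $j$ ones, which has $m_j=1$ and therefore lies outside $H^2_\lambda(\Omega).$ Thus the coordinate multipliers fail to preserve $H^2_\lambda(\Omega)$ in the discrete case, and cannot give a Hilbert-module structure over $\mathcal{P}(Z).$

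For the spectral identity $Sp(T)=\bar\Omega$ in the continuous case, the inclusion $\bar\Omega\subset Sp(T)$ is immediate: for each $z\in\Omega,$ the reproducing kernel $K_{\lambda,z}=\Delta(\cdot,z)^{-\lambda}$ lies in $H^2_\lambda(\Omega)$ and the reproducing property yields $T_i^*K_{\lambda,z}=\bar z_i K_{\lambda,z},$ so $\bar z$ is a joint eigenvalue of $T^*,$ the top cohomology of the Koszul complex of $T-z$ is nonzero, and $z\in Sp(T);$ closedness of the Taylor spectrum then delivers $\bar\Omega\subset Sp(T).$ The genuine obstacle is the reverse inclusion $Sp(T)\subset\bar\Omega.$ My plan here is to exploit that $\bar\Omega$ is convex and therefore polynomially convex, with a Stein neighborhood basis: for every $z\notin\bar\Omega$ the Cartan-type division problem $\sum_i(w_i-z_i)q_i(w)=1$ has a solution with $q_i\in\mathcal{O}(\bar\Omega),$ produced on a pseudoconvex neighborhood of $\bar\Omega.$ Applying the Taylor functional calculus (the analogue of Lemma \ref{fhtt} for $H^2_\lambda(\Omega)$) yields $\sum_i(T_i-z_i)q_i(T)=I,$ so that the last differential of the Koszul complex of $T-z$ is surjective; the analogous holomorphic Koszul resolution on the Stein neighborhood, transported by functional calculus, handles exactness at intermediate degrees. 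An alternative route is to first establish a von Neumann-type inequality $\|p(T)\|\leq C_\lambda\sup_{\bar\Omega}|p|$ on polynomials by direct Peter-Schmid-Weyl estimates exploiting the absolute convergence in (\ref{dkl}), and then invoke polynomial convexity of $\bar\Omega$ to conclude $Sp(T)\subset\bar\Omega.$ Either route reduces the whole lemma to the multiplier-norm estimates worked out in detail in \cite[Theorem 4.1]{AZ03}.
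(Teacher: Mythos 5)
Your proposal is correct and, for the continuous part, follows the paper in citing \cite[Theorem 4.1]{AZ03} for both the boundedness of the coordinate multipliers and the identity $Sp(T)=\bar\Omega$; the extra spectral sketches you add are not in the paper. The genuine difference is the discrete case. The paper's argument is much shorter and requires no branching rule at all: if $H^2_{\lambda_0}(\Omega)$ with $\lambda_0=\frac{a}{2}(j_0-1)$ were a $\mathcal{P}(Z)$-module under multiplication, then since $\bm 1\in H^2_{\lambda_0}(\Omega)$ one would have $\mathcal{P}(Z)=\mathcal{P}(Z)\cdot\bm 1\subset H^2_{\lambda_0}(\Omega)$, which is impossible because $H^2_{\lambda_0}(\Omega)=\sum_{\bm m\geq 0,\,m_{j_0}=0}\mathcal{P}_{\bm m}(Z)$ is a proper direct summand of $\mathcal{P}(Z)=\sum_{\bm m\geq 0}\mathcal{P}_{\bm m}(Z)$. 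Your Pieri-rule argument accomplishes the same thing but has to establish that the $\mathcal{P}_{(1^{j})}$ component of $z_i\cdot\mathcal{P}_{(1^{j-1})}$ is actually nonzero for some $i$, which is genuine representation-theoretic input (a Laplace-type expansion of the $j$-th order minors) that the paper's constant-function trick sidesteps. Your $j=1$ case ($H^2_0=\mathbb{C}$, $z_i\cdot 1\notin\mathbb{C}$) is in effect the paper's argument specialized to one partition.

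One caution on your first proposed route to $Sp(T)\subset\bar\Omega$: you suggest applying ``the analogue of Lemma \ref{fhtt}'' to produce $q_i(T)$ with $q_i\in\mathcal{O}(\bar\Omega)$, but the integral formula in Lemma \ref{fhtt} is stated under the hypothesis $Sp(T)\subset\Omega$, which is precisely what is not yet known (and in fact fails, since $Sp(T)=\bar\Omega$). That route as written is circular; you would need instead to expand the $q_i$ into homogeneous polynomials and show operator-norm convergence of $\sum_{\bm m}q_{i,\bm m}(T)$ directly, which is essentially your second route (a von Neumann--type inequality plus polynomial convexity of $\bar\Omega$). Since both you and the paper ultimately delegate the continuous case to \cite[Theorem 4.1]{AZ03}, this does not affect the correctness of the proof, but the first sketch should not be presented as a self-contained alternative.
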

\begin{proof} By \cite[Theorem 4.1]{AZ03}  the function space $(H^2_\lambda(\Omega), T_z)$  with the coordinate function multipliers is always a Hilbert module over $\mathcal{P}(Z)$  and the Taylor spectrum  $Sp(T_z)=\bar{\Omega}$ whenever  $\lambda\in W_{\Omega,c}.$  Thus it is sufficient to prove that $H^2_\lambda(\Omega)$ is  not a Hilbert module over $\mathcal{P}(Z)$ by the  coordinate function multipliers  if $\lambda\in W_{\Omega,d}.$
Suppose it was false. Then we could find $\lambda_0 =\frac{a}{2}(j_0-1)$ for  some $1\leq j_0\leq r$ such that $H^2_{\lambda_0}(\Omega)$ is a Hilbert module over $\mathcal{P}(Z)$ by the  coordinate function multipliers.  By the construction of $H^2_{\lambda_0}(\Omega),$ we know that $$H^2_{\lambda_0}(\Omega)=\sum_{\bm{m}\geq0,m_{j_0}=0} \mathcal{P}_{\bm{m}}(Z).$$ Since the constant function  $ \bm{1}\in H^2_{\lambda_0}(\Omega),$  it follows that  $$ \mathcal{P}(Z)\cdot \bm{1}=\mathcal{P}(Z)\subset H^2_{\lambda_0}(\Omega),$$  which contradicts $\mathcal{P}(Z)=\sum_{\bm{m}\geq0} \mathcal{P}_{\bm{m}}(Z),$ and the proof is finished.
\end{proof}

 In Section \ref{seon}, we define an $\mathcal{O}(\bar{\Omega})$-module action by the function multipliers on the weighted Bergman spaces $H^2_{N+\gamma}(\Omega)=A^2(dv_\gamma),\gamma>-1.$ Notice that $$N+\gamma>N-1>(r-1)\frac{a}{2}.$$

 Despite the aforementioned difficulties, we can still define an $\mathcal{O}(\bar{\Omega})$-module action by the function multipliers on $H^2_\lambda(\Omega)$ for all $\lambda\in W_{\Omega,c}=\{\lambda>(r-1)\frac{a}{2}\}.$ To cover the gap $\{ (r-1)\frac{a}{2} <\lambda\leq N-1\},$ we need the following embedding lemma between two function spaces.  To make its proof more precise, we introduce the following notations. Let $I_\Omega$ be the set of all integer partitions with length $r.$ Define
 \begin{equation}\begin{split}
 &I_\Omega(0)=\{\bm{0}\},\notag\\
 &I_\Omega(j)=\{(m_1,\cdots,m_j,0,\cdots,0):m_1\geq\cdots\geq m_j>0\},1\leq j\leq r-1,\\
 &I_\Omega(r)=\{(m_1,\cdots,m_r):m_1\geq\cdots\geq m_r>0\}.
 \end{split}\end{equation}
 It is obvious that \begin{equation}\label{}I_\Omega=\bigcup_{j=0}^r I_\Omega(j)\quad \text{and} \quad I_\Omega(i)\cap I_\Omega(j)=\emptyset\notag\\\end{equation} whenever $i\neq j.$
 \begin{lem}\label{imbe} Suppose $ (r-1)\frac{a}{2}< \lambda_1< \lambda_2,$ then the imbedding $$ i: H^2_{\lambda_1}(\Omega) \rightarrow H^2_{\lambda_2}(\Omega),\quad f\mapsto i(f)=f$$ is continuous. 
 \end{lem}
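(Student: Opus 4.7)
The plan is to compute both norms explicitly on the Peter--Schmid--Weyl pieces and compare the ratios of the reciprocal Pochhammer symbols that appear as the weights. This reduces the continuity of the inclusion to a bounded-ratio estimate that follows directly from formula (\ref{pola}) and the fact that both parameters exceed $(r-1)\frac{a}{2}$.

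More concretely, first I would write any $f\in H^2_{\lambda_1}(\Omega)$ in its Peter--Schmid--Weyl decomposition $f=\sum_{\bm{m}\geq 0}f_{\bm{m}}$ with $f_{\bm{m}}\in \mathcal{P}_{\bm{m}}(Z)$. The inner product formula (\ref{pqin}) gives, after summing over partitions and using the orthogonality of the pieces under $\langle\cdot,\cdot\rangle_F$, that
\begin{equation*}
\|f\|_{\lambda}^2=\sum_{\bm{m}\geq 0}\frac{\|f_{\bm{m}}\|_F^2}{(\lambda)_{\bm{m}}},
\end{equation*}
valid for every $\lambda\in W_{\Omega,c}$. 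Thus proving continuity of the inclusion amounts to showing that there exists $C>0$ with
\begin{equation*}
\frac{(\lambda_1)_{\bm{m}}}{(\lambda_2)_{\bm{m}}}\leq C
\end{equation*}
for all integer partitions $\bm{m}\geq 0$, because then term-by-term comparison yields $\|f\|_{\lambda_2}^2\leq C\|f\|_{\lambda_1}^2$ and in particular $f\in H^2_{\lambda_2}(\Omega)$.

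Second, I would invoke the factorization (\ref{pola}), namely
\begin{equation*}
(\lambda)_{\bm{m}}=\prod_{j=1}^{r}\left(\lambda-\tfrac{a}{2}(j-1)\right)_{m_j},
\end{equation*}
to reduce the global ratio to $r$ ratios of one-variable Pochhammer symbols. Since $\lambda_1,\lambda_2>(r-1)\tfrac{a}{2}$, every factor $\lambda_i-\tfrac{a}{2}(j-1)$ is strictly positive for $1\leq j\leq r$, so each one-variable Pochhammer symbol is a product of positive real numbers and the ratio is honestly defined. Writing $\alpha_j=\lambda_1-\tfrac{a}{2}(j-1)$ and $\beta_j=\lambda_2-\tfrac{a}{2}(j-1)$, we have $0<\alpha_j<\beta_j$, hence
\begin{equation*}
\frac{(\alpha_j)_{m_j}}{(\beta_j)_{m_j}}=\prod_{k=0}^{m_j-1}\frac{\alpha_j+k}{\beta_j+k}\leq 1
\end{equation*}
for every $m_j\geq 0$. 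Multiplying over $j=1,\dots,r$ gives $(\lambda_1)_{\bm{m}}/(\lambda_2)_{\bm{m}}\leq 1$ uniformly in $\bm{m}$, so in fact one may take $C=1$ and the inclusion is a contraction.

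There is essentially no obstacle here beyond recognizing the right formulas: the strict lower bound $\lambda_i>(r-1)\tfrac{a}{2}$ is exactly what keeps all denominators positive so that (\ref{pola}) behaves monotonically in $\lambda$, and without it the argument would break down at the last component $j=r$. Once this point is noted, the continuity is immediate from the termwise comparison of the weighted $\ell^2$-expressions for $\|f\|_{\lambda_1}^2$ and $\|f\|_{\lambda_2}^2$.
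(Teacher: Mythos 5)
Your proof is correct, and it is in fact cleaner and quantitatively sharper than the one in the paper. Both proofs begin the same way: decompose $f$ via Peter--Schmid--Weyl, use formula (\ref{pqin}) to write $\|f\|_{\lambda}^2 = \sum_{\bm{m}\geq 0}\|f_{\bm{m}}\|_F^2/(\lambda)_{\bm{m}}$, and reduce continuity of the inclusion to a uniform bound on the ratio $(\lambda_1)_{\bm{m}}/(\lambda_2)_{\bm{m}}$. From that point the paper passes through the Gindikin Gamma function representation of the multivariable Pochhammer symbol and then invokes Stirling's formula together with a case analysis on the sets $I_\Omega(j)$ to control the ratio; this is somewhat indirect and has to separately absorb the constant $\Gamma_\Omega(\lambda_2)/\Gamma_\Omega(\lambda_1)$ and track the behavior of the $m_j^{\lambda_1-\lambda_2}$ factors. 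You instead apply the factorization (\ref{pola}) directly and observe that for each $j$, the hypothesis $\lambda_1>(r-1)\frac{a}{2}$ makes $\alpha_j=\lambda_1-\frac{a}{2}(j-1)$ and $\beta_j=\lambda_2-\frac{a}{2}(j-1)$ positive with $\alpha_j<\beta_j$, so that every factor $(\alpha_j+k)/(\beta_j+k)$ in the one-variable Pochhammer ratio is at most $1$. This termwise monotonicity gives the uniform bound immediately and with the optimal constant $C=1$: the imbedding is actually a contraction, a sharper conclusion than what the paper's argument yields. Your remark that $\lambda_1>(r-1)\frac{a}{2}$ is exactly the hypothesis needed to keep the $j=r$ factor positive is the correct identification of where the continuous Wallach set condition enters.
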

\begin{proof}
Let $f \in H^2_{\lambda}(\Omega)$ for $\lambda\in W_{\Omega,c}$ and $f=\sum_{\bm{m}\geq0} f_{\bm{m}} $ be its Peter-Schmid-Weyl decomposition.  Then $$\Vert f\Vert_\lambda=\sum_{\bm{m}\geq0} \Vert f_{\bm{m}}\Vert_\lambda.$$
Hence it suffices to prove that there exists a uniform positive constant $C$ satisfying
\begin{equation}\label{}
\Vert p_{\bm{m}}\Vert_{\lambda_2}\leq C\Vert p_{\bm{m}}\Vert_{\lambda_1}\notag\\
\end{equation}
for all $ p_{\bm{m}}\in \mathcal{P}_{\bm{m}}$ and $\bm{m}\geq0.$ Combining this with the formula (\ref{pqin}),  it suffices to prove that there exists a uniform positive constant $C$ satisfying \begin{equation}\label{laco}\frac{(\lambda_1)_{\bm{m}}}{(\lambda_2)_{\bm{m}}}\leq C\end{equation} for every  $\bm{m}\geq0.$
 By the definition of the multi-variable Pochhammer symbol, we obtain
  \begin{equation}\label{poc}
  \frac{(\lambda_1)_{\bm{m}}}{(\lambda_2)_{\bm{m}}}=\frac{\Gamma_\Omega(\lambda_1+\bm{m})}{\Gamma_\Omega(\lambda_2+\bm{m})}
                                        =\prod_{j=1}^r\frac{\Gamma(\lambda_1+m_j-(j-1)\frac{a}{2})}{\Gamma(\lambda_2+m_j-(j-1)\frac{a}{2})}.\\
 \end{equation}
   It is easy to see that \begin{equation}\label{p0}\frac{(\lambda_1)_{\bm{0}}}{(\lambda_2)_{\bm{0}}}=\prod_{j=1}^r\frac{\Gamma(\lambda_1-(j-1)\frac{a}{2})}{\Gamma(\lambda_2-(j-1)\frac{a}{2})}.\end{equation}
   By Stirling's formula,  there exists an uniform positive constant $C_j$ satisfying$$ \frac{\Gamma(\lambda_1+m_j-(j-1)\frac{a}{2})}{\Gamma(\lambda_2+m_j-(j-1)\frac{a}{2})}\leq C_j\frac{1}{m_j^{\lambda_2-\lambda_1}},$$ for $j=1,\cdots,r.$
   Then (\ref{poc}) implies that \begin{equation}\begin{split}\label{p1}
  \frac{(\lambda_1)_{\bm{m}}}{(\lambda_2)_{\bm{m}}}
                                        &\leq\prod_{j=1}^k C_j\prod_{j=k+1}^r \frac{\Gamma(\lambda_1-(j-1)\frac{a}{2})}{\Gamma(\lambda_2-(j-1)\frac{a}{2})}\prod_{j=1}^k\frac{1}{m_j^{\lambda_2-\lambda_1}}\\
                                         &\leq\prod_{j=1}^k C_j\prod_{j=k+1}^r \frac{\Gamma(\lambda_1-(j-1)\frac{a}{2})}{\Gamma(\lambda_2-(j-1)\frac{a}{2})}
     \end{split}\end{equation}
     for every $\bm{m}\in I_\Omega(k), 1\leq k\leq r-1.$  Similarly, \begin{equation}\begin{split}\label{p2}
  \frac{(\lambda_1)_{\bm{m}}}{(\lambda_2)_{\bm{m}}}
                                        & \leq \prod_{j=1}^r C_j \\
     \end{split}\end{equation}
 for every $\bm{m}\in I_\Omega(r). $  Thus (\ref{laco}) follows by (\ref{p0}), (\ref{p1}) and (\ref{p2}).  This completes the proof.
 \end{proof}
Lemma \ref{imbe} generalizes the classical fact that the Hardy space is a subspace of the Bergman space on the unit ball of rank one.  
\begin{lem}\label{Hoo} Let $\lambda\in W_{\Omega,c},$   then $ H^2_\lambda(\Omega)$ is an $\mathcal{O}(\bar{\Omega})$-module whose module actions are  the multiplication of functions. In particular, $\mathcal{O}(\bar{\Omega})\subset H^2_\lambda(\Omega)$ is dense.
\end{lem}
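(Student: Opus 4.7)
The plan is to bootstrap from the polynomial module structure on $H^2_\lambda(\Omega)$ given by Lemma \ref{Wal} to an $\mathcal{O}(\bar\Omega)$-module structure, via the Taylor functional calculus for the coordinate multiplier tuple.

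First I would record that by Lemma \ref{Wal}, for $\lambda\in W_{\Omega,c}$ the coordinate multiplier tuple $T=(M_{z_1},\dots,M_{z_n})$ is a commuting tuple of bounded operators on $H^2_\lambda(\Omega)$ with $Sp(T)=\bar{\Omega}$. Consequently $\mathcal{O}(\bar\Omega)=\mathcal{O}(Sp(T))$, and the Taylor functional calculus supplies a continuous unital algebra homomorphism
\[
\Phi:\mathcal{O}(\bar\Omega)\longrightarrow B(H^2_\lambda(\Omega)),\quad f\mapsto f(T),
\]
the continuity being with respect to uniform convergence on open neighborhoods of $\bar{\Omega}$: if $f_k\to f$ uniformly on some open set $U\supset\bar{\Omega}$, then $f_k(T)\to f(T)$ in operator norm.

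The key step is then to identify $\Phi(f)$ with pointwise multiplication $M_f$. Given $f\in\mathcal{O}(\bar\Omega)$, choose $\delta>0$ so that $(1+\delta)\bar\Omega$ lies inside the holomorphy domain of $f$; because $(1+\delta)\Omega$ is a circular domain containing the origin, $f$ admits a Peter--Schmid--Weyl homogeneous expansion $f=\sum_{\bm{m}\geq 0}f_{\bm{m}}$ that converges uniformly on a neighborhood of $\bar\Omega$. Writing $p_k$ for the partial sums, continuity of $\Phi$ gives $p_k(T)\to f(T)$ in operator norm. Since $p_k(T)=M_{p_k}$ is honest polynomial multiplication, for every $h\in H^2_\lambda(\Omega)$ the sequence $p_k h$ converges to $f(T)h$ in $H^2_\lambda(\Omega)$. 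Because $H^2_\lambda(\Omega)$ is a reproducing kernel Hilbert space, point evaluations are continuous, so for every $z\in\Omega$
\[
(f(T)h)(z)=\lim_{k\to\infty}p_k(z)h(z)=f(z)h(z).
\]
Hence $fh=f(T)h\in H^2_\lambda(\Omega)$ and $M_f=f(T)$ is bounded, which shows that $H^2_\lambda(\Omega)$ is an $\mathcal{O}(\bar\Omega)$-module under pointwise multiplication.

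The remaining claims follow almost for free. Specializing $h=\bm{1}\in\mathcal{P}_{\bm{0}}(Z)\subset H^2_\lambda(\Omega)$ yields $f=M_f\bm{1}\in H^2_\lambda(\Omega)$, so $\mathcal{O}(\bar\Omega)\subset H^2_\lambda(\Omega)$; density is immediate because $\mathcal{P}(Z)\subset\mathcal{O}(\bar\Omega)$ and $\mathcal{P}(Z)=\sum_{\bm{m}\geq 0}\mathcal{P}_{\bm{m}}(Z)$ is already dense in $H^2_\lambda(\Omega)$ by construction. The main technical point I would need to handle carefully is the continuity of the Taylor functional calculus on $\mathcal{O}(Sp(T))$ with respect to the inductive-limit topology, a standard property of Taylor's construction that in the present setting can also be extracted from a suitably rescaled version of the integral formula in Lemma \ref{fhtt}.
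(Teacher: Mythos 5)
Your proof is correct and reaches the same conclusion, but it diverges from the paper in the way the identity $f(T)h = fh$ is established. Both arguments begin by invoking Lemma \ref{Wal} to obtain $Sp(T)=\bar\Omega$ and then expand $f\in\mathcal{O}(\bar\Omega)$ in a homogeneous polynomial series $\sum_j f_j$ that converges uniformly on a neighborhood of $\bar\Omega$, applying the continuity of the Taylor functional calculus to conclude $\sum_{j\le k}f_j h\to f(T)h$ in $H^2_\lambda(\Omega)$. At that point the paper identifies the limit with $fh$ by routing through the continuous embedding $H^2_\lambda(\Omega)\hookrightarrow H^2_N(\Omega)=A^2(dv)$ furnished by Lemma \ref{imbe}, and then using the integral form of the $A^2(dv)$-norm to recognize the limit of the partial products as $fh$; that is also why the paper splits off the case $\lambda>N-1$ as trivial. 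You instead pass directly through continuity of point evaluations in the reproducing kernel space, getting $(f(T)h)(z)=\lim_k p_k(z)h(z)=f(z)h(z)$ for all $z\in\Omega$; this entirely bypasses Lemma \ref{imbe} and handles all $\lambda\in W_{\Omega,c}$ uniformly. Notably, the paper itself records your alternative in Remark 5.4 (``Actually \eqref{evaa} can be also obtained directly from \eqref{eva} by the continuity of the evaluation functional''), so your route is a legitimate and, if anything, slightly leaner variant of the intended argument. One small caveat: for the step to go through you only need convergence $p_k(T)h\to f(T)h$ for each fixed $h$, so claiming strong operator convergence (as the paper does) is enough; the stronger operator-norm convergence you invoke is true by continuity of the Taylor calculus in the inductive-limit topology, but it is not needed.
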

\begin{proof} The case of $\lambda>N-1$ is clear, since in this case every $ H^2_\lambda(\Omega)$ is a weighted Bergman space whose inner product is induced by a finite volume measures on $\Omega.$ Thus it suffices to consider the case of $\lambda\in \{ (r-1)\frac{a}{2} <\lambda\leq N-1\}.$ Since the Taylor spectrum $Sp(T_z)=\bar{\Omega}$ by Lemma \ref{Wal}, it follows that we can define an $\mathcal{O}(\bar{\Omega})$-module  action on $H^2_\lambda(\Omega)$ by Taylor functional calculus, namely   \begin{equation}\label{Tafh}f\cdot h:= f(T_z)h\end{equation} for every  $f\in\mathcal{O}(\bar{\Omega})$ and $h\in H^2_\lambda(\Omega).$
Since $f\in \mathcal{O}(\bar{\Omega}),$ there exists an open set $D$ which contains the compact set $\bar{\Omega}$ such that $f$ is holomorphic on $D.$ It follows from (\ref{hop}) that  there exists a $\delta>0$ such that  $$\Omega\subsetneq (1+\delta)\Omega\subsetneq (1+\delta)\bar{\Omega}\subsetneq D$$ and $f\in \mathcal{O}(D)\subset  \mathcal{O}((1+\delta)\Omega)$ has a homogeneous polynomial expansion \begin{equation}\label{} f(z)=\sum_{i=0}^\infty f_i(z)\notag\\\end{equation} on the domain $(1+\delta)\Omega $ where each $f_i$ is $i$-homogeneous holomorphic polynomial, which converges uniformly on $(1+\frac{\delta}{2})\Omega.$
Thus $f(T_z)=\lim_i \sum_{j=0}^i f_j(T_z)$ in the strong operator topology  by the Taylor functional calculus, which yields  \begin{equation}\label{eva}f\cdot h= f(T_z)h=\lim_i \sum_{j=0}^if_j(T_z)h=\lim_i \sum_{j=0}^if_jh\end{equation}
for every  $h\in H^2_\lambda(\Omega).$ On the other hand, it follows from Lemma \ref{imbe}  that the imbedding $H^2_\lambda(\Omega)\subset H^2_N(\Omega)=A^2(dv)$ is continuous  because $(r-1)\frac{a}{2} <\lambda< N.$  Then $\sum_{j=0}^if_jh$ is convergent to $f\cdot h$ in $H^2_N(\Omega).$ Since  $f=\lim_{i} \sum_{j=0}^i f_j$ converges uniformly on $(1+\frac{\delta}{2})\Omega,$ it follows that $\sum_{j=0}^if_jh$ converges to $fh$ in $H^2_N(\Omega)$ by the integral formula of the norm with respect to the measure $dv,$ and hence \begin{equation}\label{evaa}f\cdot h=\lim_i \sum_{j=0}^if_jh=fh.\end{equation} This implies that we can define an $\mathcal{O}(\bar{\Omega})$-module action by the multiplication of functions, which coincides with the $\mathcal{O}(\bar{\Omega})$-module action  (\ref{Tafh}) defined by the Taylor functional calculus.
\end{proof}
\begin{rem} Actually  (\ref{evaa}) can be also obtained directly from (\ref{eva}) by the continuity of the evaluation functional. On the other hand, we know from \cite[Theorem 3.3]{Arv98} that  $\mathcal{A}(\Omega)=C(\bar{\Omega})\cap\mathcal{O}(\Omega)$  is not always contained in $H^2_\lambda(\Omega),\lambda\in W_{\Omega,c}$ in general.
\end{rem}
As a consequence, we can generalize Proposition \ref{CO} to the following form.
\begin{cor}\label{COl} Suppose  $M$ is a closed set of $H_\lambda^2(\Omega)$ for some  $\lambda\in W_{\Omega,c},$ then the following hold:
\begin{enumerate}
\item  $M$ is a $\mathbb{C}[z]$-module if and only if $M$ is an $\mathcal{O}(\bar{\Omega})$-module.
\item Moreover,  $M^\bot$ is a quotient submodule over the algebra $\mathbb{C}[z]$ if and only if $M^\bot$ is a quotient submodule over the algebra  $\mathcal{O}(\bar{\Omega}).$
\end{enumerate}
\end{cor}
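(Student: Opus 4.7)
The plan is to adapt the proof of Proposition \ref{CO} by using Lemma \ref{Hoo} as the crucial bridge, since the natural inner product on $H_\lambda^2(\Omega)$ for $\lambda \in ((r-1)\tfrac{a}{2}, N-1]$ is not given by integration against a finite measure and so the argument from Proposition \ref{CO} does not apply directly. As in the earlier proposition, the sufficiency direction of both (1) and (2) is automatic from the inclusion $\mathbb{C}[\bm{z}] \subset \mathcal{O}(\bar{\Omega})$, so I only have to treat the necessity direction.

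For the necessity direction of (1), I would take $M$ a closed $\mathbb{C}[\bm{z}]$-submodule of $H_\lambda^2(\Omega)$, fix $f \in \mathcal{O}(\bar{\Omega})$ and $g \in M$, and aim to show $f \cdot g \in M$. Because every bounded symmetric domain has a Stein neighborhood basis (as noted in Remark \ref{Steh}), $f$ extends holomorphically to $(1+\delta)\bar{\Omega}$ for some $\delta > 0$; on the circular domain $(1+\delta)\Omega$ it has a homogeneous polynomial expansion $f = \sum_{i=0}^\infty f_i$ converging uniformly on a slightly smaller circular neighborhood of $\bar{\Omega}$, just as in equation (\ref{hop}).

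The key step is then to transfer this uniform convergence to convergence in the norm of $H_\lambda^2(\Omega)$ of the sequence $\sum_{j=0}^i f_j g$. This is exactly what Lemma \ref{Hoo} provides: by Lemma \ref{Wal} the Taylor spectrum of the coordinate tuple $T$ on $H_\lambda^2(\Omega)$ equals $\bar{\Omega}$, so the Taylor functional calculus supplies a bounded operator $f(T)$ and the identity (\ref{eva}) yields
\[
f \cdot g \;=\; f(T)g \;=\; \lim_{i \to \infty} \sum_{j=0}^{i} f_j(T)g \;=\; \lim_{i \to \infty} \sum_{j=0}^{i} f_j g,
\]
with convergence in $H_\lambda^2(\Omega)$. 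Since each partial sum lies in $M$ (as $M$ is a $\mathbb{C}[\bm{z}]$-module and each $f_j$ is a polynomial) and $M$ is closed, the limit $f \cdot g$ belongs to $M$. This yields the $\mathcal{O}(\bar{\Omega})$-module structure on $M$.

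For (2), the statement reduces immediately to (1): $M^\bot$ is a quotient $\mathbb{C}[\bm{z}]$-submodule iff $M$ is a $\mathbb{C}[\bm{z}]$-submodule, and likewise over $\mathcal{O}(\bar{\Omega})$. The only real obstacle in the whole argument is controlling the convergence of $\sum_{j=0}^i f_j g$ in $H_\lambda^2(\Omega)$ when $\lambda$ lies in the gap $((r-1)\tfrac{a}{2}, N-1]$; this is precisely where the Taylor-calculus identification in Lemma \ref{Hoo}, together with the strong operator convergence $\sum_{j=0}^i f_j(T) \to f(T)$ coming from the uniform convergence of the homogeneous expansion on a neighborhood of $\bar{\Omega} \supset Sp(T)$, does the work that integration did in Proposition \ref{CO}.
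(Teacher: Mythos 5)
Your argument is correct and matches the route the paper has in mind: the statement is presented as an immediate consequence of Lemma \ref{Hoo}, and the step you isolate --- using $Sp(T)=\bar{\Omega}$ (Lemma \ref{Wal}) and the Taylor functional calculus to get $f\cdot g = f(T)g = \lim_i \sum_{j=0}^i f_j g$ with convergence in the $H^2_\lambda(\Omega)$ norm, then invoking closedness of $M$ --- is precisely the mechanism that the proof of Lemma \ref{Hoo} (equation (\ref{eva})) supplies in place of the measure-theoretic limit (\ref{fglm}) used in Proposition \ref{CO}. The reduction of part (2) to part (1) via orthogonal complements is likewise the same as in the weighted-Bergman case.
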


Thus all concepts we defined in Section \ref{seon} for the weighted Bergman modules can be generalized to the current situation of the Hilbert modules $H_\lambda^2(\Omega)$ for $(r-1)\frac{a}{2}<\lambda\leq N-1,$ except the pullback of measures. It is an interesting question to construct integral formulas with suitable probability measures $dv_\lambda$  for the $K$-invariant inner products associated with the Wallach points; we refer the reader to \cite{AZ03, Upm96}. It is known that the $K$-invariant inner product on $H_\lambda^2(\Omega)$ for $\lambda\in W_{\Omega,c}$  can be realized as the integration with a unique probability measure $dv_\lambda$ supported inside $\bar{\Omega}$ if and only if $$\lambda\in \{\lambda_j=N-1-(j-1)\frac{a}{2},j=1,\cdots,r\}\cup\{\lambda>N-1\}.$$ 
Hence in the spacial case of $\lambda=\lambda_j$ we can still pull back  the measures $dv_{\lambda_j},j=1\cdots,r.$
However, in general, we will directly pull back the inner product rather than the measure as follows.
Let $\phi: \Omega\rightarrow \Omega $ be an automorphism. The pullback of functions  is given by $$\phi^*:\mathcal{O}(\Omega)\rightarrow\mathcal{O}(\Omega),\quad f\mapsto \phi^*(f)= f\circ\phi,$$
which is a complex algebraic (module) homomorphism and especially a linear operator.  For $H_\lambda^2(\Omega), \lambda\in W_{\Omega,c},$ its pullback under $\phi$ is denoted by $\phi^*(H_\lambda^2(\Omega)),$ we equip with it an inner product  by
 \begin{equation}\label{puin}\langle \phi^*(f),\phi^*(g)\rangle_{\lambda,\phi^*}:=\langle f,g\rangle_{\lambda}\end{equation} whenever $f,g\in H_\lambda^2(\Omega),$ which is called the pullback of the inner product. By direct check, we deduce that the inner product (\ref{puin}) is $K$-invariant and $\phi^*(H_\lambda^2(\Omega))$ is completed in this inner product, which coincides with the integration with respect to the pull backed measure whenever the inner product of $H_\lambda^2(\Omega)$  can be realized as the integration with respect to a  probability measure. Thus $\phi^*:H_\lambda^2(\Omega)\rightarrow\phi^*(H_\lambda^2(\Omega))$ is a unitary operator. Furthermore, if $M$ is an $\mathcal{O}(\bar{\Omega})$-submodule in $H_\lambda^2(\Omega)$ determined by an  analytic variety, then its pull back $\phi^*(M)$ is also an $\mathcal{O}(\bar{\Omega})$-submodule in $\phi^*(H_\lambda^2(\Omega))$ determined by  the pullback of the  analytic variety. The similar is also true for the quotient submodule.

 Now we list the result of the biholomorphic invariance of $p$-essential normality of quotient  Hilbert submodules in  $H_\lambda^2(\Omega)$ for $\lambda\in W_{\Omega,c},$  the core case of weighted Bergman modules of $\lambda>N-1$ is our main  Theorem 2 above. Now, with the preceding effort, the case of $(r-1)\frac{a}{2}<\lambda \leq N-1$
can be proved by the similar method to the case of weighted Bergman modules and we left this proof to the reader.

\begin{thm}\label{hwaa} Let $M$ be a  submodule in $H_\lambda^2(\Omega),\lambda\in W_{\Omega,c}$ determined by an  analytic variety.  Suppose that $\phi: (\Omega,w)\rightarrow (\Omega,z) $ is an arbitrary automorphism with $z=\phi(w)$, then the following hold:
 \begin{enumerate}
\item  The $\mathbb{C}[w]$-module $(\phi^*(M^\bot),S_w)$ is $p$-essentially normal if and only if the $\mathbb{C}[w]$-module  $(M^\bot,S_z)$  is $p$-essentially normal.
\item    The following two statements are equivalent.
\begin{enumerate}
\item $[S_{w_i},S_{w_j}^*]\in \mathcal{L}^p(M^\bot)$ for all $i,j=1,\cdots,n.$
\item $[S_{\phi_i},S_{\phi_j}^*]\in\mathcal{L}^p(M^\bot)$ for all $i,j=1,\cdots,n.$
\end{enumerate}
\end{enumerate}
\end{thm}

   \begin{rem}\label{beublaa}
   (1)  Since the unitary coordinate transformations correspond to the isometrically algebraic isomorphisms of the universal operator algebras \cite[Theorem 8.2]{DRS11}, Theorem \ref{hwaa}\hspace{0.1em}(1) can be viewed as an extension of \cite[Theorem 2.1]{KS12} for various Hilbert modules on bounded symmetric domains; however, the method we used here is different from theirs.

 (2) Let $\theta$ be a nonconstant singular inner function on $\mathbb{D}=\mathbb{B}^1$ and $M_\theta$ be the multiplication operator with symbol $\theta$ on Hardy space $H^2(\mathbb{D}).$ Clearly, $M_\theta$ coincides with  the Toeplitz operator $T_\theta.$  By the compactness characterization of  semi-commutators of Toeplitz operators, we conclude that
$[M_\theta,M_\theta^\ast]=T_\theta T_{\bar\theta}-T_{\theta\bar{\theta}}$ is not compact on  $H^2(\mathbb{D}).$  Thus the Hilbert module $(H^2(\mathbb{D}), M_\theta)$ is not $\infty$-essentially normal. On the other hand,  we can equip $H^2(\mathbb{D})$ with a natural  $H^\infty(\mathbb{D})$-module structure by the multiplication of functions, where $H^\infty(\mathbb{D})$ is the set of bounded holomorphic functions on $\mathbb{D}.$ Clearly, we have $\mathbb{C}[z]\subset\mathcal{O}(\bar{\mathbb{D}})\subset H^\infty(\mathbb{D}).$  The above shows that the larger function algebra $H^\infty(\mathbb{D})$ does not preserve the  $\infty$-essential normality of  $(H^2(\mathbb{D}), M_z).$

 (3) 
 When the  Taylor spectrum of the  compression $n$-tuple $S_z$ on $M_V^\bot$ satisfies  $Sp(S_z)\subset\bar{\Omega}.$ One can prove Theorem \ref{hwaa} by the alternative operator theoretic method based on Theorem 1 (taking $c=1,\bm{d}=\bm{0}$). 
  However, in the general case, as we mentioned before, the primary obstruction in this operator theoretic method is the  accurate estimate of the Taylor spectrum of  $S.$ 
\end{rem}

Similarly, the same is also true for the submodule $(M, T_z),$ where $T_z=(M_{z_1},\cdots,M_{z_n})$ can be viewed as the Toeplitz operator $n$-tuple with the coordinate functions.  We denote by $T_f$ the Toeplitz operator with the symbol function $f.$ 
\begin{cor}\label{ccor} Let $M$ be a submodule in $H_\lambda^2(\Omega),\lambda\in W_{\Omega,c}$ determined by an  analytic variety.   Suppose that $\phi: (\Omega,w)\rightarrow (\Omega,z) $ is a biholomorphism with $z=\phi(w)$, then  the following hold:\begin{enumerate}
\item  The $\mathbb{C}[w]$-module $(\phi^*(M),T_w)$ is $p$-essentially normal if and only if the  $\mathbb{C}[z]$-module  $(M,T_z)$  is $p$-essentially normal.
\item    The following two statements are equivalent.
\begin{enumerate}
\item $[T_{w_i},T_{w_j}^*]\in \mathcal{L}^p(M)$ for all $i,j=1,\cdots,n.$
\item $[T_{\phi_i},T_{\phi_j}^*]\in\mathcal{L}^p(M)$ for all $i,j=1,\cdots,n.$
\end{enumerate}
\end{enumerate}
\end{cor}


\section{The Taylor spectrum of the compression}

The previous sections have demonstrated the importance of the Taylor spectrum of the compression operator tuple in the investigation of the biholomorphic invariance of $p$-essential normality. Moreover, the calculation of the Taylor spectrum for bounded holomorphic multipliers is particularly an extension of the corresponding corona problem.  Thus in this section, we shall calculate the Taylor spectrum of the compression operator tuple of the analytic Hilbert quotient submodule on bounded symmetric domains. 

 Let  $T=(T_1,\cdots,T_n)$ be a commuting operator $n$-tuple on a Hilbert space $H.$ 
 Denote by $(T)$ the commutative  Banach subalgebra  generated by $T_1,\cdots,T_n$ and $\text{Id}$ in the Banach operator algebra $B(H).$  Let $\mathcal{B}$ be a  commutative  Banach algebra which contains $T_1,\cdots,T_n$ and $\text{Id}$. Then $T$ is called non-singular relative to $\mathcal{B}$ if the operator equation $$T_1U_1+\cdots+T_nU_n=\text{Id} $$ has a solution for $U_1,\cdots,U_n\in \mathcal{B}.$ The joint spectrum $Sp_{\mathcal{B}}(T)$ of $T$  relative to $\mathcal{B}$ is defined to be the set of all $w\in \mathbb{C}^n$  such that $T-w$ is singular  relative to $\mathcal{B}.$
  It is known that the Taylor spectrum $Sp(T)\subset Sp_{\mathcal{B}}(T)$ for every such Banach algebra $\mathcal{B};$ see \cite{Tay70A}. In particular, \begin{equation}\label{sspt}Sp(T)\subset Sp_{(T)}(T).\end{equation}

 Since  $(\Omega,z)$  is a bounded symmetric domain in $\mathbb{C}^n,$ it is the unit ball in the finite-dimensional normed linear space $(Z,\Vert \cdot\Vert),$ where $Z=\mathbb{C}^n$ is the Jordan triple and   $\Vert \cdot\Vert$ is the spectrum norm. Let  $(M_V^\bot,S_z)$ be a quotient submodule in  $H_\lambda^2(\Omega),\lambda\in W_{\Omega,c}$ determined by an  analytic variety $V.$  As we mentioned in Section 2, we can assume that there exist finitely many $f_1,\cdots,f_m\in  \mathcal{A}(\Omega)$ satisfying $$V=\{z\in\Omega:f_1(z)=\cdots=f_m(z)=0\}.$$ 
 We say that a submodule $M\subset H_\lambda^2(\Omega),\lambda\in W_{\Omega,c}$ is called  finitely generated if there exist finitely many $g_1,\cdots,g_m\in \mathcal{A}(\Omega)$ such that the closure $\text{Cl}[g_1\mathcal{O}(\bar{\Omega})+\cdots+g_m\mathcal{O}(\bar{\Omega})]=M.$ In this case we say that the submodule $M$ is finitely generated by $g_1,\cdots,g_m.$  The notion of finitely generated  $\mathcal{O}(\bar{\Omega})$-module used here can be roughly viewed as the replacement of coherent ideal sheaf of an analytic variety in a complex space in the global case. The main feature of the coherence is locally finitely generated. Moreover, the finitely generated  $\mathcal{O}(\bar{\Omega})$-module can be viewed as an analog of the (approximate) stable divisible $\mathbb{C}[z]$-module defined in \cite{Sha11}. 

 If $V=\{z\in\Omega:f_1(z)=\cdots=f_m(z)=0\}$ for some $f_1,\cdots,f_m\in \mathcal{O}(\bar{\Omega})$ and $ M_V$ is finitely generated by $g_1,\cdots,g_l\in \mathcal{O}(\bar{\Omega}),$ then $V=\{z\in\Omega:g_1(z)=\cdots=g_l(z)=0\},$ since $\mathcal{O}(\bar{\Omega})\subset H_\lambda^2(\Omega),\lambda\in W_{\Omega,c}.$ Thus in the following discussion, we only consider the case that $V=\{z\in\Omega:f_1(z)=\cdots=f_m(z)=0\}$ for some $f_1,\cdots,f_m\in \mathcal{O}(\bar{\Omega})$ and $ M_V$ is finitely generated by $f_1,\cdots,f_m.$

  The following proposition shows that  the Taylor spectrum of the   compression  $S_z$ on the  quotient submodule $M_V^\bot$
 is the closure of the  analytic variety $V$ if $M_V$ is finitely generated by some functions in $\mathcal{O}(\bar{\Omega}).$  Recall that $t\Omega=\{tz:z\in\Omega\}=\{z\in Z: \Vert z\Vert <t\} $ for every $t>0,$ which is biholomorphic to $\Omega=\{z\in Z: \Vert z\Vert <1\} .$   A complex linear functional on a complex commutative Banach algebra is said to be multiplicative if it is also a nonzero algebraic (ring) homomorphism.

 \begin{prop}\label{vspo} Suppose that $V=\{z\in\Omega:f_1(z)=\cdots=f_m(z)=0\}$ is an analytic variety with $f_1,\cdots,f_m\in \mathcal{O}(\bar{\Omega}).$ If $M_V$  is finitely generated by $f_1,\cdots,f_m,$  then the Taylor spectrum of $S_z$  is $$Sp(S_z)=\bar{V}.$$
 \end{prop}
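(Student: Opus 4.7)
The plan is to establish the two inclusions $\bar V\subseteq Sp(S)$ and $Sp(S)\subseteq\bar V$ separately.

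For the lower bound $\bar V\subseteq Sp(S)$ I would use the reproducing kernels of $H_\lambda^2(\Omega)$ at points of $V$ as common eigenvectors of $S^*$. Fix $z\in V$: every $h\in M_V$ satisfies $h(z)=0$, so the reproducing kernel $K_z$ is orthogonal to $M_V$ and hence $K_z\in M_V^\perp$. For any $h\in M_V^\perp$ one computes
\[
\langle S_ih,K_z\rangle_\lambda=\langle P_{M_V^\perp}(z_ih),K_z\rangle_\lambda=(z_ih)(z)=\langle h,\overline{z_i}\,K_z\rangle_\lambda,
\]
so $S_i^*K_z=\overline{z_i}\,K_z$ simultaneously for $i=1,\dots,n$. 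The common eigenvector $K_z\neq 0$ is orthogonal to $\sum_i(S_i-z_i)M_V^\perp$, which is precisely the image of the final boundary operator in the Koszul complex of $S-z$; this complex therefore fails to be exact and $z\in Sp(S)$. Closedness of $Sp(S)$ yields $\bar V\subseteq Sp(S)$.

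For the upper bound $Sp(S)\subseteq\bar V$ the key observation is that every generator acts trivially on the quotient. Given $h\in M_V^\perp\subseteq H_\lambda^2(\Omega)$, the product $f_ih$ lies in $H_\lambda^2(\Omega)$ because $f_i$ is bounded on $\bar\Omega$, and it vanishes on $V$, hence $f_ih\in M_V$. Therefore $S_{f_i}=P_{M_V^\perp}M_{f_i}P_{M_V^\perp}=0$, and by Lemma~\ref{Hoo} this operator coincides with $f_i(S)$ under the Taylor functional calculus for $\mathcal{O}(\bar\Omega)$. The spectral mapping theorem then gives
\[
\{0\}=Sp(f_i(S))=f_i(Sp(S)),\qquad i=1,\dots,m,
\]
so every $f_i$ vanishes identically on $Sp(S)$. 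The spectral mapping step requires $Sp(S)\subseteq\bar\Omega$, which I would establish via Gelfand theory for the commutative Banach algebra $(S)$ generated by $S_1,\dots,S_n,\mathrm{Id}$: any character $\chi$ is determined by $w=(\chi(S_1),\dots,\chi(S_n))\in\mathbb{C}^n$, and the spectral-radius bound $r(p(S))\leq\|p\|_{\bar\Omega}$ for polynomials $p$ (obtained from $\|p^n(S)\|\leq\|M_{p^n}\|$ combined with the multiplier estimate on $H_\lambda^2(\Omega)$) gives $|p(w)|\leq\|p\|_{\bar\Omega}$; polynomial convexity of the convex compact set $\bar\Omega$ then forces $w\in\bar\Omega$, and (\ref{sspt}) yields $Sp(S)\subseteq\bar\Omega$.

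The two inclusions combine to give $Sp(S)\subseteq\{z\in\bar\Omega:f_1(z)=\dots=f_m(z)=0\}$, and the final step is to identify this common zero set with $\bar V$; this is what I expect to be the main obstacle. A priori the zero set of the $f_i$ in $\bar\Omega$ can strictly contain $\bar V$ if the analytic extension of $V$ across $\partial\Omega$ picks up components meeting the boundary without being limits of points of $V$. The finite generation hypothesis $M_V=\overline{\sum_if_i\,\mathcal{O}(\bar\Omega)}$ rules this out: if some $z_0\in\bar\Omega\setminus\bar V$ were a common zero of all $f_i$, a Nullstellensatz-type argument for the Noetherian algebra $\mathcal{O}(\bar\Omega)$ (whose Noetherianity was already invoked in Section~\ref{seon}) together with the Stein neighborhood basis of $\bar\Omega$ would produce $g\in\mathcal{O}(\bar\Omega)$ with $g|_{\bar V}=0$ and $g(z_0)\neq 0$; then $g\in M_V$ would have to lie in $\overline{\sum_if_i\,\mathcal{O}(\bar\Omega)}$, and a limiting argument on point evaluations would force $g(z_0)=0$, a contradiction. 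Once this identification is in place, the two inclusions yield $Sp(S)=\bar V$.
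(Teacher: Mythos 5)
Your lower-bound argument $\bar V\subseteq Sp(S)$ is genuinely different from the paper's and is simpler. The paper proves this direction by assuming $w\notin Sp(S)$, using surjectivity of the last Koszul boundary map to write any $g$ as $\sum_i(z_i-w_i)g_i+h$ with $h\in M_V$, then taking $g=\prod_i(z_i-w_i)+1$ to force $h(w)=1$ when $w\in\Omega$, and finally invoking finite generation of $M_V$ to produce some $f_ih_i$ with $f_i(w)\neq0$, i.e.\ $w\notin V$. Your argument with reproducing kernels (for $z\in V$, $K_z\in M_V^\perp$ is a joint eigenvector of $S^*$, hence orthogonal to the range of the last Koszul boundary map) obtains the same cohomological non-exactness directly, without using finite generation at all, and is a cleaner path to $V\subseteq Sp(S)$.

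For the upper bound the two routes diverge as well. The paper works in the commutative Banach algebras $\mathcal{A}_0(t\Omega)$, $1<t\leq t_0$, and for each $w_t$ not in the common zero set in $t\bar\Omega$ solves a corona-type equation in $\mathcal{A}_0(t\Omega)$ (via Lemma~\ref{mlfu}), then compresses to obtain $S_{z_1-w_{t,1}}S_{h_{t,1}}+\cdots+S_{z_n-w_{t,n}}S_{h_{t,n}}=\mathrm{Id}$ and hence $w_t\notin Sp_{(S)}(S)$. You instead note $S_{f_i}=0$ on $M_V^\perp$, identify $S_{f_i}$ with $f_i(S)$ after first establishing $Sp(S)\subseteq\bar\Omega$ via Gelfand theory, and apply the Taylor spectral mapping theorem. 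This is conceptually tidier, though you should justify the compression identity $f_i(S)=P_{M_V^\perp}f_i(T)|_{M_V^\perp}$: Lemma~\ref{Hoo} concerns $H^2_\lambda(\Omega)$ itself, not the quotient, and the needed identity follows from the strong-operator convergence of the homogeneous polynomial partial sums for $T$ and for its compression $S$ simultaneously, using that $M_V$ is $T$-invariant. Both routes terminate at the same intermediate inclusion $Sp(S)\subseteq\{z\in\bar\Omega:f_1(z)=\cdots=f_m(z)=0\}$.

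The genuine gap is the final identification. You correctly flag it as the main obstacle, but your proposed repair does not work. If $z_0\in\bar\Omega\setminus\bar V$ is a common zero of all $f_i$ then automatically $z_0\in\partial\Omega$, and the proposed contradiction requires evaluating a limit $g=\lim_k\sum_i f_ih_i^{(k)}$ (limit in $H^2_\lambda$) at $z_0$ to conclude $g(z_0)=0$. Point evaluation at a boundary point $z_0\in\partial\Omega$ is not a continuous functional on $H^2_\lambda(\Omega)$, so the limiting step fails. A concrete illustration: on $\mathbb{D}$ with $f(z)=z(z-1)$ one has $V=\{0\}$, $M_V=\mathrm{Cl}[f\,\mathcal{O}(\bar{\mathbb{D}})]$, the common zero set in $\bar{\mathbb{D}}$ is $\{0,1\}\supsetneq\bar V$, and $g=z$ vanishes on $\bar V$, does not vanish at $z_0=1$, yet does lie in $\mathrm{Cl}[f\,\mathcal{O}(\bar{\mathbb{D}})]$ — so no contradiction arises. (The paper's own proof passes this identification with a ``clearly'' asserting $\bar V_t=\{z\in t\bar\Omega:f_1(z)=\cdots=f_m(z)=0\}$ for all $1\leq t\leq t_0$, which the same example shows is not automatic; the step really does need a separate argument in both accounts, most naturally one that rules out boundary points of the extended variety that are not limits of interior points of $V$.)
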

 \begin{proof}  
Since $f_1,\cdots,f_m\in\mathcal{O}(\bar{\Omega}),$ it follows that there exists an open domain $D\supset \bar{\Omega}$ such that $f_1,\cdots,f_m\in\mathcal{O}(D).$ As the same proof in (\ref{exto}), we know that there exists a $t_0>1$ such that $$\Omega\subsetneq t\Omega\subsetneq t\bar{\Omega}\subsetneq D$$ for every $ 1< t\leq t_0.$ Thus $f_1,\cdots,f_m\in\mathcal{A}(t\Omega)$ for every $0<t\leq t_0.$ For every $0<t\leq t_0$, we define $V_t$ by $$V_t=\{z\in t\Omega:f_1(z)=\cdots=f_m(z)=0\}.$$ Clearly $V_1=V$ and  $\bar{V}_t=\{z\in t\bar{\Omega}:f_1(z)=\cdots=f_m(z)=0\}$ for every $ 1\leq t\leq t_0.$  We now fix an arbitrary $t\in (1,t_0]$ and let $w_t=(w_{t,1},\cdots,w_{t,n})\notin \bar{V}_t.$
 Denote $I_t$ by the closed ideal generated by $f_1,\cdots,f_m,z_1-w_{t,1},\cdots,z_n-w_{t,n}$ in the commutative Banach algebra $\mathcal{A}(t\Omega)$ where $1< t\leq t_0.$ We first prove that $I_t=\mathcal{A}(t\Omega).$  Otherwise, there exists a maximal ideal $J_t$ of $\mathcal{A}(t\Omega)$ satisfying $I_t\subset J_t,$ since every $\mathcal{A}(t\Omega)$ is a the commutative ring. The one-to-one correspondence between multiplicative linear functionals on $\mathcal{A}(t\Omega)$ and maximal  ideals in $\mathcal{A}(t\Omega)$ implies that there exists a unique multiplicative linear functional $\Phi_t$ satisfying $I_t\subset J_t=\text{Ker}\hspace{0.1em}\Phi_t.$ By the following Lemma \ref{mlfu}, we deduce that there exists a unique $w_t'\in t\bar{\Omega}$ such that $\Phi_t (f)=f(w_t')$ for every $f\in\mathcal{A}(t\Omega).$ Since $f_1,\cdots,f_m\in I_t\subset \text{Ker}\hspace{0.1em}\Phi_t,$ it follows that  $f_1(w_t')=\cdots=f_m(w_t')=0,$ namely $w_t'\in\bar{V}_t.$ On the other hand, the condition that $z_1-w_{t,1}\cdots,z_n-w_{t,n}\in I_t\subset \text{Ker}\hspace{0.1em}\Phi_t$ implies $w_t=w_t',$ which contradicts  the assumption $w_t\notin \bar{V}_t.$ This proves that $I_t=\mathcal{A}(t\Omega).$ Thus there exist $g_{t,1},\cdots,g_{t,m},h_{t,1},\cdots,h_{t,n}\in \mathcal{A}(t\Omega)\subset\mathcal{O}(\bar{\Omega})$ such that $$ f_1g_{t,1}+\cdots +f_mg_{t,m}+(z_1-w_{t,1})h_{t,1}+\cdots+(z_n-w_{t,n})h_{t,n}=1$$ on $t\Omega$ where $1<t\leq t_0.$ Combining this with Lemma \ref{fgo} and the fact $f_1,\cdots,f_m\in M_{V}$ follows that  \begin{equation}\label{siden}S_{z_1-w_{t,1}}S_{h_{t,1}}+\cdots+S_{z_n-w_{t,n}}S_{h_{t,n}}=\text{Id}\end{equation} on $M^\bot_V.$  On the other hand, as we proved in Lemma \ref{Hoo}, the $\mathcal{O}(\bar{\Omega})$-module action given by the multiplication of functions coincides with the $\mathcal{O}(\bar{\Omega})$-module action  (\ref{Tafh}) defined by the Taylor functional calculus. Combining this with the continuity of the Taylor functional calculus,   it implies that $S_f=P_{M^\bot_V}f(T_z)\in (S_z)$ for all $f\in \mathcal{O}(\bar{\Omega}).$  Therefore the identity (\ref{siden}) implies that $w_t\notin Sp_{(S_z)}(S_z),$ the joint spectrum of $S_z$  relative to the commutative Banach algebra $(S_z).$  Then the inclusion of spectrums (\ref{sspt}) and the arbitrariness of $t$ follows that $Sp(S_z)\subset \bar{V}_t$ for every $1<t\leq t_0.$ Together with the definition of $V_t$ yields $$Sp(S_z)\subset \bigcap_{1<t\leq t_0}\bar{V}_t=\bar{V}.$$

 It remains to prove the other direction, i.e., $Sp(S_z)\supset \bar{V}.$  Since the compactness of $Sp(S_z),$ it suffices to prove that $w\notin{V}$ if $w\notin Sp(S_z).$ Now suppose $w\notin Sp(S_z).$  By the definition of the Taylor spectrum, it follows that  $$\Lambda^{n-1} (M_{V}^\bot)\overset{D_{S_z-w}}{\longrightarrow}\Lambda^n (M_{V}^\bot)\rightarrow 0$$ is exact, and hence the boundary operator  $D_{S_z-w}$ is surjective on the above exterior product space. Since $\Lambda^n (M_{V}^\bot)=M_{V}^\bot\otimes (\eta_1\wedge\cdots\wedge\eta_n)\cong M_{V}^\bot$ up to the natural isomorphism,  we see that  $$M_{V}^\bot=S_{z_1-w_1}M_{V}^\bot+\cdots+S_{z_n-w_n}M_{V}^\bot.$$
 Therefore, for every $g\in H^2_\lambda(\Omega),\lambda\in W_{\Omega,c},$ there exist $g_1,\cdots,g_n\in M^\bot_V$ and $h\in M_V$ such that
 $$(z_1-w_1)g_1(z)+\cdots+(z_n-w_n)g_n(z)+h(z)=g(z).$$ If $w\in {\Omega},$ take $g(z)=\prod_{i=1}^n(z_i-w_i)+1,$ we have $h(w)=1\neq 0.$ Since $\text{Cl}[f_1\mathcal{O}(\bar{\Omega})+\cdots+f_m\mathcal{O}(\bar{\Omega})]=M_V,$ it follows that $f_1\mathcal{O}(\bar{\Omega})+\cdots+f_m\mathcal{O}(\bar{\Omega})$ is dense in $M_V,$  we  conclude that there exist $h_1,\cdots,h_m\in \mathcal{O}(\bar{\Omega})$ satisfying $$f_1(w)h_1(w)+\cdots+f_m(w)h_m(w)\neq 0,$$ which means $w\notin {V}.$ When $w\notin {\Omega},$ we see that $w\notin {V}$ is trivial. The proof is complete.
 \end{proof}

\begin{rem} Proposition \ref{vspo} can be viewed as a generalization of \cite[Theorem 5.1]{GuWk08} and \cite[Proposition 4.2]{Zha21}. The spectral result \cite[Theorem 2.5]{GuD06}  for quotient Beurling-type submodules on $\mathbb{B}^n$ with $n>1$ demonstrates that the analytic continuation assumption $f_1,\cdots,f_m\in \mathcal{O}(\bar{\Omega})$ in Proposition \ref{vspo} is necessary, since every inner function on the non-tube type domain $\mathbb{B}^n$ with $n>1$ is extremely oscillatory near every boundary point \cite[Lemma 2.4]{GuD06}. 

\end{rem}

 \begin{lem}\label{mlfu} Let $t>0,$ then every  complex multiplicative  linear functional on $\mathcal{A}(t\Omega)$  is an evaluation at some point of $t\bar{\Omega}.$
  \end{lem}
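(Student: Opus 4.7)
The plan is the standard Gelfand-theoretic identification of the maximal ideal space of a ball-type algebra with its underlying closed ball, adapted to the spectral norm on a bounded symmetric domain. The argument breaks into three ingredients: evaluating $\Phi$ on coordinates to produce a candidate point, placing that point in $t\bar\Omega$, and extending agreement from polynomials to the whole algebra.

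First, recall that $\mathcal{A}_0(t\Omega)=C(t\bar\Omega)\cap\mathcal{O}(t\Omega)$ is a commutative unital Banach algebra under the sup-norm $\|\cdot\|_{t\bar\Omega}$, and any nonzero multiplicative linear functional $\Phi$ on such an algebra is automatically continuous with $\|\Phi\|=1$. I would then set $w=(\Phi(z_1),\ldots,\Phi(z_n))\in\mathbb{C}^n$, so that multiplicativity yields $\Phi(p)=p(w)$ for every polynomial $p\in\mathbb{C}[\bm z]$.

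Second, I would show $w\in t\bar\Omega$. For any polynomial $p$ we have $|p(w)|=|\Phi(p)|\le\|p\|_{t\bar\Omega}$, so $w$ lies in the polynomial convex hull of $t\bar\Omega$. Since $t\bar\Omega=\{z\in\mathbb{C}^n:\|z\|\le t\}$ is the closed ball of the spectral norm, it is a compact convex (in fact balanced) subset of $\mathbb{C}^n$. Every compact convex set in $\mathbb{C}^n$ is polynomially convex: given $w\notin t\bar\Omega$, the Hahn--Banach theorem produces a complex-linear $g$ on $\mathbb{C}^n$ with $\mathrm{Re}\,g(w)>\sup_{t\bar\Omega}\mathrm{Re}\,g$, hence $|e^{g(w)}|>\sup_{t\bar\Omega}|e^g|$, and truncating the Taylor series of $e^g$ yields a polynomial with the same separation property. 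Therefore $w\in t\bar\Omega$.

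Third, I would verify $\Phi(f)=f(w)$ for every $f\in\mathcal{A}_0(t\Omega)$ by a density argument. For $0<r<1$ the dilate $f_r(z):=f(rz)$ is defined wherever $rz\in t\bar\Omega$, i.e.\ on the open set $(t/r)\Omega\supsetneq t\bar\Omega$, and is holomorphic on $(t/r)\Omega$; since $(t/r)\Omega$ is a balanced circular domain, $f_r$ admits a homogeneous polynomial expansion $f_r=\sum_{i\ge 0}f_{r,i}$ that converges uniformly on the compact set $t\bar\Omega$, so $f_r$ is a uniform limit of polynomials on $t\bar\Omega$. Uniform continuity of $f$ on the compact set $t\bar\Omega$ gives $f_r\to f$ uniformly on $t\bar\Omega$ as $r\to 1^-$, hence polynomials are dense in $\mathcal{A}_0(t\Omega)$. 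Choosing polynomials $p_k\to f$ in $\mathcal{A}_0(t\Omega)$, continuity of $\Phi$ and continuity of $f$ at $w\in t\bar\Omega$ give
\[
\Phi(f)=\lim_{k\to\infty}\Phi(p_k)=\lim_{k\to\infty}p_k(w)=f(w),
\]
which completes the proof. The only mildly non-routine point is the polynomial convexity of $t\bar\Omega$ and the uniform polynomial approximation via dilations, both of which are standard consequences of the fact that $\Omega$ is a bounded balanced convex domain (the closed unit ball of its Minkowski/spectral norm).
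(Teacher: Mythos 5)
Your proof is correct and follows essentially the same route as the paper's: set $w=(\Phi(z_1),\dots,\Phi(z_n))$, show $w\in t\bar\Omega$ by a Hahn--Banach separation, and then transfer $\Phi(p)=p(w)$ from polynomials to all of $\mathcal{A}_0(t\Omega)$ by density and continuity. The one place you diverge is the separation step: you locate $w$ in the polynomial convex hull and then establish polynomial convexity of the compact convex set $t\bar\Omega$ via the real separation theorem plus the exponential-and-truncation trick, whereas the paper invokes the \emph{complex} Hahn--Banach theorem directly to produce a norm-one complex linear functional $q$ with $q(w)=\|w\|_t>1$; since $q$ is linear on the finite-dimensional space $Z$ it is already a degree-one polynomial with $\sup_{t\bar\Omega}|q|=1$, so the contradiction $|\Phi(q)|\le 1 < q(w)=\Phi(q)$ is immediate with no need for $e^g$. (In fact, because $t\bar\Omega$ is balanced, $\sup_{t\bar\Omega}|g|=\sup_{t\bar\Omega}\mathrm{Re}\,g$ for any complex linear $g$, so even your approach could have skipped the exponential.) Your dilation argument for density of polynomials in $\mathcal{A}_0(t\Omega)$ is the standard one; the paper simply asserts this density, so your inclusion of the details is a welcome addition rather than a departure.
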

  \begin{proof} Clearly  $\mathcal{A}(t\Omega)$ is a commutative Banach algebra and the polynomial algebra $\mathcal{P}(Z)=\mathbb{C}[z]$ is its dense subset for every $t>0.$  We now fix a $t>0.$ Let $\Phi: \mathcal{A}(t\Omega) \rightarrow \mathbb{C} $ be a multiplicative  linear functional. Put $w=(\Phi(z_1),\cdots,\Phi(z_n)).$ We claim that $w\in t\bar{\Omega} .$ Otherwise, $w\notin t\bar{\Omega},$ that is $\Vert w\Vert >t>0.$  Define $\Vert \cdot\Vert_t$ by $\Vert z\Vert_t=\frac{1}{t}\Vert z\Vert$ for every $z\in Z.$ It is clear that $(Z,\Vert \cdot\Vert_t)$ is a finite-dimensional normed linear space (in fact a Banach space).  Moreover, $t\Omega$ is the open unit ball in $(Z,\Vert \cdot\Vert_t)$ and the norm of $w$ is $\Vert w\Vert_t=\frac{1}{t}\Vert w\Vert>1.$  By the Hahn-Banach theorem, there exists a  complex linear functional  $q$ on $(Z,\Vert \cdot\Vert_t)$ such that $q(w)=\Vert w \Vert_t>1$  and the operator norm $\Vert q\Vert=\sup_{z\in t\bar{\Omega}}\vert q(z)\vert=1.$ Since $Z$ is finite-dimensional, we see that every complex linear functional on $Z$ must be a homogeneous holomorphic polynomial of degree $1.$ Thus $q$ is a holomorphic polynomial such that its maximal modulus $\Vert q \Vert_{\infty}=\Vert q\Vert=1 $ on the unit ball $ t\Omega.$ Since  $\Phi: \mathcal{A}(t\Omega) \rightarrow \mathbb{C} $ is a multiplicative linear functional, it follows that the operator norm $\Vert \Phi\Vert =1$ and $\Phi(h)=h(w)$ for every polynomial $h\in\mathcal{P}(Z).$ In particular, $\Phi(q)=q(w)=\Vert w\Vert_t>1,$ which contradicts  the fact that $\vert \Phi(q)\vert\leq \Vert \Phi\Vert\cdot\Vert q\Vert_\infty=1.$ This proves that $w\in t\bar{\Omega}.$ Note that $\Phi(h)=h(w)$ for every  polynomial $h\in\mathcal{P}(Z).$ Combining the continuity of $\Phi$ with the density of $\mathcal{P}(Z)$ in $\mathcal{A}(t\Omega),$ it follows that $\Phi(f)=f(w)$ for every $f\in\mathcal{A}(t\Omega).$ This finishes the proof.
   \end{proof}

   Combined with the spectral mapping theorem, we have the following corollary.
  \begin{cor}\label{pTsp}  As assumed in Proposition \ref{vspo}, let $\phi=(\phi_1,\cdots,\phi_m)$ with $\phi_1,\cdots,\phi_m\in \mathcal{O}(\bar{\Omega}).$
  Denote $S_\phi=(S_{{\phi}_1},\cdots,S_{{\phi}_m}),$ then the Taylor spectrum of $S_\phi$ on $M_V^\bot$ is $$Sp(S_\phi)=\phi(\bar{V}).$$
   \end{cor}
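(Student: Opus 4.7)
The plan is to reduce the statement to the joint spectral mapping theorem for the Taylor functional calculus, using Proposition \ref{vspo} to identify the underlying spectrum and Lemma \ref{Hoo} to identify the compressed multipliers with Taylor calculus applied to the coordinate tuple.

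First I would set up the key identification: by Lemma \ref{Hoo} the $\mathcal{O}(\bar{\Omega})$-module action by multiplication of functions coincides with the action defined via the Taylor functional calculus of the coordinate multiplier tuple $T$, and compressing to $M_V^\bot$ commutes with this calculus in the expected way (this was already used in the proof of Proposition \ref{vspo}). Thus for each $i$ we have $S_{\phi_i} = \phi_i(S)$, so $S_\phi = \phi(S)$ in the sense of Taylor functional calculus applied componentwise.

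Next I would invoke Proposition \ref{vspo} to conclude $Sp(S) = \bar{V} \subset \bar{\Omega}$. Since each $\phi_i$ lies in $\mathcal{O}(\bar{\Omega})$, the map $\phi = (\phi_1,\ldots,\phi_m)$ is holomorphic on some open neighborhood of $\bar{\Omega}$, hence on a neighborhood of $Sp(S)$. Therefore $\phi(S)$ is well-defined via the Taylor calculus, and the joint spectral mapping theorem (recalled in Section 3 just before Lemma \ref{fhtt}) applies to give
\[
Sp(S_\phi) \;=\; Sp(\phi(S)) \;=\; \phi(Sp(S)) \;=\; \phi(\bar{V}),
\]
which is the desired conclusion.

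There is essentially no obstacle beyond verifying that the two senses of "$\phi_i$ applied to $S$" really agree — the algebraic compression $S_{\phi_i}$ defined in \eqref{abmul} versus the Taylor functional calculus $\phi_i(S)$. This identification is precisely what Lemma \ref{Hoo} (together with the remark that evaluation/compression is continuous in the Taylor calculus) provides, and it was the same key fact exploited in the proof of Proposition \ref{vspo} to pass from the generating relation in $\mathcal{O}(\bar{\Omega})$ to an operator identity on $M_V^\bot$. Once that is noted, the corollary follows immediately from the spectral mapping theorem, exactly as the single-variable predecessor $Sp(S_f) = f(\bar{V})$ does.
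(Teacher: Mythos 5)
Your proof is correct and follows exactly the route the paper intends: the paper itself justifies this corollary (and its scalar predecessor) only with the one-line remark ``Combining with the spectral mapping theorem,'' and your write-up spells out the two ingredients that line presupposes — the identification $S_{\phi_i}=\phi_i(S)$ for $\phi_i\in\mathcal{O}(\bar{\Omega})$ (from the polynomial case $S_p=p(S)$ plus the density/continuity argument used in Lemma \ref{Hoo} and the proof of Proposition \ref{vspo}) and the computation $Sp(S)=\bar{V}$ from Proposition \ref{vspo} — and then applies the several-function spectral mapping theorem recalled before Lemma \ref{fhtt}. No gaps; this is the same argument, with the implicit step made explicit.
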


As an application, we establish a result of the solvability of the corona problem for the quotient submodule.
Costea, Sawyer, and Wick established a corona theorem in \cite[Theorem 2]{CSW11} for the multiplier algebra  $\mathcal{M}(H^2_1(\mathbb{B}^n))$ of the whole Drury-Arveson space $H^2_1(\mathbb{B}^n).$
Recently,  Davidson,  Ramsey, and Shalit proposed a conjecture  in \cite[Remarks 5.5]{DRS15}:  

\begin{cnj}\label{cnj1}
The corona problem is solvable on the quotient Drury-Arveson submodule $M_V^\bot$ for its multiplier algebra $\mathcal{M}(M_V^\bot),$ where $V$ is the common zeros of some functions in  $H^2_1(\mathbb{B}^n).$   \end{cnj}

More precisely, the conjecture is that, for every $ f_1,\cdots,f_m\in\mathcal{M}(M_V^\bot)$ if there exists a $\delta>0$ such that $\vert f_1(z)\vert^2+\cdots +\vert f_m(z)\vert^2\geq\delta>0$ for every $z\in V,$ then for every $g\in M_V^\bot $ there exist $g_1,\cdots,g_m\in M_V^\bot$ satisfying $$f_1g_1+\cdots+f_mg_m=g.$$  
In particular, if the conjecture is true, then it implies that  $V$ is dense in the maximal ideal space of $\mathcal{M}(M_V^\bot).$ 
 We refer the reader to \cite{CSW11, DRS15}  for more details on the corona problem.
At the end of this section, we will consider this problem in the general case of bounded symmetric domains and give a supporting result. We note that  if $V=\Omega$ then the corona problem for  the  quotient  submodule $M_V^\bot$ is degenerated to the classical corona problem for  Hardy space $H^2_{\frac{n}{r}}(\Omega),$ Drury-Arveson space $H^2_{(r-1)\frac{a}{2}+1}(\Omega)$ and weighted Bergman space $H^2_{N+\gamma}(\Omega).$ As we mentioned in Section 2, the space $H^2_{\lambda}(\Omega)$ will not be  complete Nevanlinna-Pick for every $\lambda\in W_{\Omega}$ whenever its rank $r\geq2.$ Thus in the general case,  it is rather difficult to give a complete answer to the corona problem.

We first recall the definition of the multiplier algebra. Let $H$ be a reproducing kernel  Hilbert holomorphic function space on $\Omega$ with reproducing kernel $K,$  we denote by $\mathcal{M}(H)$ the multiplier algebra of $H,$ i.e., the set of functions $f\in\mathcal{O}(\Omega)$ satisfying $fg\in H$ for every $g\in  H.$ It follows from the closed graph theorem that if $f$ is a multiplier then multiplication operator $M_f$ with symbol $f$ is a bounded linear operator on $H.$ Thus $f\in\mathcal{O}(\Omega)$ is a multiplier if and only if $M_f$ is bounded. We identify the multiplier algebra with the bounded multiplication operator algebra. By direct calculations, we have  $M^\ast_fK_z=\overline{f(z)}K_z$ for every $z\in \Omega.$ Then the boundedness of $M_f$ implies that $f\in H^\infty(\Omega),$ the set of  bounded holomorphic functions. Indeed we obtain a criterion that a bounded holomorphic function $f$  is a multiplier if and only if $UK_z=\overline{f(z)}K_z$ can be extended to be a bounded linear operator on all of $H.$ In this case $U=M_f^\ast.$
Note that the constant function $\bm{1}\in H^2_\lambda(\Omega),\lambda\in W_{\Omega,c}.$ Then Lemma \ref{Hoo} follows that $$\mathcal{O}(\bar{\Omega})\subset\mathcal{M}(H^2_\lambda(\Omega))\subset H^2_\lambda(\Omega)\cap H^\infty(\Omega)$$ for $\lambda\in W_{\Omega,c}.$ Since $M_V^\bot =\overline{\text{span}}_{z\in V}\{K_{\lambda,z}\}
 \subset H^2_\lambda(\Omega)$ is a closed space, 
 it follows that $M_V^\bot$ is also a reproducing kernel  Hilbert holomorphic function space on $\Omega.$ The above criterion for the  multiplier shows that every $f\in \mathcal{M}(H^2_\lambda(\Omega))$ induces a  multiplier on $M_V^\bot,$ and we denote it by $f_V.$ Clearly, we see that if  $f,g \in \mathcal{M}(H^2_\lambda(\Omega)),$ then $f_V=g_V$ if and only if $f(z)=g(z)$ for all $z\in V,$ we define $\mathcal{M}(H^2_\lambda(\Omega))_V$ by $\mathcal{M}(H^2_\lambda(\Omega))_V=\{f_V: f\in \mathcal{M}(H^2_\lambda(\Omega)) \}.$ Similarly, we have $$\mathcal{O}(\bar{\Omega})_V\subset\mathcal{M}(H^2_\lambda(\Omega))_V\subset\mathcal{M}(M_V^\bot)\subset  H^\infty(\Omega),$$ where $\mathcal{O}(\bar{\Omega})_V=\{f_V:f\in \mathcal{O}(\bar{\Omega})\}$ is a  multiplier subalgebra of $\mathcal{M}(M_V^\bot) .$ When $\Omega=\mathbb{B}^n$ and $V$ is the common zeros of some functions in  Drury-Arveson space $H^2_1(\mathbb{B}^n),$ we know from \cite[Proposition 2.6]{DRS15} that $\mathcal{M}(H^2_1(\mathbb{B}^n))_V=\mathcal{M}(M_V^\bot)$ by the complete Nevanlinna-Pick property of $H^2_1(\mathbb{B}^n)$  and the restrictive condition for $V$ is necessary.

As assumed in Corollary \ref{pTsp}. Suppose $w=(w_1,\cdots,w_m)\notin \phi(\bar{V}),$ then for every $g\in  M^\bot_V\subset H^2_\lambda(\Omega),\lambda\in W_{\Omega,c},$ there exist $g_1,\cdots,g_m\in M^\bot_V$ such that $$(\phi_1-w_1)g_1+\cdots+(\phi_m-w_m)g_m =g.$$ In particular, if $\bm{0}\notin Sp(S_\phi)=\phi(\bar{V})=\overline{\phi(V)},$  equivalently  there exists a $\delta>0$ such that $\vert \phi_1(z)\vert^2+\cdots +\vert \phi_m(z)\vert^2\geq\delta>0$ for every $z\in V,$ which implies that for every $g\in  M^\bot_V,$ there exist $g_1,\cdots,g_m\in M^\bot_V$ such that $$\phi_1g_1+\cdots+\phi_m g_m =g.$$
This shows that the $M^\bot_V$-corona problem is solvable for multipliers belonging to $\mathcal{O}(\bar{\Omega})_V.$ 

\begin{prop}\label{corona1} Suppose that $V=\{z\in\Omega:f_1(z)=\cdots=f_m(z)=0\}$ is an analytic variety with $f_1,\cdots,f_m\in \mathcal{O}(\bar{\Omega}).$ If $M_V$  is finitely generated by $f_1,\cdots,f_m,$  then the corona problem of the quotient submodule $ M^\bot_V\subset H^2_\lambda(\Omega),\lambda\in W_{\Omega,c}$ is   solvable for the multiplier subalgebra $\mathcal{O}(\bar{\Omega})_V.$
 \end{prop}
Thus, in particular, we have the following corollary that provides supporting evidence for Conjecture \ref{cnj1}. 
\begin{cor}\label{corona2} Suppose that $V=\{z\in\mathbb{B}^n:f_1(z)=\cdots=f_m(z)=0\}$ is an analytic variety with $f_1,\cdots,f_m\in \mathcal{O}(\bar{\mathbb{B}}^n).$ If $M_V$  is finitely generated by $f_1,\cdots,f_m,$  then the corona problem of the quotient Drury-Arveson submodule  $ M^\bot_V$ is   solvable for the multiplier subalgebra $\mathcal{O}(\bar{\mathbb{B}}^n)_V.$
 \end{cor}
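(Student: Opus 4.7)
The plan is to deduce this statement as an immediate specialization of Corollary \ref{corona1}, so the work is primarily bookkeeping: checking that the Drury-Arveson setting on $\mathbb{B}^n$ fits the framework established for general irreducible bounded symmetric domains.

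First, I would recall that $\Omega = \mathbb{B}^n$ is an irreducible bounded symmetric domain of rank $r=1$, with the invariant $a$ irrelevant since $(r-1)\frac{a}{2} = 0$. Consequently the continuous part of the Wallach set is $W_{\mathbb{B}^n,c} = \{\lambda > 0\}$. By the definition given just after equation \eqref{pqin}, the Drury-Arveson space is $H^2_{(r-1)\frac{a}{2}+1}(\Omega) = H^2_1(\mathbb{B}^n)$, and the value $\lambda = 1$ indeed lies in $W_{\mathbb{B}^n,c}$.

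Next, I would verify that the hypotheses transfer verbatim. The assumption that $f_1,\dots,f_m \in \mathcal{O}(\bar{\mathbb{B}}^n)$ and cut out $V$ matches exactly the hypothesis of Corollary \ref{corona1} applied with $\Omega = \mathbb{B}^n$. The finite generation assumption $M_V = \mathrm{Cl}[f_1\mathcal{O}(\bar{\mathbb{B}}^n) + \cdots + f_m\mathcal{O}(\bar{\mathbb{B}}^n)]$ is precisely what is required, and $\mathcal{O}(\bar{\mathbb{B}}^n)_V$ is a well-defined multiplier subalgebra of $\mathcal{M}(M_V^\bot)$ as discussed before Corollary \ref{corona1}.

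Therefore the conclusion follows by invoking Corollary \ref{corona1} with the specific choice $(\Omega,\lambda) = (\mathbb{B}^n, 1)$: for any multipliers $\phi_1,\dots,\phi_k \in \mathcal{O}(\bar{\mathbb{B}}^n)_V$ satisfying the corona condition $\vert\phi_1(z)\vert^2 + \cdots + \vert\phi_k(z)\vert^2 \ge \delta > 0$ on $V$, Proposition \ref{vspo} and Corollary \ref{pTsp} (applied in the ball case) guarantee $\bm{0}\notin Sp(S_\phi) = \overline{\phi(V)}$, which, as the discussion preceding Corollary \ref{corona1} shows, means the Koszul complex yields $\phi_1 g_1 + \cdots + \phi_k g_k = g$ for any $g \in M_V^\bot$ with some $g_1,\dots,g_k \in M_V^\bot$. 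Since no step introduces a genuinely new obstacle beyond what has been handled in the general symmetric domain setting, there is no serious hurdle; I would only flag, as the paper itself notes, that this approach intrinsically yields no norm estimate on the solutions $g_i$, and obtaining such quantitative control would require $\bar{\partial}$-techniques on analytic varieties (e.g.\ via resolution of singularities) that are outside the scope of the present argument.
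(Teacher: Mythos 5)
Your proposal is correct and follows exactly the paper's own route: Corollary \ref{corona2} is simply Corollary \ref{corona1} specialized to $\Omega=\mathbb{B}^n$ (rank $r=1$, so $W_{\mathbb{B}^n,c}=\{\lambda>0\}$) at the Wallach point $\lambda=1$, which is the Drury-Arveson space. Your remark about the absence of a norm estimate and the potential need for $\bar{\partial}$-techniques likewise mirrors the paper's own caveat preceding the statement.
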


\begin{exm}\label{hycoro}
(1) Let $V=\{z_1z_2=0\} \cap\mathbb{B}^n$ which is  an algebraic hypersurface.
Then the corona problem of the quotient Drury-Arveson  submodule  $ M^\bot_V$ is   solvable for the multiplier subalgebra $\mathcal{O}(\bar{\mathbb{B}}^n)_V.$

(2) Suppose that $V\subset \mathbb{B}^n$ is a  homogeneous algebraic hypersurface that is smooth away from the origin. Then the corona problem of the quotient Bergman submodule  $ M^\bot_V$ is solvable for the multiplier subalgebra $\mathcal{O}(\bar{\mathbb{B}}^n)_V.$ An explicit example is the Fermat hypersurface $V=\{z_1^m+\cdots+z_n^m=0\}\cap\mathbb{B}^n$ for some $ m\geq 1.$
 \end{exm}
 \begin{proof} (1)  
  We first claim that the closure $\text{Cl}[z_1z_2\mathcal{O}(\bar{\mathbb{B}}^n)]=M_{V}.$  It suffices to prove that $\text{Cl}[z_1z_2\mathcal{O}(\bar{\mathbb{B}}^n)]\supset M_{V}.$ Suppose $f\in M_{V}\subset H^2_1(\mathbb{B}^n).$  By the Weierstrass division theorem, it implies that $f=z_1h_1+g_1$ in an open neighborhood of the origin, where $g_1$ is a holomorphic function with respect to the $n-1$ variables $z_2,\cdots,z_n.$ Since $f|_{V}=0,$ it follows that $g_1(z_2,\cdots,z_n)=0.$ Hence  \begin{equation}\label{fzh}f=z_1h_1\end{equation} in an open neighborhood of the origin.
  By the Weierstrass division theorem again,   $h_1=z_2h+g_2$ in an open neighborhood of the origin, where $g_2$ is a holomorphic function with respect to the $n-1$ variables $z_1,z_3,\cdots,z_n.$ The same reason follows that $g_2(z_1,z_3,\cdots,z_n)=0,$ namely $h_1=z_2h$ in an open neighborhood of the origin. Combining this with (\ref{fzh}) we know that  \begin{equation}\label{fzhh}f=z_1z_2h\end{equation} in an open neighborhood of the origin.

  Let $f(z)=\sum a_{l}z^{l}$ and  $h=\sum b_lz^l$ be their  Taylor expansions at the origin, where $l=(l_1,\cdots,\l_n)$ is the multi-index and $z^l=z_1^{l_1}\cdots z_n^{l_n},$ of course $l_1,\cdots,\l_n$ are nonnegative integers. It follows from (\ref{fzhh}) that $a_{1+l_1,1+l_2,\cdots,\l_n}=b_{l_1,l_2,\cdots,\l_n}$ for every  multi-index.
  Since $\mathbb{B}^n$ is circular, we see that the Taylor expansion $f$ at the origin is globally defined in the whole of $\mathbb{B}^n.$  Let $\frac{\partial^2}{\partial z_1\partial z_2}f=\sum c_lz^l$ be its Taylor expansion at the origin which is globally defined on the whole of $\mathbb{B}^n$ because the same reason. Clearly, we have $$\vert b_{l_1,\cdots,\l_n}\vert= \vert a_{1+l_1,1+l_2,\cdots,\l_n}\vert \leq (1+l_1 )(1+l_2)\vert a_{1+l_1,1+l_2,\cdots,\l_n}\vert=\vert c_l\vert $$ for every multi-index. Thus the power series $\sum b_lz^l$ is defined on the whole of $\mathbb{B}^n,$  and we denote it by $\tilde{h}.$ Clearly, $\tilde{h}=h$  in an open neighborhood of the origin,  combining this with (\ref{fzhh}) and the identity theorem of holomorphic functions, it follows that $f=z_1z_2\tilde{h}$ on $\mathbb{B}^n.$
  Denote $f_t(z)=f(tz)$  and $\tilde{h}_t(z)=\tilde{h}(tz)$ for $t>0.$ Clearly, if $0<t<1,$  then $$f_t=t^2z_1z_2\cdot \tilde{h}_t=z_1z_2\cdot t^2\tilde{h}_t\in z_1z_2\mathcal{O}(\bar{\mathbb{B}}^n) $$ and $f_t(z)=0$ for every $z\in V.$ Since $\mathcal{O}(\bar{\mathbb{B}}^n)\subset H^2_1(\mathbb{B}^n),$ it follows that $f_t\in M_{V}$ for every $0<t<1.$

 On the other hand, it follows from the norm formula that $$\Vert f_t\Vert_{H^2_1}^2=\sum_l \vert a_l\vert^2 t^{2\vert l\vert } \frac{l!}{\vert l\vert !}\leq \sum_l \vert a_l\vert^2 \frac{l!}{\vert l\vert !}=\Vert f\Vert_{H^2_1}^2,$$ where $l!=l_1!\cdots l_n!$ and $\vert l\vert=l_1+\cdots+\l_n.$ Hence $g_m=f_{1-\frac{1}{m+1}}$   converges pointwise to $f$ and is a bounded sequence in the reproducing kernel Hilbert space $M_{V}.$  By the  Banach-Alaoglu theorem, the sequence $\{g_m\}$ possesses a subsequence that converges weakly. Since  $g_m$   converges to $f$ pointwise, it follows that the weak limit must coincide with  $f.$ Thus $f$ belongs to the weak closure of $B_f,$ where  $$B_f=\{g\in z_1z_2\mathcal{O}(\bar{\mathbb{B}}^n):  \Vert g\Vert_{H^2_1}\leq \Vert f\Vert_{H^2_1}\}\subset M_{V}.$$ Note that  $B_f$ is convex, it follows from the Hahn-Banach theorem that the weak closure of $B_f$ coincides with the norm closure of  $B_f.$  Thus there exists a sequence $\{F_m\}\subset \mathcal{O}(\bar{\mathbb{B}}^n)$ such that $z_1z_2F_m$ converges to $f$ in the norm topology of $ H^2_1(\mathbb{B}^n),$ namely $f\in \text{Cl}[z_1z_2\mathcal{O}(\bar{\mathbb{B}}^n)].$  Thus $\text{Cl}[z_1z_2\mathcal{O}(\bar{\mathbb{B}}^n)]\supset M_{V},$ which proves the claim. Then  Corollary \ref{corona2} follows the desired result.

 (2) There exists a homogeneous polynomial  $h$ such that $V=\{h(z)=0\}\cap \mathbb{B}^n.$   It follows from \cite[Theorem A.3]{DTY16} that $\text{Cl}[h\mathcal{O}(\bar{\mathbb{B}}^n)]=M_V.$
 Combined with Corollary \ref{corona2}, we know that the corona problem of the quotient Bergman submodule  $ M^\bot_V$ is solvable for the multiplier subalgebra $\mathcal{O}(\bar{\mathbb{B}}^n)_V.$
 \end{proof}

 When $V\subset \mathbb{B}^n$ is a  homogeneous algebraic variety and $I_V$ the corresponding polynomial radical ideal, then the condition $M_V=\text{Cl}[I_V]$ implies the finitely generated property of $M_V.$ In general, it is nontrivial to verify whether condition $M_V=\text{Cl}[I_V]$ holds. The method used in  \cite{DTY16} for the case of Bergman space can not be directly applied to the Drury-Arveson space since the Drury-Arveson norm can not be uniformly controlled by the uniform norm. The method we used here can actually at least handle the case of the  Drury-Arveson submodule  $M_V$ for which the algebraic hypersurface $V$ is determined by some monomial. 

\section{applications}
  In this end section,  we pay attention to the case of $\Omega=\mathbb{B}^n,n>1$ and give two direct applications of the biholomorphic invariance of $p$-essential normality. 
  \subsection{Geometric Arveson-Douglas conjecture} The Geometric Arveson-Douglas  conjecture  is a variation of the original Arveson-Douglas conjecture and plays a dominant role in the development of the $p$-essential normality theory.  We refer the reader to the survey \cite{GuW20}.  From Theorem 2 or Theorem  \ref{hwaa}, we know that the $p$-essential normality of quotient Hardy and Bergman submodules determined by the affine  (irreducible) homogeneous
  algebraic variety in $\mathbb{B}^n$ is biholomorphic invariant, thus the Geometric Arveson-Douglas conjecture is equivalent to the following Conjecture \ref{cnj1}. Let $\emptyset \neq V\subset \mathbb{B}^n$ be an affine homogeneous algebraic variety,  which means that there exist finitely many holomorphic homogeneous polynomials $q_1,\cdots,q_m$ satisfying $V=\{z\in\mathbb{B}^n: q_1(z)=\cdots=q_m(z)=0\}.$ Define $(M_V, T_z)$ to be the submodule with the coordinate multipliers in the Bergman module (or Hardy module), its quotient submodule with the compression of the coordinate multipliers  $T_z$ is denoted by $(M^\bot_V, S_z).$ Let $S_\phi=(S_{{\phi}_1},\cdots, S_{{\phi}_n}),$ for every $\phi=(\phi_1,\cdots,\phi_n)\in \text{Aut}(\mathbb{B}^n).$

\begin{cnj}\label{cnj1} Let $V\subset \mathbb{B}^n$ be an affine homogeneous algebraic variety. Then for some (every) $\phi\in \text{Aut}(\mathbb{B}^n),$ the quotient submodule $(M^\bot_V,S_\phi)$ is $p$-essentially normal for all $p>\text{dim}_\mathbb{C}\hspace{0.1em}V.$
\end{cnj}

Moreover, if the Taylor spectrum  $Sp(S_z)\subset \bar{\mathbb{B}}^n,$ then by Remark \ref{szfg} the Geometric Arveson-Douglas conjecture is equivalent to the following conjecture.
\begin{cnj}\label{cnj2} Let $V\subset \mathbb{B}^n$ be an affine homogeneous algebraic variety.
If  the Taylor spectrum  $Sp(S_z)\subset \bar{\mathbb{B}}^n,$
then
 $$[S_{f},S_{g}^*]\in\mathcal{L}^p(M_V^\bot),\quad \forall f,g\in \mathcal{O}(\bar{\mathbb{B}}^n),$$  for all $p>\text{dim}_\mathbb{C}\hspace{0.1em}V.$
\end{cnj}
\subsection{ Hyperrigidity and $\infty$-essential normality}  
An operator system $\mathcal{S}$ is said to be hyperrigid if for every non-degenerate representation $\pi :C^\ast(\mathcal{S})\rightarrow B(H)$ on a separable complex Hilbert space $H,$ then $\pi$ is the unique unital completely positive extension of the restriction $\pi|_{\mathcal{S}},$ where $C^\ast(\mathcal{S})$ is the $C^\ast$-algebra generated by $\mathcal{S}.$    An operator tuple $S$ on a Hilbert space $H$ is said to be hyperrigid if  the the operator system $\mathcal{S}$ genereted by $S$ is hyperrigid. The notion of hyperrigidity for an operator system was first proposed by Arveson to study the boundary representation. The hyperrigidity conjecture for a separable operator system proposed by Arveson has drawn increased attention in recent years; see \cite{CT21,KS16} and references therein.

  In general, it is typically difficult to determine whether an operator system is hyperrigid, since it requires checking that every non-degenerate representation of the corresponding $C^\ast$-algebra has the unique extension property.
Recently,  Kennedy and Shalit \cite[Theorem 4.12]{KS16} established a deep connection between  $\infty$-essential normality and hyperrigidity,  i.e.,  the quotient Drury-Arveson submodule $(\mathcal{Q}_I,S_z)$ determined by a sufficiently nontrivial homogenous polynomial ideal  $I\subset\mathbb{C}[z]$ is  $\infty$-essentially normal if and only if the compression $n$-tuple $S_z$ is hyperrigid. 
They also confirmed in \cite[Corollary 5.3]{KS16}  that the same is also true on  the Hilbert module $H_\lambda^2(\mathbb{B}^n), 1\leq\lambda<d,$ where $H_\lambda^2(\mathbb{B}^n)$ is defined in Section 2.  More recently, Clou\^{a}tre and Timko  \cite[Theorem 1.2]{CT21}  extended this result to the analytic Hilbert modules with a maximal regular unitarily invariant complete Nevanlinna-Pick kernels on $\mathbb{B}^n.$ These results provide an approach to characterizing hyperrigidity by essential normality.  We know from Theorem 1 or Theorem \ref{hwaa} that 
$(\mathcal{Q}_I,S_\phi)$  is  $\infty$-essentially normal for some (arbitrary) $\phi\in \text{Aut}(\mathbb{B}^n),$ and hence it is intuitive that  $S_\phi$ should be hyperrigid for every $\phi\in \text{Aut}(\mathbb{B}^n).$  We formulate it in the following result, which extends the above Kennedy-Shalit theorem to a broader range of compression tuples and quotient submodules. 

\begin{thm}\label{snhh} Let $I$ be a sufficiently nontrivial homogenous polynomial ideal of $\mathbb{C}[z].$ 
For the quotient  submodule  $(\mathcal{Q}_I,S_z)$ in $H_\lambda^2(\mathbb{B}^n), 1\leq\lambda<d,$ 
then  the following are equivalent:
\begin{enumerate}
\item $(\mathcal{Q}_I,S_z)$  is $\infty$-essentially normal.
\item  $(\mathcal{Q}_I,S_\phi)$ is $\infty$-essentially normal for every (or some) $\phi\in \text{Aut}(\mathbb{B}^n).$
\item $[S_f,S_g^\ast]\in \mathcal{L}^\infty(\mathcal{Q}_I )$ for all $f,g\in \mathcal{O}( \bar{\mathbb{B}}^n).$
\item $[S_f,S_f^\ast]\in \mathcal{L}^\infty(\mathcal{Q}_I )$ for all $f\in \mathcal{O}( \bar{\mathbb{B}}^n).$
\item  $S_z$ is hyperrigid.
\item   $S_\phi$ is hyperrigid for every (or some) $\phi\in \text{Aut}(\mathbb{B}^n).$
\end{enumerate}
\end{thm}
\begin{proof} (1)$\iff$(2): For the quotient submodule $(\mathcal{Q}_I,S_z),$ we can show  the Taylar spectrum $Sp(S_z)=Z(I)\cap\bar{\mathbb{B}}^d,$ by the the similar method used in proof of Proposition \ref{vspo}. Lemma \ref{Hoo} implies that $\phi(S_z)=S_\phi$ for every $\phi\in \text{Aut}(\mathbb{B}^n).$ Then Theorem 1 (taking $c=1,\bm{d=0}$) follows the equivalence.

(1)$\iff$(3): It suffices to prove the  implication (1)$\implies$(3). Since $Sp(S_z)\subset\bar{\mathbb{B}}^d,$ the same argument of the Taylor functional calculus as we used in the proof of Theorem \ref{th1} (taking $c=1,\bm{d=0}$) implies that (3) holds.

(3)$\iff$(4):  It comes from the Fuglede-Putnam theorem.

(1)$\iff$(5): This is \cite[Corollary 5.3]{KS16}.

(5)$\iff$(6) Suppose $\phi\in \text{Aut}(\mathbb{B}^n).$ 
Let $$\mathcal{S}_\phi=\text{span}\hspace{0.1em}\{\text{Id},S_{\phi_1},\cdots,S_{\phi_n},S_{\phi_1}^\ast,\cdots,S_{\phi_n}^\ast\}$$ 
be the operator system generated by the operator $n$-tuple $S_\phi$  for every $\phi\in \text{Aut}(\mathbb{B}^n).$ Denote $\overline{\text{A1g}}(\mathcal{S}_\phi)$ by the clsosure in the operator norm. 
We first claim that 
\begin{equation}\label{Alca} \overline{\text{A1g}}(\mathcal{S}_\phi)=\overline{\text{A1g}}(\mathcal{S}_z),\hspace{0.3em} C^\ast(\mathcal{S}_\phi)=C^\ast(\mathcal{S}_z) , \quad \forall \phi\in\text{Aut}(\mathbb{B}^n) . \end{equation}
It follows from Lemma \ref{bihg} that  $\phi=(\phi_1,\cdots,\phi_n)\in \mathcal{O}( \bar{\mathbb{B}}^n)\otimes \mathbb{C}^n,$ which means that all $\phi_i$ are holomorphic on an open ball $t\mathbb{B}^n$ with radius $t>1,1\leq i\leq n.$  Since $t\mathbb{B}^n$ are circular domain,  all $\phi_i$ can be uniformly approximated by polynomials
on $ \bar{\mathbb{B}}^n, 1\leq i\leq n.$ And hence $\mathcal{S}_\phi\subset \overline{\text{A1g}}(\mathcal{S}_z) \subset C^\ast(\mathcal{S}_z)$ by the continuity and uniqueness of the Taylor functional calculus. It follows that 
$$ \overline{\text{A1g}}(\mathcal{S}_\phi)\subset\overline{\text{A1g}}(\mathcal{S}_z),\hspace{0.3em} C^\ast(\mathcal{S}_\phi)\subset C^\ast(\mathcal{S}_z).$$
Note that  $Sp(S_z)=Z(I)\cap\bar{\mathbb{B}}^d,$  it follows from Lemma \ref{Hoo}  that $\phi(S_z)=S_\phi.$ Similarly, we can deduce that $\mathcal{S}_z=\mathcal{S}_{\phi^{-1}\circ\phi}\subset \overline{\text{A1g}}(\mathcal{S}_\phi) \subset C^\ast(\mathcal{S}_\phi).$
This implies that $$ \overline{\text{A1g}}(\mathcal{S}_z)\subset\overline{\text{A1g}}(\mathcal{S}_\phi),\hspace{0.3em} C^\ast(\mathcal{S}_z)\subset C^\ast(\mathcal{S}_\phi).$$
Then claim (\ref{Alca}) is proved. 

Now suppose  $S_z$ is hyperrigid, we prove that $S_\phi$ is hyperrigid.  
Let $\pi:C^\ast(\mathcal{S}_\phi)\rightarrow B(H)$ be a  non-degenerate representation on a Hilbert space $H.$    It follows from the identity  $C^\ast(\mathcal{S}_\phi)=C^\ast(\mathcal{S}_z)$ and the hyperrigidity of $S_z$ that there is a  unique unital completely positive extension of the restriction $\pi|_{\mathcal{S}_z}.$ Then it is clear that $\pi$ is a unital completely positive extension of the restriction $\pi|_{\mathcal{S}_\phi}.$ If $\pi'$ is another unital completely positive extension of the restriction $\pi|_{\mathcal{S}_\phi}.$ Since  $\overline{\text{A1g}}(\mathcal{S}_\phi)=\overline{\text{A1g}}(\mathcal{S}_z)$ and the the continuity of $\pi,\pi',$ it follows that $$\pi'|_{\overline{\text{A1g}}(\mathcal{S}_\phi)}=\pi|_{\overline{\text{A1g}}(\mathcal{S}_\phi)}=\pi|_{\overline{\text{A1g}}(\mathcal{S}_z)}.$$ Thus $\pi'$ is a unital completely positive extension of the restriction $\pi|_{\mathcal{S}_z}.$ The uniqueness implies that $\pi'=\pi.$ Thus ${S}_\phi$ is hyperrigid. This shows the implication (5)$\implies$(6).
Similarly, we can show that ${S}_z$ is hyperrigid if ${S}_\phi$ is hyperrigid.
\end{proof}
 We see from Theorem \ref{snhh} that operator systems $\mathcal{S_\phi},\phi\in \text{Aut}(\mathbb{B}^n)$ support Arveson's hyperrigidity conjecture for every, provided  $(\mathcal{Q}_I,S_z)$ satisfies the Arveson-Douglas (or Arveson) conjecture.

  \vspace{0.5cm}

{\noindent{\bf{Acknowledgements.}} 

The author would like to thank Prof. Kunyu Guo and  Prof. Harald Upmeier for their helpful discussions.  
The author was partially supported by the National Natural Science Foundation of China (12201571).

 \bibliographystyle{plain}
 
\end{document}